\makeatletter \theoremstyle{plain}
\newtheorem{thm}{Theorem}[section]
\numberwithin{equation}{section}
\numberwithin{figure}{section} 
\theoremstyle{plain}
\newtheorem{cor}[thm]{Corollary} 
\theoremstyle{plain}
\newtheorem{lemma}[thm]{Lemma} 
\theoremstyle{plain}
\newtheorem{tnm}[thm]{Definition} 
\theoremstyle{plain}
\begin{document}

\title{Some New Paranormed  Sequence Spaces and $\alpha-$ Core}

\author{Serkan DEMIRIZ and Celal \c CAKAN}
\subjclass[2000]{46A45, 40A05, 40C05} \keywords{Paranormed sequence
spaces, Matrix transformations, Weighted mean, Core of a sequence}
\address{(S. Demiriz): Gazi Osmanpaþa University Faculty of Arts and Science Department of Mathematics, Tokat, TURKEY \\
(C. \c Cakan): \.In\"on\"u University Faculty of Education,
44280-Malatya, TURKEY}\email{serkandemiriz@gmail.com,
ccakan@inonu.edu.tr}

\begin{abstract}
In this study, we define new paranormed sequence spaces by combining
a double sequential band matrix and a diagonal matrix. Furthermore,
we compute the $\alpha-,\beta-$ and $\gamma-$ duals and obtain bases
for these sequence spaces. Besides this, we characterize the matrix
transformations from the new paranormed sequence spaces to the
spaces $c_{0}(q),c(q),\ell(q)$ and $\ell_{\infty}(q)$. Finally,
$\alpha-core$  of a complex-valued sequence has been introduced, and
we prove some inclusion theorems related to this new type of core.
\end{abstract} \maketitle

\section{Introduction}

By $\omega$, we shall denote the space of all real valued sequences.
Any vector subspace of $\omega$ is called as a \textit{sequence
space}. We shall write $\ell_{\infty},c$ and $c_{0}$ for the spaces
of all bounded, convergent and null sequences, respectively. Also by
$bs,cs,\ell_{1}$ and $\ell_{p}$ ; we denote the spaces of all
bounded, convergent, absolutely and $p-$ absolutely convergent
series, respectively; $1< p< \infty$.

A linear topological space $X$ over the real field $\mathbb{R}$ is
said to be a paranormed space if there is a subadditive function
$g:X\rightarrow \mathbb{R}$ such that $g(\theta)=0, g(x)=g(-x)$ and
scalar multiplication is continuous,i.e.,
$|\alpha_{n}-\alpha|\rightarrow 0$ and $g(x_{n}-x)\rightarrow 0$
imply $g(\alpha_{n}x_{n}-\alpha x)\rightarrow 0$ for all $\alpha'$s
in $\mathbb{R}$ and all $x$'s in $X$, where $\theta$ is the zero
vector in the linear space $X$.

Assume here and after that  $(p_{k})$ be a bounded sequences of
strictly positive real numbers with $\sup p_{k}=H$ and $M=\max
\{1,H\}$. Then, the linear spaces $ c(p),c_{0}(p),\ell_{\infty}(p)$
and $\ell(p)$ were defined by Maddox \cite{m1,m2} (see also Simons
\cite{simons} and Nakano \cite{nakano}) as follows:

\begin{eqnarray*}
c(p)&=&\left\{x=(x_{k})\in\omega:\lim_{k\rightarrow \infty}
|x_{k}-l|^{p_{k}}=0 ~  \textrm{for some}~ l\in\mathbb{C}\right\},\\
c_{0}(p)&=&\left\{x=(x_{k})\in\omega:\lim_{k\rightarrow \infty}
|x_{k}|^{p_{k}}=0 \right\},\\
\ell_{\infty}(p)&=&\left\{x=(x_{k})\in\omega: \sup_{k\in
\mathbb{N}}|x_{k}|^{p_{k}}<\infty\right\}
\end{eqnarray*}
and
$$
\ell(p)=\bigg\{x=(x_{k})\in \omega: \sum_{k}
|x_{k}|^{p_{k}}<\infty\bigg\},
$$
which are the complete spaces paranormed by
$$
h_{1}(x)=\sup_{k\in\mathbb{N}} |x_{k}|^{p_{k}/M} \ {\rm iff} \
\inf_{p_{k}}>0 \qquad {\rm and} \qquad
h_{2}(x)=\bigg(\sum_{k}|x_{k}|^{p_{k}}\bigg)^{1/M},
$$
respectively. We shall assume throughout that
$p_{k}^{-1}+(p_{k}^{'})^{-1}=1$ provided $1<\inf p_{k}<H<\infty$.
For simplicity in notation, here and in what follows, the summation
without limits runs from $0$ to $\infty$. By $\mathcal{F}$ and
$\mathbb{N}_{k}$, we shall denote the collection of all finite
subsets of $\mathbb{N}$ and the set of all $n\in\mathbb{N}$ such
that $n\geq k$, respectively.

For the sequence spaces $X$ and $Y$, define the set $S(X,Y)$ by
\begin{equation}\label{1.1}
S(X,Y)=\{z=(z_{k}): xz=(x_{k}z_{k})\in Y \   \ {\rm for\  \ all}\ \
x\in X\}.
\end{equation}
With the notation of (\ref{1.1}), the $\alpha-,\beta-$ and $\gamma-$
duals of a sequence space $X$, which are respectively denoted by
$X^{\alpha}, X^{\beta}$ and $X^{\gamma}$, are defined by
$$
X^{\alpha}=S(X,\ell_{1}) , \     \ X^{\beta}=S(X,cs) \ {\rm and} \ \
X^{\gamma}=S(X,bs).
$$

Let $(X,h)$ be a paranormed space. A sequence $(b_{k})$ of the
elements of $X$ is called a basis for $X$ if and only if, for each
$x\in X$, there exists a unique sequence $(\alpha_{k})$ of scalars
such that
$$
h\left (x-\sum_{k=0}^{n} \alpha_{k}b_{k}\right) \rightarrow 0 \ \ as
\     \   n\rightarrow\infty.
$$
The series $\sum \alpha_{k} b_{k}$ which has the sum $x$ is then
called the expansion of $x$ with respect to $(b_{n})$ and written as
$x=\sum \alpha_{k} b_{k}$.

Let $X,Y$ be any two sequence spaces and $A=(a_{nk})$ be an infinite
matrix of real numbers $a_{nk}$,where $n,k\in \mathbb{N}$. Then, we
say that $A$ defines a matrix mapping from $X$ into $Y$, and we
denote it by writing $A:X\rightarrow Y$, if for every sequence
$x=(x_{k})\in X$ the sequence $Ax=((Ax)_{n})$, the $A$-transform of
$x$, is in $Y$, where
\begin{equation}\label{1.2}
(Ax)_{n}=\sum_{k} a_{nk}x_{k},\     \ (n\in\mathbb{N}).
\end{equation}
By $(X:Y)$, we denote the class of all matrices $A$ such that
$A:X\rightarrow Y$. Thus, $A\in(X:Y)$ if and only if the series on
the right-hand side of (1.2) converges for each $n\in \mathbb{N}$
and every $x\in X$, and we have $Ax=\{(Ax)_{n}\}_{n\in
\mathbb{N}}\in Y$ for all $x\in X$. A sequence $x$ is said to be
$A$- summable to $\alpha$ if $Ax$ converges to $\alpha$ which is
called as the $A$- limit of $x$. If $X$ and $Y$ are equipped with
the limits $X-\lim$ and $Y-\lim$, respectively, $A\in (X: Y)$ and
$Y-\lim_n A_n(x) = X-\lim_kx_k$ for all $x\in X$, then we say that
$A$ regularly maps $X$ into $Y$ and write $A\in (X: Y)_{reg}$.

Let $x=(x_k)$ be a sequence in $\mathbb{C}$, the set of all complex
numbers, and $R_k$ be the least convex closed region of complex
plane containing $x_k, x_{k+1},x_{k+2},\ldots$. The Knopp Core (or
$\mathcal{K}-core$) of $x$ is defined by the intersection of all
$R_k$ ($k$=1,2,\ldots), (see \cite{r1}, pp.137). In \cite{rs}, it is
shown that
\begin{eqnarray*}
\mathcal{K}-core(x)= \bigcap_{z\in \mathbb{C}}B_x(z)
\end{eqnarray*}
for any bounded sequence $x$, where $B_x(z) = \big\{w\in \mathbb{C}:
|w-z|\leq \limsup_k|x_k-z|\big\}$.

Let $E$ be a subset of $\mathbb{N}$. The natural density $\delta$ of
$E$ is defined by
\begin{eqnarray*}
\delta(E) = \lim_n \frac{1}{n} |\{k \leq n: k \in E\}|
\end{eqnarray*}
where $|\{k \leq n: k \in E\}|$ denotes the number of elements of
$E$ not exceeding $n$. A sequence $x=(x_k)$ is said to be
statistically convergent to a number $l$, if $\delta(\{k: |x_k -
l|\geq \varepsilon\}) = 0$ for every $\varepsilon$. In this case we
write $st-\lim x = l$, \cite{r4}. By $st$ we denote the space of all
statistically convergent sequences.

In \cite{rfo}, the notion of the statistical core (or $st-core$) of
a complex valued sequence has been introduced by Fridy and Orhan and
it is shown for a statistically bounded sequence $x$ that
\begin{eqnarray*}
st-core(x) = \bigcap_{z\in \mathbb{C}}C_x(z),
\end{eqnarray*}
where $C_x(z) = \big\{w\in \mathbb{C}: |w-z|\leq
st-\limsup_k|x_k-z|\big\}$. The core theorems have been studied by
many authors. For instance see \cite{a,cfo,cc,hcm,hc} and the
others.

We write $\mathcal{U}=\{u\in \omega: u_{k}\neq 0 \ \textrm{for all}
\ k\}$ and $\mathcal{U}^{+}=\{u\in \omega: u_{k}>0 \ \textrm{for
all} \ k\}$; if $u\in \mathcal{U}$  then we write $1/u=(1/u_{k})$
where  $k\in \mathbb{N}$. By $e$ and $e^{(n)}$ $(n=0,1,2,...)$, we
denote the sequences such that $e_{k}=1$ for $k=0,1,...$, and
$e_{n}^{(n)}=1$ and $e_{k}^{(n)}=0$ for $k\neq n$.

An infinite matrix $T=(t_{nk})$ is said to be a triangle if
$t_{nk}=0 (k>n)$ and $t_{nn}\neq 0$ for all $n$. Let us give the
definition of some triangle limitation matrices which are needed in
the text. Let $t=(t_{k})$ be a sequence of positive reals and write
$$
Q_{n}=\sum_{k=0}^{n} t_{k}, \quad (n\in \mathbb{N}).
$$
Then the Ces\`{a}ro mean of order one, Riesz mean with respect to
the sequence $t=(t_{k})$ and $A_{r}-$ mean with $0<r<1$ are
respectively defined by the matrices $C=(c_{nk})$,
$R^{t}=(r_{nk}^{t})$ and $B(r,s)=\{b_{nk}(r,s)\}$; where
$$
c_{nk}=\left\{\begin{array}{ll}
  \displaystyle \frac{1}{n+1}, & (0 \leq k \leq n),\\
  0, & (k>n), \\
\end{array}\right.
\quad r_{nk}^{t}=\left\{\begin{array}{ll}
  \displaystyle \frac{t_{k}}{Q_{n}}, & (0 \leq k \leq n),\\
  0, & (k>n), \\
\end{array}\right.
$$
and
$$
b_{nk}(r,s)=\left\{\begin{array}{ll}
  \displaystyle r, & (k=n),\\
  \displaystyle s, & (k=n-1), \\
  \displaystyle 0, & \textrm{otherwise}
\end{array}\right.
$$
for all $k,n \in \mathbb{N}$ and $r,s\in \mathbb{R}\backslash
\{0\}$. Additionally, define the summation $S=(s_{nk})$ and the
difference matrix $\Delta^{(1)}=(\delta_{nk})$ and the double
sequential band matrix
$\widetilde{B}=B(\tilde{r},\tilde{s})=\{b_{nk}(\tilde{r},\tilde{s})\}$
by
$$
s_{nk}=\left\{\begin{array}{ll}
  \displaystyle 1, & (0 \leq k \leq n),\\
  0, & (k>n), \\
\end{array}\right.
\quad \textrm {and}\quad \delta_{nk}=\left\{\begin{array}{ll}
  \displaystyle (-1)^{n-k}, & (n-1 \leq k \leq n),\\
  0, & (0\leq k<n-1\  \textrm {or}\ k>n), \\
\end{array}\right.
$$
and
$$
b_{nk}(\tilde{r},\tilde{s})=\left\{\begin{array}{ll}
  \displaystyle r_{k}, & (k=n),\\
  \displaystyle s_{k}, & (k=n-1), \\
  \displaystyle 0,& \textrm{otherwise}
\end{array}\right.
$$
for all $k,n \in \mathbb{N}$.

Defining the diagonal matrix $D=(d_{nk})$ by $d_{nn}=1/\alpha_{n}$
for $n=0,1,...$ and $\alpha=(\alpha_{n})\in \mathcal{U}^{+}$ and
putting $\widetilde{T}=DB(\tilde{r},\tilde{s})$.

The main purpose of this study is to introduce the paranormed
sequence spaces
$s_{\alpha}^{0}(\widetilde{B},p),s_{\alpha}^{(c)}(\widetilde{B},p),$
$s_{\alpha}^{(\infty)}(\widetilde{B},p)$ and
$\ell_{\alpha}(\widetilde{B},p)$ which are the set of all sequences
whose $\widetilde{T}-$transforms are in the spaces
$c_{0}(p),c(p),\ell_{\infty}(p)$ and $\ell(p)$, respectively; where
$\widetilde{T}$ denotes the matrix
$\widetilde{T}=DB(\tilde{r},\tilde{s})=\{t_{nk}(\tilde{r},\tilde{s},\alpha)\}$
defined by
$$
t_{nk}(\tilde{r},\tilde{s},\alpha)=\left\{\begin{array}{ll}
  \displaystyle r_{n}/ \alpha_{n}, & (k=n), \\
  \displaystyle s_{n-1}/ \alpha_{n}, & (k=n-1), \\
  \displaystyle 0 & \textrm{otherwise}.
  \end{array}\right.
$$
Also, we have investigated some topological structures, which have
completeness, the $\alpha-,\beta-$ and $\gamma-$ duals, and the
bases of these sequence spaces. Besides this, we characterize some
matrix mappings on these spaces. Finally,  we have defined $\alpha-$
core of a sequence and characterized some class of matrices for
which $\alpha-core (Ax)\subseteq \mathcal{K}-core (x)$ and
$\alpha-core (Ax)\subseteq st_{A}-core(x)$ for all $x\in
\ell_{\infty}$.

\section {The Paranormed Sequence Spaces $\lambda(\widetilde{B},p)$ for $\lambda\in
\{s_{\alpha}^{0},s_{\alpha}^{(c)},s_{\alpha}^{(\infty)},\ell_{\alpha}\}$}

In this section, we define the new  sequence spaces
$\lambda(\widetilde{B},p)$ for $\lambda\in
\{s_{\alpha}^{0},s_{\alpha}^{(c)},s_{\alpha}^{(\infty)},\ell_{\alpha}\}$
derived by using the double sequential band matrix and the diagonal
matrix, and prove that these sequence spaces are the complete
paranormed linear metric spaces and compute their $\alpha-,\beta-$
and $\gamma-$ duals. Moreover, we give the basis for the spaces
$\lambda(\widetilde{B},p)$ for $\lambda\in
\{s_{\alpha}^{0},s_{\alpha}^{(c)},\ell_{\alpha}\}$.

For a sequence space $X$, the matrix domain $X_{A}$ of an infinite
matrix $A$ is defined by
\begin{equation}\label{2.1}
X_{A}=\{x=(x_{k})\in\omega:\  \ Ax\in X\}.
\end{equation}

In \cite{cm}, Choudhary and Mishra have defined the sequence space
$\overline{\ell(p)}$ which consists of all sequences such that
$S$-transforms are in $\ell(p)$, where $S=(s_{nk})$ is defined by
$$
s_{nk}=\left\{\begin{array}{cc}
  \displaystyle 1, & (0 \leq k \leq n), \\
  0, & (k>n).
\end{array}\right.
$$
Ba\c{s}ar and Altay \cite{fbba1} have recently examined the space
$bs(p)$ which is formerly defined by Ba\c{s}ar in \cite{fb} as the
set of all series whose sequences of partial sums are in
$\ell_{\infty}(p)$. More recently, Altay and Ba\c{s}ar have studied
the sequence spaces $r^{t}(p),r_{\infty}^{t}(p)$ in \cite{bafb1} and
$r_{c}^{t}(p),r_{0}^{t}(p)$ in \cite{bafb2} which are derived by the
Riesz means from the sequence spaces $\ell(p),\ell_{\infty}(p),c(p)$
and $c_{0}(p)$ of Maddox, respectively. With the notation of (2.1),
the spaces
$\overline{\ell(p)},bs(p),r^{t}(p),r_{\infty}^{t}(p),r_{c}^{t}(p)$
and $r_{0}^{t}(p)$ may be redefined by

$$
\overline{\ell(p)}=[\ell(p)]_{S},\   \
bs(p)=[\ell_{\infty}(p)]_{S},\   \ r^{t}(p)=[\ell(p)]_{R^{t}},
$$

$$
r_{\infty}^{t}(p)=[\ell_{\infty}(p)]_{R^{t}},\   \
r_{c}^{t}(p)=[c(p)]_{R^{t}},\   \ r_{0}^{t}(p)=[c_{0}(p)]_{R^{t}}.\\
$$
It is well known that paranormed spaces have more general properties
than normed spaces. In the literature, the approach of constructing
a new sequence space on the paranormed space by means of the matrix
domain of a particular limitation method has recently been employed
by several authors, e.g., Ye\c{s}ilkayagil and Ba\c{s}ar
\cite{myfb1}, Nergiz and Ba\c{s}ar \cite{hnfb1,hnfb2}, Karakaya and
Polat \cite{vkhp}, Özger and Ba\c{s}ar \cite{föfb}.

The domain of the matrix $B(r,s)$ in the classical spaces
$\ell_{\infty},c_{0}$ and $c$ has recently been studied by
Kiri\c{s}çi and Ba\c{s}ar in \cite{mkfb}. The characterizations of
compact matrix operators between some of those spaces were given by
Djolovi\'{c} in \cite{dj}. Recently difference sequence spaces have
extensively been studied, for instance in
\cite{hk,rcme,mc,eekmb1,mbeek1,mbeek2}.

The sequence spaces $s_{\alpha}^{0},s_{\alpha}^{(c)},
s_{\alpha}^{(\infty)}$ and $\ell_{\alpha}$, where
$\alpha=(\alpha_{n})\in \mathcal{U}^{+}$ were introduced by de
Malafosse and Rako\v{c}evi\'{c} in \cite{bdmem1,bdmem2} as follows:

\begin{eqnarray*}
s_{\alpha}^{0}&=&\bigg\{x=(x_{k})\in \omega: \lim_{k\rightarrow
\infty} \frac{x_{k}}{\alpha_{k}}=0\bigg\}\\
s_{\alpha}^{(c)}&=& \bigg\{x=(x_{k})\in \omega: \exists l\in
\mathbb{C}\ni \lim_{k\rightarrow \infty}
\bigg(\frac{x_{k}}{\alpha_{k}}-l\bigg)=0\bigg\}\\
s_{\alpha}^{(\infty)}&=&\bigg\{x=(x_{k})\in \omega:
\sup_{k\in \mathbb{N}} \bigg|\frac{x_{k}}{\alpha_{k}}\bigg|<\infty\bigg\}\\
\ell_{\alpha}&=&\bigg\{x=(x_{k})\in \omega: \sum_{k} \bigg|\frac{x_{k}}{\alpha_{k}}\bigg|^{p}<\infty\bigg\}\\
\end{eqnarray*}
These sequence spaces have extensively been examined by B de
Malafosse in \cite{bdm1,bdm2,bdm3,bdm4}.

Following Choudhary and Mishra \cite{cm}, Ba\c{s}ar and Altay
\cite{fbba1}, Altay and Ba\c{s}ar \cite{bafb1,bafb2}, we define the
sequence spaces $\lambda(\widetilde{B},p)$ for $\lambda\in
\{s_{\alpha}^{0},s_{\alpha}^{(c)},s_{\alpha}^{(\infty)},\ell_{\alpha}\}$
by

$$
\lambda(\widetilde{B},p)=\bigg\{x=(x_{k})\in \omega:
\bigg(\frac{r_{k}x_{k}+s_{k-1}x_{k-1}}{\alpha_{k}}\bigg)\in
\mu(p)\bigg\}.
$$
for all $k\in \mathbb{N}$ and $\mu \in
\{c_{0},c,\ell_{\infty},\ell\}$. With the notation (\ref{2.1}), we
may redefine the spaces
$s_{\alpha}^{0}(\widetilde{B},p),s_{\alpha}^{(c)}(\widetilde{B},p),$
$s_{\alpha}^{(\infty)}(\widetilde{B},p)$ and
$\ell_{\alpha}(\widetilde{B},p)$ as follows:
$$
s_{\alpha}^{0}(\widetilde{B},p)=\{c_{0}(p)\}_{\widetilde{T}}, \quad
s_{\alpha}^{(c)}(\widetilde{B},p)=\{c(p)\}_{\widetilde{T}},
$$
$$
s_{\alpha}^{(\infty)}(\widetilde{B},p)=\{\ell_{\infty}(p)\}_{\widetilde{T}},
\quad \ell_{\alpha}(\widetilde{B},p)=\{\ell(p)\}_{\widetilde{T}}.
$$
In the case $p=e$, the sequence space $\lambda(\widetilde{B},p)$ is
reduced to $\lambda(\widetilde{B})$ which is introduced by E.
Malkowsky et all. \cite{emföaa} for $\lambda \in \{s_{\alpha}^{0},
s_{\alpha}^{(c)}, s_{\alpha}^{(\infty)}\}$. On the other hand, it is
clear that $\Delta$ can be obtained as a special case of
$B(\tilde{r},\tilde{s})$ for $\tilde{r}=e$ and $\tilde{s}=-e$ and it
is also trivial that $B(\tilde{r},\tilde{s})$ reduces to $B(r,s)$ in
the special case $\tilde{r}=re$ and $\tilde{s}=se$. So, the results
related to the domain of the matrix $B(\tilde{r},\tilde{s})$ are
more general and more comprehensive than the corresponding ones of
the domains of the matrices $\Delta$ and $B(r,s)$.

Define the sequence $y=(y_{n})$, which will be frequently used as
the $\widetilde{T}=DB(\tilde{r},\tilde{s})-$transform of a sequence
$x=(x_{n})$, i.e.
\begin{equation}\label{2.2}
y_{n}=\widetilde{T}_{n}x=\frac{r_{n}x_{n}+s_{n-1}x_{n-1}}{\alpha_{n}};
\quad (n\in \mathbb{N}).
\end{equation}
Since the proof may also be obtained in the similar way as for the
other spaces, to avoid the repetition of the similar statements, we
give the proof only for one of those spaces. Now, we may begin with
the following theorem which is essential in the study.

\begin{thm}\label{t2.1}
(i) The sequence spaces $\lambda(\widetilde{B},p)$ for $\lambda \in
\{s_{\alpha}^{0}, s_{\alpha}^{(c)}, s_{\alpha}^{(\infty)}\}$ are the
complete linear metric spaces paranormed by $g$, defined by
$$
g(x)=\sup_{k\in \mathbb{N}}
\bigg|\frac{r_{k}x_{k}+s_{k-1}x_{k-1}}{\alpha_{k}} \bigg|^{p_{k}/M}.
$$
$g$ is a paranorm for the spaces $s_{\alpha}^{(c)}(\widetilde{B},p)$
and $s_{\alpha}^{(\infty)}(\widetilde{B},p)$ only in the trivial
case $\inf
p_{k}>0$ .\\
(ii) $\ell_{\alpha}(\widetilde{B},p)$ is a complete linear metric
space paranormed by
$$
g^{*}(x)=\bigg(\sum_{k}
\bigg|\frac{r_{k}x_{k}+s_{k-1}x_{k-1}}{\alpha_{k}}
\bigg|^{p_{k}}\bigg)^{1/M}.
$$
\end{thm}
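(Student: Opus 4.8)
The plan is to exploit the fact that each space $\lambda(\widetilde{B},p)$ is by definition the matrix domain $\{\mu(p)\}_{\widetilde{T}}$ of the triangle $\widetilde{T}=DB(\tilde r,\tilde s)$, whose diagonal entries $r_n/\alpha_n$ are nonzero; hence $\widetilde{T}$ is invertible and the linear map $x\mapsto y=\widetilde{T}x$ of (\ref{2.2}) is a bijection of $\lambda(\widetilde{B},p)$ onto the Maddox space $\mu(p)$ satisfying $g(x)=h_1(\widetilde{T}x)$ in part (i) and $g^{*}(x)=h_2(\widetilde{T}x)$ in part (ii). This single observation reduces completeness and linearity to the corresponding known properties of $\mu(p)$, so the substance of the proof is the verification of the paranorm axioms together with the transfer of completeness through the bijection.

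I would first check the algebraic axioms. That $g(\theta)=0$ and $g(-x)=g(x)$ is immediate from the definition. For subadditivity I would invoke the elementary inequality $|a+b|^{p_k/M}\le |a|^{p_k/M}+|b|^{p_k/M}$, which holds because $0<p_k/M\le 1$ for every $k$; applying it termwise to $\widetilde{T}_k(x+z)=\widetilde{T}_kx+\widetilde{T}_kz$ and taking the supremum yields $g(x+z)\le g(x)+g(z)$. Linearity of the space then follows, since $\mu(p)$ is linear and $\widetilde{T}$ is linear.

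The heart of the matter is the continuity of scalar multiplication, and this is exactly where the stated dichotomy enters. Writing $\beta_n x^{(n)}-\beta x=\beta_n(x^{(n)}-x)+(\beta_n-\beta)x$ and using subadditivity, I would bound the first summand by $K\,g(x^{(n)}-x)$, where $K=\sup_n\max\{1,|\beta_n|^{H/M}\}<\infty$ since $(\beta_n)$ converges; this tends to $0$. The delicate term is $g\big((\beta_n-\beta)x\big)=\sup_k|\beta_n-\beta|^{p_k/M}\,|\widetilde{T}_kx|^{p_k/M}$. For $s_\alpha^0(\widetilde{B},p)$ the quantity $|\widetilde{T}_kx|^{p_k/M}$ tends to $0$, so splitting the supremum into a finite initial block (on which $|\beta_n-\beta|^{p_k/M}\to0$ for each fixed $k$) and a tail (which is uniformly small and on which $|\beta_n-\beta|^{p_k/M}\le 1$ once $|\beta_n-\beta|\le 1$) forces $g\big((\beta_n-\beta)x\big)\to 0$ with no restriction on $(p_k)$. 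For $s_\alpha^{(c)}(\widetilde{B},p)$ and $s_\alpha^{(\infty)}(\widetilde{B},p)$ the transform is merely bounded, the tail need not be small, and when $\inf p_k=0$ the factor $|\beta_n-\beta|^{p_k/M}$ can stay close to $1$ along a subsequence of indices where $p_k\to0$; continuity then fails, so one must assume $\inf p_k>0$, which is the obstruction recorded in the statement.

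Finally, for completeness I would take a $g$-Cauchy sequence $(x^{(i)})$ in $\lambda(\widetilde{B},p)$. Because $g(x^{(i)}-x^{(j)})=h_1(\widetilde{T}x^{(i)}-\widetilde{T}x^{(j)})$, the transforms $(\widetilde{T}x^{(i)})$ form a Cauchy sequence in the complete space $\mu(p)$ and hence converge to some $y\in\mu(p)$; setting $x=\widetilde{T}^{-1}y$ gives $\widetilde{T}x=y\in\mu(p)$, so $x\in\lambda(\widetilde{B},p)$, and $g(x^{(i)}-x)=h_1(\widetilde{T}x^{(i)}-y)\to0$. Part (ii) follows the same scheme with $\ell(p)$, $h_2$ and $g^{*}$ in place of their counterparts, the only change being that subadditivity of $g^{*}$ rests on the Minkowski-type inequality $\big(\sum_k|a_k+b_k|^{p_k}\big)^{1/M}\le\big(\sum_k|a_k|^{p_k}\big)^{1/M}+\big(\sum_k|b_k|^{p_k}\big)^{1/M}$ valid in $\ell(p)$. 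I expect the scalar-multiplication step, and specifically the control of the $(\beta_n-\beta)x$ term, to be the main obstacle.
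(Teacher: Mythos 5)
Your proof is correct, and it diverges from the paper's own argument at two points worth recording. For completeness, the paper runs the Cauchy argument from scratch inside $s_{\alpha}^{(0)}(\widetilde{B},p)$: it extracts pointwise limits of the transforms $\{\widetilde{T}x^{i}\}_{k}$, shows the limit transform lies in $c_{0}(p)$, and concludes; you instead note that $x\mapsto\widetilde{T}x$ is a paranorm-preserving linear bijection onto $\mu(p)$ (the triangle $\widetilde{T}$ has nonzero diagonal entries $r_{n}/\alpha_{n}$) and transfer completeness wholesale from the Maddox spaces. This is shorter, and it also sidesteps a small slip in the printed version, which asserts $|\{\widetilde{T}x^{i}\}_{k}|^{p_{k}/M}<\varepsilon/2$ for \emph{all} $k$ merely from $x^{i}\in s_{\alpha}^{(0)}(\widetilde{B},p)$, whereas membership in $c_{0}(p)$ gives this only for $k$ large. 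For continuity of scalar multiplication, the paper uses the bound $g(\beta_{n}x^{n}-\beta x)\leq|\beta_{n}-\beta|\,g(x^{n})+|\beta|\,g(x^{n}-x)$, which treats $g$ as absolutely homogeneous; since in fact $g\big((\beta_{n}-\beta)x\big)=\sup_{k}|\beta_{n}-\beta|^{p_{k}/M}|\widetilde{T}_{k}x|^{p_{k}/M}$, that inequality is not literally valid, and its failure occurs precisely in the regime the theorem's caveat addresses. Your finite-block/tail splitting is the rigorous repair: it yields continuity for the $c_{0}(p)$-type space with no restriction on $(p_{k})$, and simultaneously explains why $s_{\alpha}^{(c)}(\widetilde{B},p)$ and $s_{\alpha}^{(\infty)}(\widetilde{B},p)$ require $\inf p_{k}>0$ --- a clause the paper states but never argues. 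In short, the skeleton is the same (verify the paranorm axioms, then prove completeness), but your transfer of completeness through the bijection and your treatment of the scalar term $(\beta_{n}-\beta)x$ are genuinely different from, and tighter than, the paper's proof.
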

\begin{proof}
We prove the theorem for the space
$s_{\alpha}^{(0)}(\widetilde{B},p)$. The linearity of
$s_{\alpha}^{(0)}(\widetilde{B},p)$ with respect to the
coordinatewise addition and scalar multiplication follows from the
following inequalities which are satisfied for $x,z\in
s_{\alpha}^{(0)}(\widetilde{B},p)$ (see \cite [p.30]{m}):
\begin{eqnarray}\label{2.3}
\sup_{k\in \mathbb{N}} \bigg|
\frac{r_{k}(x_{k}+z_{k})+s_{k-1}(x_{k-1}+z_{k-1})}{\alpha_{k}}\bigg|^{p_{k}/M}
&\leq&\sup_{k\in \mathbb{N}}
\bigg|\frac{r_{k}x_{k}+s_{k-1}x_{k-1}}{\alpha_{k}}
\bigg|^{p_{k}/M}\nonumber\\
&+&\sup_{k\in \mathbb{N}}
\bigg|\frac{r_{k}z_{k}+s_{k-1}z_{k-1}}{\alpha_{k}} \bigg|^{p_{k}/M}
\end{eqnarray}
and for any $\beta \in \mathbb{R}$ (see \cite{m2}),
\begin{equation}\label{2.4}
|\beta|^{p_{k}}\leq \max \{1,|\beta|^{M}\}.
\end{equation}
It is clear that $g(\theta)=0$ and $g(x)=g(-x)$ for all $x\in
s_{\alpha}^{(0)}(\widetilde{B},p)$. Again the inequalities
(\ref{2.3}) and (\ref{2.4}) yield the subadditivity of $g$ and
$$
g(\beta x)\leq \max \{1,|\beta|\}g(x).
$$
Let $\{x^{n}\}$ be any sequence of the points $x^{n} \in
s_{\alpha}^{(0)}(\widetilde{B},p)$ such that $g(x^{n}-x)\rightarrow
0$ and $(\beta_{n})$ also be any sequence of scalars such that
$\beta_{n}\rightarrow \beta$. Then, since the inequality
$$
g(x^{n})\leq g(x)+g(x^{n}-x)
$$
holds by the subadditivity of $g$, $\{g(x^{n})\}$ is bounded and we
thus have
\begin{eqnarray*}
g(\beta_{n} x^{n}-\beta x)&=& \sup_{k\in \mathbb{N}}\left
|\frac{r_{k}(\beta_{n}x_{k}^{n}-\beta x_{k})-s_{k-1}(\beta_{n}x_{k-1}^{n}-\beta x_{k-1})}
{\alpha_{k}}\right |^{p_{k}/M}\\
&\leq& |\beta_{n}-\beta|\ \ g(x^{n})+ |\beta|\ \ g(x^{n}-x),
\end{eqnarray*}
which tends to zero as $n\rightarrow \infty$. That is to say that
the scalar multiplication is continuous. Hence, $g$ is a paranorm on
the space $s_{\alpha}^{(0)}(\widetilde{B},p)$.

It remains to prove the completeness of the space $
s_{\alpha}^{(0)}(\widetilde{B},p)$. Let $\{x^{i}\}$ be any Cauchy
sequence in the space $s_{\alpha}^{(0)}(\widetilde{B},p)$, where
$x^{i}=\{x_{0}^{(i)}, x_{1}^{(i)}, x_{2}^{(i)},...\}$. Then, for a
given $\varepsilon>0$ there exists a positive integer
$n_{0}(\varepsilon)$ such that
$$
g(x^{i}-x^{j})< \frac{\varepsilon}{2}
$$
for all $i,j\geq n_{0}(\varepsilon)$. We obtain by using definition
of $g$ for each fixed $k\in \mathbb{N}$ that
\begin{eqnarray}\label{2.5}
\big|\{\widetilde{T}x^{i}\}_{k}-\{\widetilde{T}x^{j}\}_{k}\big|^{p_{k}/M}
&\leq& \sup_{k\in \mathbb{N}}
\big|\{\widetilde{T}x^{i}\}_{k}-\{\widetilde{T}x^{j}\}_{k}\big|^{p_{k}/M}\nonumber\\
&<&\frac{\varepsilon}{2}
\end{eqnarray}
for every $i,j\geq n_{0}(\varepsilon)$, which leads us to the fact
that $\{(\widetilde{T}x^{0})_{k},(\widetilde{T}x^{1})_{k},...\}$ is
a Cauchy sequence of real numbers for every fixed $k\in \mathbb{N}$.
Since $\mathbb{R}$ is complete, it converges, say
$$
\{\widetilde{T}x^{i}\}_{k}\rightarrow \{\widetilde{T}x\}_{k}
$$
as $i\rightarrow \infty$. Using these infinitely many limits
$(\widetilde{T}x)_{0},(\widetilde{T}x)_{1},...$, we define the
sequence $\{(\widetilde{T}x)_{0},(\widetilde{T}x)_{1},...\}$. We
have from (\ref{2.5}) with $j\rightarrow \infty$ that
\begin{equation}\label{2.6}
\big|\{\widetilde{T}x^{i}\}_{k}-\{\widetilde{T}x\}_{k}\big|^{p_{k}/M}\leq
\frac{\varepsilon}{2} \quad (i\geq n_{0}(\varepsilon))
\end{equation}
for every fixed $k\in \mathbb{N}$. Since $x^{i}=\{x_{k}^{(i)}\}\in
s_{\alpha}^{(0)}(\widetilde{B},p)$,
$$
\big|\{\widetilde{T}x^{i}\}_{k}\big|^{p_{k}/M}<\frac{\varepsilon}{2}
$$
for all $k\in \mathbb{N}$. Therefore, we obtain (\ref{2.6}) that
\begin{eqnarray*}
\big|\{\widetilde{T}x\}_{k}\big|^{p_{k}/M}&\leq&
\big|\{\widetilde{T}x\}_{k}-\{\widetilde{T}x^{i}\}_{k}\big|^{p_{k}/M}
+\big|\{\widetilde{T}x^{i}\}_{k}\big|^{p_{k}/M}\\
&<&\varepsilon  \quad (i\geq n_{0}(\varepsilon)).
\end{eqnarray*}
This shows that the sequence $\{\widetilde{T}x\}$ belongs to  the
space $c_{0}(p)$. Since $\{x^{i}\}$ was an arbitrary Cauchy
sequence, the space $s_{\alpha}^{(0)}(\widetilde{B},p)$ is complete
and this concludes the proof.
\end{proof}

\begin{thm}\label{t2.2}
The sequence spaces
$s_{\alpha}^{(\infty)}(\widetilde{B},p),s_{\alpha}^{(c)}(\widetilde{B},p),s_{\alpha}^{0}(\widetilde{B},p)$
and $\ell_{\alpha}(\widetilde{B},p)$ are linearly isomorphic to the
spaces $\ell_{\infty}(p),c(p),c_{0}(p)$ and $\ell(p)$, respectively,
where $0<p_{k}\leq H<\infty$.
\end{thm}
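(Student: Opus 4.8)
The plan is to exhibit, for each pair $(\lambda,\mu)$, an explicit linear bijection between $\lambda(\widetilde{B},p)$ and $\mu(p)$ and to verify that it respects the respective paranorms. Following the convention adopted in the preceding proof, I would carry out the representative case $\lambda=s_{\alpha}^{0}$, $\mu=c_{0}$ in full, the remaining three cases being identical word for word once $c_{0}(p)$ is replaced by $c(p)$, $\ell_{\infty}(p)$ or $\ell(p)$. First I would define the map $T:s_{\alpha}^{0}(\widetilde{B},p)\to c_{0}(p)$ by $Tx=y=\widetilde{T}x$, where $y$ is the transform (\ref{2.2}). By the very definition $s_{\alpha}^{0}(\widetilde{B},p)=\{c_{0}(p)\}_{\widetilde{T}}$, the image $Tx$ lies in $c_{0}(p)$ for every $x$ in the domain, so $T$ is well defined; its linearity is immediate from the linearity of the matrix transformation (\ref{1.2}).

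Injectivity and surjectivity both rest on the fact that $\widetilde{T}=DB(\tilde{r},\tilde{s})$ is a triangle (its diagonal entries are $r_{n}/\alpha_{n}\neq0$). For injectivity, $Tx=\theta$ forces $y_{n}=0$ for all $n$; reading the relation $\alpha_{n}y_{n}=r_{n}x_{n}+s_{n-1}x_{n-1}$ from $n=0$ upward gives $x_{0}=0$ and then $x_{n}=0$ by induction, so $\ker T=\{\theta\}$. For surjectivity, given any $y\in c_{0}(p)$ I would define $x=(x_{n})$ by solving the same relation recursively,
$$
x_{n}=\frac{1}{r_{n}}\Big(\alpha_{n}y_{n}-s_{n-1}x_{n-1}\Big),\qquad x_{0}=\frac{\alpha_{0}y_{0}}{r_{0}},
$$
which unrolls to the closed form
$$
x_{n}=\sum_{k=0}^{n}(-1)^{n-k}\Bigg(\prod_{j=k+1}^{n}\frac{s_{j-1}}{r_{j}}\Bigg)\frac{\alpha_{k}y_{k}}{r_{k}}.
$$
Each $x_{n}$ is a finite combination of $y_{0},\dots,y_{n}$, so no convergence question arises at this stage. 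By construction $\widetilde{T}x=y$, and since $y\in c_{0}(p)$ this says precisely that $x\in s_{\alpha}^{0}(\widetilde{B},p)$ with $Tx=y$; hence $T$ is onto.

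Finally I would observe that $T$ transports the paranorm isometrically: comparing the formula for $g$ in Theorem~\ref{t2.1} with the paranorm $h_{1}$ of $c_{0}(p)$ gives $g(x)=h_{1}(\widetilde{T}x)=h_{1}(Tx)$, and analogously $g^{*}(x)=h_{2}(Tx)$ in the $\ell_{\alpha}$ case. Thus $T$ is a paranorm-preserving linear bijection, which establishes the asserted linear isomorphism. The only genuinely computational ingredient is the explicit inversion of $\widetilde{T}$ displayed above; but because $\widetilde{T}$ is triangular the defining recursion terminates after finitely many steps in each coordinate, so this step is routine bookkeeping rather than a real obstacle, and no analytic difficulty has to be overcome.
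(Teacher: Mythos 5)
Your proposal is correct and takes essentially the same route as the paper: both define $T$ as the $\widetilde{T}$-transform of (\ref{2.2}), invert the triangle by the same closed formula (your explicit inverse coincides with the paper's (\ref{2.7}), where you also supply the factor $y_{j}$ that is typographically missing there), and conclude that $T$ is a paranorm-preserving linear bijection. The only differences are cosmetic — you carry out the representative case $s_{\alpha}^{0}(\widetilde{B},p)\cong c_{0}(p)$ while the paper treats $s_{\alpha}^{(\infty)}(\widetilde{B},p)\cong\ell_{\infty}(p)$, and you spell out the injectivity induction and the recursion behind the inverse, both of which the paper asserts without detail.
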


\begin{proof}
We establish this for the space
$s_{\alpha}^{(\infty)}(\widetilde{B},p)$. To prove the theorem, we
should show the existence of a linear bijection between the spaces
$s_{\alpha}^{(\infty)}(\widetilde{B},p)$ and $\ell_{\infty}(p)$ for
$0<p_{k}\leq H<\infty$. With the notation of (\ref{2.2}), define the
transformations $T$ from $s_{\alpha}^{(\infty)}(\widetilde{B},p)$ to
$\ell_{\infty}(p)$ by $x\mapsto y=Tx$. The linearity of $T$ is
trivial. Further, it is obvious that $x=\theta$ whenever $Tx=\theta$
and hence $T$ is injective.

Let $y=(y_{k})\in \ell_{\infty}(p)$ and define the sequence
$x=(x_{k})$ by
\begin{equation}\label{2.7}
x_{k}=\sum_{j=0}^{k}
\frac{(-1)^{k-j}\alpha_{j}}{r_{k}}\prod_{i=j}^{k-1}
\frac{s_{i}}{r_{i}}; \quad (k\in \mathbb{N}).
\end{equation}
Then, we get that
\begin{eqnarray*}
g(x)&=&\sup_{k\in \mathbb{N}}
\bigg|\frac{r_{k}x_{k}+s_{k-1}x_{k-1}}{\alpha_{k}} \bigg|^{p_{k}/M}\\
&=&\sup_{k\in \mathbb{N}} \bigg|\frac{r_{k}\sum_{j=0}^{k}
\frac{(-1)^{k-j}\alpha_{j}}{r_{k}}\prod_{i=j}^{k-1}
\frac{s_{i}}{r_{i}}+s_{k-1}\sum_{j=0}^{k-1}
\frac{(-1)^{k-1-j}\alpha_{j}}{r_{k-1}}\prod_{i=j}^{k-2}
\frac{s_{i}}{r_{i}}}{\alpha_{k}} \bigg|^{p_{k}/M}\\
&=& \sup_{k\in \mathbb{N}}|y_{k}|^{p_{k}/M}=h_{1}(y)<\infty.
\end{eqnarray*}
Thus, we deduce that $x\in s_{\alpha}^{(\infty)}(\widetilde{B},p)$
and consequently $T$ is surjective and is paranorm preserving.
Hence, $T$ is a linear bijection and this says us that the spaces
$s_{\alpha}^{(\infty)}(\widetilde{B},p)$ and $\ell_{\infty}(p)$ are
linearly isomorphic, as desired.
\end{proof}

We shall quote some lemmas which are needed in proving related to
the duals our theorems.

\begin{lemma}\label{l2.3}\cite [Theorem 5.1.1 with $q_n=1$]{kgge}
$A\in (c_{0}(p):\ell(q))$  if and only if
\begin{equation}
\sup_{K \in \mathcal{F}} \sum_{n}\left|\sum_{k\in K}
a_{nk}B^{-1/p_{k}}\right|<\infty, \    \ (\exists B\in
\mathbb{N}_{2}).
\end{equation}
\end{lemma}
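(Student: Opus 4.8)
The plan is to read the hypothesis with $q_n=1$, so that $\ell(q)=\ell_1$ and the assertion reduces to characterizing $A\in(c_0(p):\ell_1)$. The whole argument rests on a single scaling reduction: for a fixed integer $B\ge 2$ (an element of $\mathbb{N}_2$) introduce the auxiliary matrix $\widehat{A}=(\widehat{a}_{nk})$ with $\widehat{a}_{nk}=a_{nk}B^{-1/p_k}$. The displayed condition
$$
\sup_{K\in\mathcal{F}}\sum_{n}\Big|\sum_{k\in K}\widehat{a}_{nk}\Big|<\infty
$$
is exactly the classical criterion characterizing $\widehat{A}\in(\ell_\infty:\ell_1)$, which I would quote as a known result. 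Thus the task is to transfer membership back and forth between $c_0(p)$, whose elements are controlled by the weights $B^{-1/p_k}$, and $\ell_\infty$.

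For sufficiency, assume the bound above holds for some integer $B\ge 2$ and take $x\in c_0(p)$. Since $|x_k|^{p_k}\to 0$, there are only finitely many $k$ with $|x_k|^{p_k}>1/B$, so $|x_k|\le B^{-1/p_k}$ for all large $k$; hence $c_k:=x_kB^{1/p_k}$ defines a bounded sequence $c\in\ell_\infty$ (the finitely many exceptional terms are harmless). Writing
$$
(Ax)_n=\sum_k a_{nk}x_k=\sum_k \widehat{a}_{nk}c_k=(\widehat{A}c)_n,
$$
the hypothesis gives $\widehat{A}\in(\ell_\infty:\ell_1)$, whence $Ax=\widehat{A}c\in\ell_1$. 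Note also that finiteness of the supremum forces each row $(\widehat{a}_{nk})_k$ into $\ell_1$, which guarantees convergence of the series defining each $(Ax)_n$. Therefore $A\in(c_0(p):\ell_1)$.

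For necessity, which I expect to be the main obstacle, assume $A\in(c_0(p):\ell_1)$ and argue by contradiction, supposing that for every integer $B\ge 2$ the quantity $\sup_{K\in\mathcal{F}}\sum_n|\sum_{k\in K}a_{nk}B^{-1/p_k}|$ is infinite. One then manufactures a single $x\in c_0(p)$ with $Ax\notin\ell_1$ by a gliding-hump construction: choose scales $B_j\to\infty$ and pairwise disjoint finite blocks $K_j$ of indices, and on $K_j$ place a signed multiple of $B_j^{-1/p_k}$ so that the contribution of $K_j$ to $\sum_n|(Ax)_n|$ is as large as desired, while the factor $B_j^{-1/p_k}$ with $B_j\to\infty$ keeps $x$ inside $c_0(p)$. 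The delicate point is to coordinate the blocks so that the large contributions do not cancel across blocks and so that the resulting $x$ genuinely lies in $c_0(p)$; this is precisely where the single admissible $B$ of the statement is extracted. A cleaner but heavier alternative is to invoke the FK-space framework: $c_0(p)$ is a complete paranormed (hence FK) space and $\ell_1$ is a BK space, so $A$ is automatically continuous, and describing a local base of $c_0(p)$ by the sets $\{x:|x_k|\le B^{-1/p_k}\}$ converts continuity at the origin directly into the existence of one $B$ furnishing the required uniform bound.
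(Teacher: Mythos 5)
First, a point of comparison: the paper itself does not prove this lemma --- it is quoted from Grosse-Erdmann \cite[Theorem 5.1.1 with $q_n=1$]{kgge} without proof --- so your proposal is a reconstruction rather than a rival to an internal argument. In substance it is correct, and its two-step scheme (a weighted reduction to a classical matrix class, plus a softness argument for necessity) is essentially how results of this type are established in the literature. Your sufficiency direction is complete: with $\widehat{a}_{nk}=a_{nk}B^{-1/p_k}$ the displayed condition is exactly the classical characterization of $(\ell_{\infty}:\ell_{1})$ (available, e.g., from Stieglitz--Tietz \cite{msht}); since $|x_k|^{p_k}\to 0$ gives $|x_k|\le B^{-1/p_k}$ for all but finitely many $k$, the sequence $c_k=x_kB^{1/p_k}$ is bounded, and $Ax=\widehat{A}c\in\ell_1$. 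Your side remark is also right: for fixed $n$ and any finite $K$ one has $\bigl|\sum_{k\in K}\widehat{a}_{nk}\bigr|\le\sum_{m}\bigl|\sum_{k\in K}\widehat{a}_{mk}\bigr|$, so the uniform bound does force each row of $\widehat{A}$ into $\ell_1$, which settles convergence of the defining series.

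For necessity, your primary suggestion (the gliding hump) is not a proof as written: the negation of the condition supplies, for each $B$, finite sets with large weighted sums, but when you superpose disjoint blocks the contributions to $|(Ax)_n|$ can cancel across blocks, and your text names this difficulty (``coordinate the blocks so that the large contributions do not cancel'') without resolving it; the sentence claiming this is ``precisely where the single admissible $B$ is extracted'' describes the obstacle rather than a mechanism. What saves the proposal is that your ``heavier alternative'' is in fact a complete and standard argument: $c_0(p)$ is a complete paranormed space with continuous coordinates (with paranorm $h_1(x)=\sup_k|x_k|^{p_k/M}$, no restriction on $\inf p_k$ needed for $c_0(p)$), so the closed graph theorem for $F$-spaces --- note it must be the non-locally-convex version, since $c_0(p)$ need not be locally convex when some $p_k<1$ --- together with a Banach--Steinhaus argument making each row functional continuous, shows the everywhere-defined map $A:c_0(p)\to\ell_1$ is continuous. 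The sets $V_B=\{x\in c_0(p):|x_k|\le B^{-1/p_k} \ \textrm{for all} \ k\}$ satisfy $h_1(x)\le B^{-1/M}$ on $V_B$, hence form a base at $\theta$; choosing $B\in\mathbb{N}_2$ with $B^{-1/M}$ inside the continuity threshold and testing on the finitely supported vectors $x=\sum_{k\in K}B^{-1/p_k}e^{(k)}\in V_B$ gives $\sum_n\bigl|\sum_{k\in K}a_{nk}B^{-1/p_k}\bigr|\le 1$ uniformly over $K\in\mathcal{F}$, which is the condition with a single $B$. So the proposal is correct provided the FK-continuity route is taken as the actual proof and the gliding-hump sketch is either discarded or genuinely carried out.
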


\begin{lemma}\label{l2.4}\cite [Theorem 5.1.9 with $q_n=1$]{kgge}
$A\in (c_{0}(p):c(q))$  if and only if
\begin{equation}
\sup_{n \in \mathcal{\mathbb{N}}} \sum_{k}
|a_{nk}|B^{-1/p_{k}}<\infty \ \ (\exists B\in \mathbb{N}_{2}),
\end{equation}
\begin{equation}
\exists (\beta_{k})\subset \mathbb{R}\ni \lim _{n\rightarrow \infty}
|a_{nk}-\beta_{k}|=0\    \ for\  \ all\  k\in \mathbb{N},
\end{equation}
\begin{equation}
\exists (\beta_{k})\subset \mathbb{R}\ni \sup _{n\in \mathbb{N}}
\sum_{k}|a_{nk}-\beta_{k}|B^{-1/p_{k}}<\infty. \quad (\exists B\in
\mathbb{N}_{2})
\end{equation}
\end{lemma}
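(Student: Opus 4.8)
Since taking $q_n=1$ collapses the target space $c(q)$ to the ordinary space $c$ of convergent sequences, the statement to be proved is the characterization of the class $(c_{0}(p):c)$. Throughout I would rely on the description of the $\beta$-dual of $c_{0}(p)$: a sequence $a=(a_{k})$ has the property that $\sum_{k}a_{k}x_{k}$ converges for every $x\in c_{0}(p)$ if and only if $\sum_{k}|a_{k}|B^{-1/p_{k}}<\infty$ for some integer $B\geq 2$. One direction of this is immediate—if $x\in c_{0}(p)$ then $|x_{k}|^{p_{k}}\to 0$, so $|x_{k}|B^{1/p_{k}}=(|x_{k}|^{p_{k}}B)^{1/p_{k}}\to 0$ because $p_{k}\leq H$, whence $\sum_{k}|a_{k}x_{k}|$ converges by comparison with $\sum_{k}|a_{k}|B^{-1/p_{k}}$—while the converse is a gliding-hump construction. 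The plan is to treat sufficiency and necessity separately, the boundedness conditions being the substantive part.

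For sufficiency, suppose the three displayed conditions hold. The first places every row $(a_{nk})_{k}$ in $[c_{0}(p)]^{\beta}$, so each $(Ax)_{n}=\sum_{k}a_{nk}x_{k}$ exists; passing to the limit $n\to\infty$ in the third condition, with the help of the second, shows $(\beta_{k})\in[c_{0}(p)]^{\beta}$, so that $\sum_{k}\beta_{k}x_{k}$ converges. I would then write
$$(Ax)_{n}-\sum_{k}\beta_{k}x_{k}=\sum_{k\leq K}(a_{nk}-\beta_{k})x_{k}+\sum_{k>K}(a_{nk}-\beta_{k})x_{k}.$$
Fixing $x\in c_{0}(p)$ and $\varepsilon>0$, the relation $|x_{k}|B^{1/p_{k}}\to 0$ lets me choose $K$ with $|x_{k}|<\varepsilon B^{-1/p_{k}}$ for $k>K$, so the tail is at most $\varepsilon\sup_{n}\sum_{k}|a_{nk}-\beta_{k}|B^{-1/p_{k}}$, bounded uniformly in $n$ by the third condition, while the remaining finite sum tends to $0$ as $n\to\infty$ by the second condition. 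Hence $(Ax)_{n}\to\sum_{k}\beta_{k}x_{k}$ and $Ax\in c$.

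For necessity, assume $A\in(c_{0}(p):c)$. Applying $A$ to the unit sequences $e^{(k)}\in c_{0}(p)$ gives $Ae^{(k)}=(a_{nk})_{n}\in c$, which is precisely the columnwise convergence in the second condition and fixes the limits $\beta_{k}$. For the first condition I would use that $c_{0}(p)$ and $c$ are $FK$-spaces and that a matrix map between $FK$-spaces is automatically continuous; since each coordinate functional $x\mapsto(Ax)_{n}$ is then continuous on $c_{0}(p)$ and $\sup_{n}|(Ax)_{n}|<\infty$ for every $x$ (because $Ax\in c\subset\ell_{\infty}$), a uniform boundedness argument adapted to the paranorm of $c_{0}(p)$ produces a single integer $B\geq 2$ realizing the first supremum. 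The third condition follows by the same scheme applied to the matrix $(a_{nk}-\beta_{k})$, which maps $c_{0}(p)$ into $c_{0}$ by the computation carried out in the sufficiency part.

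The main obstacle is the necessity of the two supremum conditions with one common $B$ valid for all rows simultaneously. Because $c_{0}(p)$ is in general only a paranormed, non-normable $FK$-space, the classical Banach--Steinhaus theorem does not apply verbatim; one must invoke the uniform boundedness principle in the form available for $FK$-spaces and keep careful track of how the $F$-norm of $c_{0}(p)$ forces both the weights $B^{-1/p_{k}}$ and the uniform choice of $B$. Pinning down this quantitative dependence, rather than the splitting estimates, is where the real work lies.
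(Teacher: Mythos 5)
The paper contains no proof of this lemma to compare against: it is quoted verbatim from Grosse-Erdmann (Theorem 5.1.9 of \cite{kgge}, specialized to $q_n=1$). Your proposal reconstructs the standard proof from that literature (Lascarides--Maddox, Grosse-Erdmann), and it is sound in outline. The sufficiency half is complete and correct: for $x\in c_0(p)$ and $\varepsilon<1$ one has $\varepsilon^{p_k}\geq\varepsilon^{H}$ since $p_k\leq H$, so $|x_k|^{p_k}\to 0$ indeed yields $|x_k|<\varepsilon B^{-1/p_k}$ for large $k$, and your splitting of $(Ax)_n-\sum_k\beta_k x_k$ then works exactly as stated.

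Two points in the necessity half need tightening. First, the uniform-boundedness step you flag as ``where the real work lies'' should actually be carried out, and it is more routine than you suggest: $c_0(p)$ is a complete paranormed FK-space, so Banach--Steinhaus in its F-space form gives equicontinuity of the functionals $x\mapsto(Ax)_n$, i.e. a $\delta>0$ with $|(Ax)_n|\leq 1$ whenever $g(x)=\sup_k|x_k|^{p_k/M}\leq\delta$; testing with the finitely supported vectors $x_k=\delta^{M/p_k}\,\mathrm{sgn}(a_{nk})$ for $k\leq K$ (which satisfy $g(x)=\delta$) and choosing an integer $B\geq\max\{2,\delta^{-M}\}$, so that $B^{-1/p_k}\leq\delta^{M/p_k}$, yields $\sum_{k\leq K}|a_{nk}|B^{-1/p_k}\leq 1$ uniformly in $n$ and $K$ --- the first condition with a single common $B$. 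Second, your derivation of the third condition is circular as written: you justify that $(a_{nk}-\beta_k)$ maps $c_0(p)$ into $c_0$ ``by the computation carried out in the sufficiency part,'' but that computation used the third condition, which is precisely what you are trying to establish at that stage. The fix is cheaper than another uniform-boundedness pass: by Fatou's lemma applied to the second condition, $\sum_k|\beta_k|B^{-1/p_k}\leq\liminf_n\sum_k|a_{nk}|B^{-1/p_k}<\infty$, and the triangle inequality then gives $\sup_n\sum_k|a_{nk}-\beta_k|B^{-1/p_k}<\infty$ with the same $B$. This incidentally shows that for $q_n=1$ the third condition is redundant, given the first two --- a collapse that the specialization from Grosse-Erdmann's general $c(q)$ statement leaves invisible, and worth noting explicitly.
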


\begin{lemma}\label{l2.5}\cite [Theorem 5.1.13 with $q_n=1$]{kgge}
$A\in (c_{0}(p):\ell_{\infty}(q))$  if and only if
\begin{equation}
\sup_{n\in \mathbb{N}}\sum_{k} |a_{nk}|B^{-1/p_{k}}<\infty. \quad
(\exists B\in \mathbb{N}_{2})
\end{equation}
\end{lemma}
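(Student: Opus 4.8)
The plan is to prove the two implications separately, noting first that with $q_n=1$ the target space $\ell_{\infty}(q)$ is just $\ell_{\infty}$, so the assertion is that $A$ maps $c_0(p)$ into $\ell_{\infty}$ if and only if $\sup_n\sum_k|a_{nk}|B^{-1/p_k}<\infty$ for some integer $B\geq 2$.

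For sufficiency I would start from such a $B$ and set $C:=\sup_n\sum_k|a_{nk}|B^{-1/p_k}$. Testing the hypothesis against single columns shows $|a_{nk}|\leq B^{1/p_k}C$ for every $n,k$, so each column of $A$ is bounded over $n$. Given $x\in c_0(p)$, the relation $|x_k|^{p_k}\to 0$ furnishes an index $k_0$ with $|x_k|\leq B^{-1/p_k}$ for all $k\geq k_0$. Splitting $(Ax)_n$ at $k_0$, the tail is dominated by $\sum_{k\geq k_0}|a_{nk}|B^{-1/p_k}\leq C$, while the head is a fixed finite sum with bounded columns, hence bounded uniformly in $n$; this estimate also makes each series $\sum_k a_{nk}x_k$ absolutely convergent. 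Therefore $Ax\in\ell_{\infty}$, and this direction is routine.

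For necessity I would invoke the FK-space machinery. The space $c_0(p)$ is a complete linear metric (FK-) space under the paranorm $g_0(x)=\sup_k|x_k|^{p_k/M}$ with continuous coordinate functionals (Maddox \cite{m1,m2}). If $A\in(c_0(p):\ell_{\infty})$ then each row determines a well-defined functional $f_n(x)=(Ax)_n=\sum_k a_{nk}x_k$; since $f_n$ is the pointwise limit of the continuous partial-sum functionals on the Fréchet space $c_0(p)$, it is itself continuous and linear. The hypothesis $Ax\in\ell_{\infty}$ says precisely that $\sup_n|f_n(x)|<\infty$ for every fixed $x$, and the uniform boundedness principle for complete metric linear spaces then upgrades this pointwise boundedness to equicontinuity of the family $\{f_n\}$ at the origin.

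The main obstacle is converting equicontinuity into the explicit summability condition; I expect the remainder to be bookkeeping. Concretely, I would pick $\delta>0$ with $g_0(x)\leq\delta\Rightarrow\sup_n|f_n(x)|\leq 1$, fix an integer $B\geq 2$ large enough that $B^{-1/M}\leq\delta$, and test $f_n$ against the finitely supported sequences $x^{(K,n)}=\sum_{k\in K}B^{-1/p_k}\,\mathrm{sgn}(a_{nk})\,e^{(k)}$ for finite $K\subset\mathbb{N}$. Each such sequence satisfies $g_0(x^{(K,n)})\leq B^{-1/M}\leq\delta$, while $f_n(x^{(K,n)})=\sum_{k\in K}|a_{nk}|B^{-1/p_k}$, so equicontinuity gives $\sum_{k\in K}|a_{nk}|B^{-1/p_k}\leq 1$. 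Because this bound is independent of both $K$ and $n$, letting $K$ exhaust $\mathbb{N}$ and then taking the supremum over $n$ yields $\sup_n\sum_k|a_{nk}|B^{-1/p_k}\leq 1<\infty$, as required. An alternative route that sidesteps the uniform boundedness principle is a gliding-hump argument: assuming the condition fails for every $B$, one selects blocks of indices on which $\sum_k|a_{nk}|B^{-1/p_k}$ is arbitrarily large and patches them into a single sequence $x\in c_0(p)$ for which $(Ax)_n$ is unbounded; I would expect this construction to be the more delicate of the two, the care being to keep the assembled sequence inside $c_0(p)$ while its partial contributions blow up.
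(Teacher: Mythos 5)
The paper offers no proof of this lemma at all: it is imported verbatim from Grosse-Erdmann (Theorem 5.1.13 of \cite{kgge} with $q_{n}=1$), and the authors explicitly say they only ``quote'' Lemmas \ref{l2.3}--\ref{l2.8}. So your proposal cannot match the paper's proof --- there is none --- but it is a correct, self-contained reconstruction of the standard argument behind the quoted result. Your sufficiency direction is sound: from $|x_{k}|^{p_{k}}\to 0$ you get $|x_{k}|\leq B^{-1/p_{k}}$ for $k\geq k_{0}$, the tail of $\sum_{k}|a_{nk}||x_{k}|$ is bounded by $C$, and the head is handled by the columnwise bound $|a_{nk}|\leq CB^{1/p_{k}}$, which also gives absolute convergence of each row series. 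Your necessity direction is likewise correct, with two points worth making explicit: first, the uniform boundedness principle you invoke is the F-space (Baire category) version, which needs completeness but not local convexity --- and $c_{0}(p)$ is indeed complete under $g_{0}(x)=\sup_{k}|x_{k}|^{p_{k}/M}$ with \emph{no} restriction $\inf p_{k}>0$ (that caveat, visible in the paper's Theorem \ref{t2.1}, afflicts only $\ell_{\infty}(p)$ and $c(p)$), so the machinery applies; second, your test sequences $x^{(K,n)}$ are finitely supported, hence automatically in $c_{0}(p)$, and the computation $g_{0}(x^{(K,n)})=B^{-1/M}\leq\delta$, $f_{n}(x^{(K,n)})=\sum_{k\in K}|a_{nk}|B^{-1/p_{k}}\leq 1$ is exactly right, with the bound uniform in $K$ and $n$. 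What your route buys over the paper's citation is transparency: the reader sees why an existential $B$ appears (it is manufactured from the equicontinuity modulus $\delta$ via $B^{-1/M}\leq\delta$). The gliding-hump alternative you sketch would also work and avoids Baire category, but as you anticipate it is fiddlier, and you were right not to pursue it when the functional-analytic route closes cleanly.
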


\begin{lemma} \label{l2.6}\cite [Theorem 5.1.0 with $q_{n}=1$]{kgge}(i) Let $1<p_{k}\leq
H<\infty$ for all $k\in \mathbb{N}$. Then, $A\in (\ell(p):\ell_{1})$
if and only if there exists an integer $B>1$ such that
\begin{equation}\label{2.11}
\sup_{K \in \mathcal{F}} \sum_{k}\left|\sum_{n\in K}
a_{nk}B^{-1}\right|^{p_{k}^{'}}<\infty .
\end{equation}
(ii) Let $0<p_{k}\leq 1$ for all $k\in \mathbb{N}$. Then, $A\in
(\ell(p):\ell_{1})$ if and only if
\begin{equation}\label{2.12}
\sup_{K \in \mathcal{F}} \sup_{k\in \mathbb{N}}\left|\sum_{n\in K}
a_{nk}\right|^{p_{k}}<\infty .
\end{equation}
\end{lemma}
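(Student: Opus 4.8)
The plan is to reduce both parts to a single observation about $\ell_{1}$ and then apply, respectively, a variable-exponent Young/H\"older duality (for (i)) and an elementary splitting estimate (for (ii)). Since the entries $a_{nk}$ are real and the scalars are real, a real sequence $y=(y_{n})$ lies in $\ell_{1}$ if and only if $\sup_{K\in\mathcal{F}}|\sum_{n\in K}y_{n}|<\infty$, and this supremum is comparable to $\sum_{n}|y_{n}|$ (take $K$ to be the set of positive, resp. negative, terms). Applying this with $y=Ax$ and writing $c_{Kk}=\sum_{n\in K}a_{nk}$ and $f_{K}(x)=\sum_{n\in K}(Ax)_{n}=\sum_{k}c_{Kk}x_{k}$ (the interchange being legitimate because $K$ is finite), I would recast $A\in(\ell(p):\ell_{1})$ as the statement that $\sup_{K\in\mathcal{F}}|f_{K}(x)|<\infty$ for every $x\in\ell(p)$. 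Thus in both regimes it suffices to characterize boundedness of the family $\{f_{K}\}_{K\in\mathcal{F}}$ on $\ell(p)$, the two cases differing only in the duality inequality employed; note that the case $K=\{n\}$ already forces absolute convergence of each row.

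For the sufficiency of (i) I would fix the integer $B>1$ supplied by (\ref{2.11}) and apply Young's inequality $uv\le\frac{1}{p_{k}'}u^{p_{k}'}+\frac{1}{p_{k}}v^{p_{k}}$ to $u=|c_{Kk}|/B$ and $v=B|x_{k}|$, giving $\sum_{k}|c_{Kk}x_{k}|\le\sum_{k}|c_{Kk}/B|^{p_{k}'}+B^{H}\sum_{k}|x_{k}|^{p_{k}}$, which is bounded uniformly in $K$ by (\ref{2.11}) together with $x\in\ell(p)$ (here $B^{p_{k}}\le B^{H}$). For the sufficiency of (ii), where $\sup_{K,k}|c_{Kk}|^{p_{k}}=:S<\infty$, I would split each $\sum_{k}|c_{Kk}x_{k}|$ according to whether $|c_{Kk}x_{k}|<1$ or $\ge1$: on the first set $|c_{Kk}x_{k}|\le|c_{Kk}x_{k}|^{p_{k}}\le S|x_{k}|^{p_{k}}$, while the second set lies in the fixed finite set $\{k:|x_{k}|^{p_{k}}\ge S^{-1}\}$ (independent of $K$, since $|x_{k}|^{p_{k}}\to0$), on which $|c_{Kk}|\le S^{1/p_{k}}$ yields a $K$-uniform estimate. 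Either way $\sup_{K}|f_{K}(x)|<\infty$.

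For necessity I would observe that each coordinate functional $x\mapsto(Ax)_{n}=\sum_{k}a_{nk}x_{k}$ is continuous on $\ell(p)$ (its defining sequence lies in $\ell(p)^{\beta}$), so each $f_{K}$, a finite sum of these, is continuous; since $\sup_{K}|f_{K}(x)|\le\|Ax\|_{1}<\infty$ pointwise and $\ell(p)$ is complete by Theorem \ref{t2.1}, the Banach--Steinhaus theorem makes the family $\{f_{K}\}$ equicontinuous, so there is a $\delta\in(0,1)$ with $g^{*}(x)\le\delta\Rightarrow\sup_{K}|f_{K}(x)|\le1$. In case (ii) I would test with scaled coordinate vectors $x=\lambda e^{(k)}$, choosing $|\lambda|=\delta^{M/p_{k}}$ so that $g^{*}(x)=\delta$; then $\delta^{M/p_{k}}|c_{Kk}|=|f_{K}(x)|\le1$ gives $|c_{Kk}|^{p_{k}}\le\delta^{-M}$ uniformly, which is (\ref{2.12}). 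In case (i) I would fix $K$ and a level $m$ and test with $x_{k}=\operatorname{sgn}(c_{Kk})(|c_{Kk}|/B)^{p_{k}'-1}$ for $k\le m$ and $x_{k}=0$ otherwise; the identity $(p_{k}'-1)p_{k}=p_{k}'$ makes $\sum_{k\le m}|x_{k}|^{p_{k}}=\sum_{k\le m}|c_{Kk}/B|^{p_{k}'}=:\sigma_{m}$ and $f_{K}(x)=B\sigma_{m}$.

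The crux, and the step I expect to be the main obstacle, is extracting (\ref{2.11}) from equicontinuity despite the \emph{non-homogeneity} of the paranorm $g^{*}$; this is precisely what forces the integer $B$ into the statement. I would resolve it by a rescaling device: fix the integer $B>\delta^{-M}$ at the outset, and suppose toward a contradiction that $\sigma_{m}\ge\delta^{M}$. Replacing $x$ by $tx$ with $t=\delta^{M}/\sigma_{m}\in(0,1]$, and using $t^{p_{k}}\le t$ for $p_{k}\ge1$, one gets $\sum_{k}|tx_{k}|^{p_{k}}\le t\sigma_{m}=\delta^{M}$, hence $g^{*}(tx)\le\delta$, so equicontinuity forces $|f_{K}(tx)|=tB\sigma_{m}=\delta^{M}B\le1$, contradicting $\delta^{M}B>1$. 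Therefore $\sigma_{m}<\delta^{M}$ for all $K$ and $m$; letting $m\to\infty$ gives $\sup_{K\in\mathcal{F}}\sum_{k}|c_{Kk}/B|^{p_{k}'}\le\delta^{M}<\infty$, which is exactly (\ref{2.11}) and completes the argument.
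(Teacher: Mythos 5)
Your proof is correct, but there is no in-paper argument to compare it with: the paper does not prove Lemma \ref{l2.6} at all, quoting it verbatim from Grosse-Erdmann \cite{kgge} (Theorem 5.1.0 with $q_{n}=1$), so your reconstruction is a proof of a black-box ingredient rather than an alternative to anything the authors wrote. Measured against the literature proof it replaces, your route is essentially the standard Lascarides--Maddox/Grosse-Erdmann one: sufficiency via the variable-exponent Young inequality in case (i) and via $t\le t^{p_{k}}$ for $0<t<1$, $p_{k}\le 1$ in case (ii), and necessity via uniform boundedness on the complete paranormed space $\ell(p)$ tested on finitely supported vectors. Your reduction of $Ax\in\ell_{1}$ to $\sup_{K\in\mathcal{F}}\left|\sum_{n\in K}(Ax)_{n}\right|<\infty$ is exactly what brings the finite sets $\mathcal{F}$ into the conditions, and your rescaling device ($t^{p_{k}}\le t$ for $p_{k}\ge 1$) is the right way to extract a single integer $B$ from equicontinuity despite the non-homogeneity of the paranorm --- this is indeed the step where a naive homogeneity argument would fail, and you handle it properly. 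The details check out: the test vectors are finitely supported, hence in $\ell(p)$; the identity $(p_{k}'-1)p_{k}=p_{k}'$ does give $f_{K}(x)=B\sigma_{m}$; and in case (ii) the possibly unbounded factor $S^{1/p_{k}}$ (note $\inf p_{k}$ may be $0$) is only invoked on the finite set $\{k:|x_{k}|^{p_{k}}\ge S^{-1}\}$, which depends on $x$ alone, so uniformity in $K$ survives. Two small repairs, neither affecting the substance: completeness of $\ell(p)$ is Maddox's result recalled in Section 1 (the paranorm $h_{2}$), not Theorem \ref{t2.1}, which concerns the transformed spaces; and the continuity of each coordinate functional $x\mapsto (Ax)_{n}$ deserves its one-line justification --- the partial sums $x\mapsto\sum_{k\le m}a_{nk}x_{k}$ are continuous because $|x_{k}|\le g^{*}(x)^{M/p_{k}}$, and their pointwise limit is continuous by the Banach--Steinhaus convergence theorem, valid on the F-space $\ell(p)$, which is also the theorem underlying your equicontinuity step.
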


\begin{lemma} \label{l2.7}\cite [ Theorem 1 (i)-(ii)]{kgge} (i) Let $1<p_{k}\leq
H<\infty$ for all $k\in \mathbb{N}$. Then, $A\in
(\ell(p):\ell_{\infty})$ if and only if there exists an integer
$B>1$ such that

\begin{equation}\label{2.13}
\sup_{n \in \mathbb{N} } \sum_{k} |a_{nk}B^{-1}|^{p_{k}^{'}}<\infty
.
\end{equation}
(ii) Let $0<p_{k}\leq 1$ for all $k\in \mathbb{N}$. Then, $A\in
(\ell(p):\ell_{\infty})$ if and only if
\begin{equation}\label{2.14}
\sup_{n,k \in \mathbb{N}}|a_{nk}|^{p_{k}}<\infty.
\end{equation}
\end{lemma}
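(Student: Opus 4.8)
The plan is to prove each of the two biconditionals by establishing sufficiency and necessity separately, treating part (i) (the range $1<p_k\le H$) through Young/Hölder duality and part (ii) (the range $0<p_k\le1$) by elementary estimates that exploit the subadditivity of $t\mapsto t^{p_k}$. Throughout I would use that $\ell(p)$ is a complete paranormed space, by the same kind of completeness argument as in Theorem~\ref{t2.1}, so that the uniform boundedness principle is available, and that $A\in(\ell(p):\ell_\infty)$ means precisely that each series $(Ax)_n=\sum_k a_{nk}x_k$ converges and $\sup_n|(Ax)_n|<\infty$ for every $x\in\ell(p)$.

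For the sufficiency in part (i), assume $K:=\sup_n\sum_k|a_{nk}B^{-1}|^{p_k'}<\infty$ for some integer $B>1$ and fix $x\in\ell(p)$, so that $L:=\sum_k|x_k|^{p_k}<\infty$. I would split the index set into $E_1=\{k:|x_k|\le B^{-1}\}$ and $E_2=\{k:|x_k|>B^{-1}\}$. On $E_2$ each term satisfies $|x_k|^{p_k}>B^{-H}$, so $E_2$ is finite (it has fewer than $LB^{H}$ elements); since the hypothesis forces $|a_{nk}|\le K^{1/p_k'}B$ for every $n$, the partial sum $\sum_{k\in E_2}|a_{nk}x_k|$ is a finite sum of terms each bounded in $n$. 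On $E_1$, Young's inequality $uv\le u^{p_k'}+v^{p_k}$ applied to $u=|a_{nk}|B^{-1}$ and $v=B|x_k|$ gives $|a_{nk}x_k|\le(|a_{nk}|B^{-1})^{p_k'}+(B|x_k|)^{p_k}$; summing and using $B^{p_k}\le B^{H}$ yields $\sum_{k\in E_1}|a_{nk}x_k|\le K+B^{H}L$, a bound independent of $n$. Hence $Ax\in\ell_\infty$.

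For the necessity in part (i), I would argue by the uniform boundedness principle: since $\ell(p)$ is complete and each row defines a continuous linear functional $f_n(x)=\sum_k a_{nk}x_k$, the pointwise bound $\sup_n|f_n(x)|<\infty$ forces $\{f_n\}$ to be equicontinuous, and translating equicontinuity through the known description of the continuous dual of $\ell(p)$ produces the single integer $B$ for which $\sup_n\sum_k|a_{nk}B^{-1}|^{p_k'}<\infty$. I expect this translation step to be the main obstacle, because the variable exponents block a one-line Hölder computation; the self-contained alternative is a gliding-hump construction, in which, if the condition failed for every $B$, one selects rows $n_j$ and disjoint blocks of columns on which $\sum|a_{n_jk}B^{-1}|^{p_k'}$ is large and assembles from them a single $x\in\ell(p)$ with $(Ax)_{n_j}$ unbounded, contradicting $A\in(\ell(p):\ell_\infty)$.

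For part (ii) the estimates are elementary. For sufficiency, set $M=\sup_{n,k}|a_{nk}|^{p_k}$ and, for fixed $x\in\ell(p)$, let $S=\{k:|x_k|^{p_k}>1/M\}$, which is finite since $\sum_k|x_k|^{p_k}<\infty$. Off $S$ one has $|a_{nk}x_k|^{p_k}\le M|x_k|^{p_k}\le1$, whence, because $p_k\le1$, $|a_{nk}x_k|\le|a_{nk}x_k|^{p_k}\le M|x_k|^{p_k}$, so $\sum_{k\notin S}|a_{nk}x_k|\le ML$; on the finite set $S$ the terms $|a_{nk}x_k|\le M^{1/p_k}|x_k|$ are individually bounded in $n$, giving a uniform bound for $|(Ax)_n|$. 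For necessity, if $\sup_{n,k}|a_{nk}|^{p_k}=\infty$ I would pick indices $(n_j,k_j)$ with $k_j$ increasing and $|a_{n_jk_j}|^{p_{k_j}}\to\infty$ rapidly, and set $x_{k_j}$ so that $|x_{k_j}|^{p_{k_j}}=j^{-2}$ and $x_k=0$ otherwise; then $x\in\ell(p)$ while the diagonal contribution to $(Ax)_{n_j}$ grows like $j^{1/p_{k_j}}\ge j$, so after thinning the blocks to control the off-diagonal cross terms one obtains $Ax\notin\ell_\infty$, the desired contradiction.
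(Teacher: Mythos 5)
You should first note that the paper contains no proof of this lemma at all: it is quoted verbatim from Grosse-Erdmann \cite{kgge} (the result itself goes back to Lascarides and Maddox \cite{cgm}), so there is no internal argument to compare against, and your attempt has to be measured against the classical proofs. In outline it reproduces them. Both sufficiency halves are complete and correct. In (i) your split into $E_1$ and $E_2$ is actually unnecessary: Young's inequality $uv\le u^{p_k'}+v^{p_k}$ with $u=|a_{nk}|B^{-1}$ and $v=B|x_k|$ is valid for every $k$, so $\sum_k|a_{nk}x_k|\le K+B^HL$ holds without isolating the finitely many large $|x_k|$; the extra case does no harm but buys nothing. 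The sufficiency estimate in (ii), using $t\le t^{p_k}$ for $t\le 1$, is exactly the standard one.

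The necessity halves are where the substance lies, and both remain sketches. In (i) you correctly locate the crux: Banach--Steinhaus in the complete paranormed space $\ell(p)$ gives equicontinuity of the row functionals, but converting the resulting single $\delta>0$ (with $|f_n(x)|\le 1$ whenever $h_2(x)\le\delta$) into one integer $B$ uniform over all rows requires the Lascarides--Maddox test-vector computation, with finitely supported $x$ whose entries are proportional to $\mathrm{sgn}(a_{nk})|a_{nk}|^{p_k'-1}$ and normalized in $h_2$; you name this step as the obstacle and offer a gliding hump as fallback, but neither is carried out, so this direction is a plan rather than a proof. In (ii) there is one concrete gap: choosing ``$(n_j,k_j)$ with $k_j$ increasing'' is impossible if the unboundedness of $|a_{nk}|^{p_k}$ occurs in a single column; you must first observe that $e^{(k)}\in\ell(p)$ forces $\sup_n|a_{nk}|<\infty$ for each fixed $k$ under the hypothesis $A\in(\ell(p):\ell_\infty)$, after which increasing $k_j$ can indeed be selected and your thinning works (cross terms over earlier columns are controlled uniformly in $n$ by the column suprema, and later columns can be shrunk against the finitely many rows already fixed). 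Note, though, that once equicontinuity is in hand -- which you already invoked for (i) -- necessity in (ii) is a one-line test-vector argument: with $\delta$ as above, taking $x=\delta^{M/p_k}e^{(k)}$ gives $h_2(x)=\delta$ and hence $|a_{nk}|^{p_k}\le\delta^{-M}$ for all $n,k$, which is considerably lighter than the hump construction you propose.
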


\begin{lemma}\label{l2.8}\cite [Corollary for Theorem 1]{kgge} Let $0<p_{k}\leq H<\infty$ for all $k\in
\mathbb{N}$. Then,  $A\in (\ell(p):c)$  if and only if (\ref{2.13}),
(\ref{2.14}) hold, and
\begin{equation}\label{2.15}
\lim_{n\rightarrow \infty} a_{nk}=\beta_{k}, \quad (k\in \mathbb{N})
\end{equation}
also holds.
\end{lemma}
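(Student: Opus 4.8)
The plan is to prove the two implications separately, following the standard template for matrix classes on the Maddox spaces. I will freely use that $\ell(p)$ is a complete paranormed space and that the finitely supported sequences $e^{(k)}$ lie in $\ell(p)$ and span a dense subspace of it. For the necessity, suppose $A\in(\ell(p):c)$. Since $c\subseteq\ell_{\infty}$, we have $A\in(\ell(p):\ell_{\infty})$, so the ``only if'' part of Lemma~\ref{l2.7} forces (\ref{2.13}) when $1<p_{k}\le H$ and (\ref{2.14}) when $0<p_{k}\le1$. Applying $A$ to the basis vector $e^{(k)}\in\ell(p)$ shows that its $A$-transform is the $k$-th column $(a_{nk})_{n\in\mathbb{N}}$, which must converge because $Ae^{(k)}\in c$; this is exactly (\ref{2.15}), with $\beta_{k}=\lim_{n}a_{nk}$.

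For the sufficiency, assume the boundedness condition of Lemma~\ref{l2.7} together with (\ref{2.15}). First I would show that the column-limit sequence $\beta=(\beta_{k})$ inherits the same boundedness: fixing $N$ and letting $n\to\infty$ in the finite sum $\sum_{k\le N}|a_{nk}B^{-1}|^{p_{k}'}$, the limit relation (\ref{2.15}) gives $\sum_{k\le N}|\beta_{k}B^{-1}|^{p_{k}'}\le\sup_{n}\sum_{k}|a_{nk}B^{-1}|^{p_{k}'}$, and letting $N\to\infty$ shows that $\beta$ satisfies (\ref{2.13}) (and, in the range $0<p_{k}\le1$, (\ref{2.14})). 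By the generalized H\"older inequality for $\ell(p)$ this makes $\sum_{k}\beta_{k}x_{k}$ convergent for every $x\in\ell(p)$; set $\ell:=\sum_{k}\beta_{k}x_{k}$. It then remains to prove $(Ax)_{n}\to\ell$, i.e. $\sum_{k}(a_{nk}-\beta_{k})x_{k}\to0$. Here I would fix $\varepsilon>0$, split the series at an index $N$, observe that the finite head $\sum_{k\le N}(a_{nk}-\beta_{k})x_{k}$ tends to $0$ as $n\to\infty$ by (\ref{2.15}), and bound the tail $\sum_{k>N}(a_{nk}-\beta_{k})x_{k}$ uniformly in $n$.

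The main obstacle is precisely this uniform control of the tail. The sequence $d_{nk}:=a_{nk}-\beta_{k}$ again satisfies the Lemma~\ref{l2.7} bound (after replacing $B$ by $2B$), and $x\in\ell(p)$ has $\sum_{k}|x_{k}|^{p_{k}}<\infty$, so the tail $\sum_{k>N}|x_{k}|^{p_{k}}$ is small; but because the conjugate exponents $p_{k}'$ vary with $k$ one cannot apply a single H\"older inequality, and must instead invoke the generalized H\"older estimate appropriate to $\ell(p)$, which bounds $\sum_{k>N}|d_{nk}x_{k}|$ by a constant uniform in $n$ times a quantity that is small once the tail of $\sum_{k}|x_{k}|^{p_{k}}$ is small (with a case split $p_{k}>1$ versus $p_{k}\le1$, the latter using (\ref{2.14}) and $|x_{k}|\to0$ directly). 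Combining the head and tail estimates yields $\limsup_{n}|(Ax)_{n}-\ell|\le\varepsilon$, so $Ax\in c$.

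Alternatively, and more transparently, the whole sufficiency can be obtained by a uniform boundedness argument: since $A\in(\ell(p):\ell_{\infty})$, the linear functionals $f_{n}(x)=\sum_{k}a_{nk}x_{k}$ are well defined, continuous (by the same bound (\ref{2.13})/(\ref{2.14})) and pointwise bounded on the complete paranormed space $\ell(p)$, hence equicontinuous by Banach--Steinhaus; as they converge pointwise on the dense span of $\{e^{(k)}\}$ by (\ref{2.15}), they converge pointwise everywhere to a continuous functional $f$ with $f(x)=\sum_{k}\beta_{k}x_{k}$, giving $(Ax)_{n}\to f(x)$ for all $x\in\ell(p)$. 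Either route identifies the limit as $\sum_{k}\beta_{k}x_{k}$, which is what makes the convergence conditions (\ref{2.13})--(\ref{2.15}) sufficient.
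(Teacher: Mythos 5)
The first thing to note is that the paper does not prove this lemma at all: it is imported verbatim, with the citation [Corollary for Theorem 1] of Grosse-Erdmann, as one of the background results (Lemmas \ref{l2.3}--\ref{l2.8}) used as black boxes in Theorems \ref{t2.9}--\ref{t2.11}. So there is no internal proof to compare against; what you have reconstructed is, in substance, the classical Lascarides--Maddox/Grosse-Erdmann argument, and it is essentially correct. The necessity half is exactly the standard one ($c\subseteq\ell_{\infty}$ plus Lemma \ref{l2.7}, then testing on the unit vectors $e^{(k)}$ to get (\ref{2.15})), and your passage to the limit in finite sections correctly shows that $\beta=(\beta_{k})$ inherits the bound, so $\sum_{k}\beta_{k}x_{k}$ converges for every $x\in\ell(p)$.

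The one place where your sketch is thinner than it should be is the tail estimate, and your own description of it is slightly off: the generalized H\"older inequality does \emph{not} bound $\sum_{k>N}|d_{nk}x_{k}|$ by a uniform constant times a quantity that shrinks with $N$, because $\sup_{n}\sum_{k>N}(|d_{nk}|B^{-1})^{p_{k}'}$ stays bounded but does not tend to $0$ as $N\to\infty$. The standard repair is a scaling trick with an extra parameter: for $0<\delta<1$ and all $n$, applying $uv\le u^{p_{k}'}+v^{p_{k}}$ to $u=|d_{nk}|B^{-1}$ and $v=B|x_{k}|/\delta$ gives
\begin{equation*}
\sum_{k>N}|d_{nk}x_{k}|\le\delta\sum_{k>N}\bigl(|d_{nk}|B^{-1}\bigr)^{p_{k}'}+\delta^{1-H}B^{H}\sum_{k>N}|x_{k}|^{p_{k}},
\end{equation*}
so one first chooses $\delta$ small against the uniform bound (\ref{2.13}) and \emph{then} $N$ against the tail of $\sum_{k}|x_{k}|^{p_{k}}$; over the indices with $p_{k}\le1$ your appeal to (\ref{2.14}) and $|x_{k}|^{p_{k}}\to0$ does work directly (e.g. $|d_{nk}x_{k}|\le K|x_{k}|^{p_{k}}$ once $K|x_{k}|^{p_{k}}\le1$), and your replacement of $B$ by $2B$ for $d_{nk}=a_{nk}-\beta_{k}$ is the right move via $((u+v)/2)^{p_{k}'}\le u^{p_{k}'}+v^{p_{k}'}$. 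Your alternative route via Banach--Steinhaus is also correct --- $\ell(p)$ is an F-space, the span of the $e^{(k)}$ is dense by the AK property, and equicontinuity plus convergence on a dense subspace yields convergence everywhere --- and it is in fact closer in spirit to the functional-analytic methods of the source this lemma is quoted from, whereas the elementary route buys an explicit quantitative proof. Either route establishes the statement, so the only substantive correction is the quantifier order in the tail bound noted above.
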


\begin{thm}\label{t2.9}
Let $K^{*}=\{k\in \mathbb{N}: 0\leq k\leq n\}\cap K$ for $K\in
\mathcal{F}$ and $B\in \mathbb{N}_{2}$. Define the sets
$S_{1}^{\alpha}(r,s),S_{2}^{\alpha}(r,s),S_{3}^{\alpha}(r,s),S_{4}^{\alpha}(r,s),S_{5}^{\alpha}(r,s),S_{6}^{\alpha}(r,s)$
and $S_{7}^{\alpha}(r,s)$ as follows:
\begin{eqnarray*}
S_{1}^{\alpha}(r,s)&=&\bigcup_{B>1} \bigg\{a=(a_{k})\in \omega:
\sup_{K\in \mathcal{F}} \sum_{n} \bigg|\sum_{k\in K^{*}}
\frac{(-1)^{n-k}\alpha_{k}}{r_{n}}\prod_{j=k}^{n-1}
\frac{s_{j}}{r_{j}}a_{n}B^{-1/p_{k}}\bigg|<\infty
\bigg\}\\
S_{2}^{\alpha}(r,s)&=&\bigg\{a=(a_{k})\in \omega: \sum_{n}
\bigg|\sum_{k=0}^{n}
\frac{(-1)^{n-k}\alpha_{k}}{r_{n}}\prod_{j=k}^{n-1}
\frac{s_{j}}{r_{j}}a_{n}\bigg|<\infty\bigg\}\\
S_{3}^{\alpha}(r,s)&=&\bigcup_{B>1} \bigg\{a=(a_{k})\in \omega:
\sup_{n\in \mathbb{N}} \sum_{k=0}^{n}\bigg|\sum_{j=k}^{n}
\frac{(-1)^{j-k}\alpha_{k}}{r_{j}}\prod_{i=k}^{j-1}
\frac{s_{i}}{r_{i}}a_{j}\bigg|B^{-1/p_{k}}<\infty
\bigg\}\\
S_{4}^{\alpha}(r,s)&=&\bigg\{a=(a_{k})\in \omega:
\bigg|\sum_{j=k}^{\infty}
\frac{(-1)^{j-k}\alpha_{k}}{r_{j}}\prod_{i=k}^{j-1}
\frac{s_{i}}{r_{i}}a_{j}\bigg|<\infty \quad
\textrm{for all} \quad k\in \mathbb{N}\bigg\}\\
S_{5}^{\alpha}(r,s)&=& \bigcup_{B>1} \bigg\{a=(a_{k})\in \omega:
\exists (\beta_{k})\subset \mathbb{R}\ni \sup_{n\in \mathbb{N}}
\sum_{k=0}^{n} \bigg|\sum_{j=k}^{n}
\frac{(-1)^{j-k}\alpha_{k}}{r_{j}}\prod_{i=k}^{j-1}
\frac{s_{i}}{r_{i}}a_{j}-\beta_{k}\bigg|B^{-1/p_{k}}<\infty
\bigg\}\\
S_{6}^{\alpha}(r,s)&=&\bigg\{a=(a_{k})\in \omega: \exists \beta \in
\mathbb{R}\ni \lim_{n\rightarrow \infty} \bigg|\sum_{k=0}^{n}
\sum_{j=k}^{n} \frac{(-1)^{j-k}\alpha_{k}}{r_{j}}\prod_{i=k}^{j-1}
\frac{s_{i}}{r_{i}}a_{j}-\beta
\bigg|=0\bigg\}\\
S_{7}^{\alpha}(r,s)&=&\bigg\{a=(a_{k})\in \omega: \sup_{n\in
\mathbb{N}} \bigg|\sum_{k=0}^{n} \sum_{j=k}^{n}
\frac{(-1)^{j-k}\alpha_{k}}{r_{j}}\prod_{i=k}^{j-1}
\frac{s_{i}}{r_{i}}a_{j}\bigg|<\infty\bigg\}
\end{eqnarray*}
Then,

(i)
$\{s_{\alpha}^{0}(\widetilde{B},p)\}^{\alpha}=S_{1}^{\alpha}(r,s)$
\qquad (ii)
$\{s_{\alpha}^{(c)}(\widetilde{B},p)\}^{\alpha}=S_{1}^{\alpha}(r,s)\cap
S_{2}^{\alpha}(r,s)$ \\

(iii)
$\{s_{\alpha}^{0}(\widetilde{B},p)\}^{\beta}=S_{3}^{\alpha}(r,s)\cap
S_{4}^{\alpha}(r,s)\cap S_{5}^{\alpha}(r,s)$ \\

(iv) $\{s_{\alpha}^{(c)}(\widetilde{B},p)\}^{\beta}=
\{s_{\alpha}^{0}(\widetilde{B},p)\}^{\beta}\cap S_{6}^{\alpha}(r,s)$\\

(v)
$\{s_{\alpha}^{0}(\widetilde{B},p)\}^{\gamma}=S_{3}^{\alpha}(r,s)$
\quad (vi)
$\{s_{\alpha}^{(c)}(\widetilde{B},p)\}^{\gamma}=S_{3}^{\alpha}(r,s)\cap
S_{7}^{\alpha}(r,s)$
\end{thm}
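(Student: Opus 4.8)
The plan is to reduce every assertion to a known matrix-class characterization by exploiting the isomorphism $T\colon x\mapsto y=\widetilde{T}x$ of Theorem~\ref{t2.2}, together with the inverse relation recorded in (\ref{2.7}), namely $x_k=\sum_{j=0}^{k}u_{kj}y_j$ with $u_{kj}=\frac{(-1)^{k-j}\alpha_j}{r_k}\prod_{i=j}^{k-1}\frac{s_i}{r_i}$ for $0\le j\le k$ and $u_{kj}=0$ for $j>k$. Throughout I write $\mu\in\{c_0,c\}$ for the space whose Maddox analogue $\mu(p)$ is the $\widetilde{T}$-image of the relevant $\lambda(\widetilde{B},p)$, so that $z\mapsto y$ is a bijection of $\lambda(\widetilde{B},p)$ onto $\mu(p)$.

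For the $\alpha$-duals I would substitute the inverse relation into $a_nz_n$: since $a_nz_n=\sum_{j=0}^{n}\big(a_nu_{nj}\big)y_j=(Vy)_n$, where $V=(v_{nj})$ with $v_{nj}=a_nu_{nj}$, the sequence $az$ lies in $\ell_1$ for every $z$ precisely when $V\in(\mu(p):\ell_1)$. For $\mu=c_0$ this is exactly Lemma~\ref{l2.3}, and writing out $v_{nj}$ and replacing the summation set $K$ by $K^{*}$ (because $v_{nj}=0$ for $j>n$) reproduces $S_1^{\alpha}(r,s)$, giving (i). For $\mu=c$ I would use $c(p)=c_0(p)\oplus\langle e\rangle$: testing against $y\in c_0(p)$ returns the $S_1^{\alpha}$ condition, while testing against $e$ forces $Ve\in\ell_1$, which is precisely $S_2^{\alpha}(r,s)$; hence (ii).

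For the $\beta$- and $\gamma$-duals the key manipulation is to form the partial sums $\sum_{k=0}^{n}a_kz_k$ and interchange the (finite) order of summation, obtaining $\sum_{k=0}^{n}a_kz_k=\sum_{k=0}^{n}d_{nk}y_k=(Dy)_n$ with $d_{nk}=\sum_{j=k}^{n}a_ju_{jk}=\sum_{j=k}^{n}\frac{(-1)^{j-k}\alpha_k}{r_j}\prod_{i=k}^{j-1}\frac{s_i}{r_i}a_j$ for $k\le n$. The interchange is unproblematic because each $z_k$ is a finite sum, so no convergence question arises at this stage. Consequently $az\in cs$ for all $z$ iff $Dy\in c$ for all $y\in\mu(p)$, i.e. $D\in(\mu(p):c)$, and $az\in bs$ for all $z$ iff $D\in(\mu(p):\ell_\infty)$. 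For $\mu=c_0$ the three conditions of Lemma~\ref{l2.4} translate respectively into $S_3^{\alpha}$ (the uniform bound on $\sum_k|d_{nk}|B^{-1/p_k}$), $S_4^{\alpha}$ (existence of the column limits $\beta_k=\lim_n d_{nk}=\sum_{j=k}^{\infty}u_{jk}a_j$), and $S_5^{\alpha}$ (the uniform bound with $\beta_k$), yielding (iii); Lemma~\ref{l2.5} gives only the first condition, hence (v). For $\mu=c$ I would again split off the sequence $e$: the requirement $De\in c$ contributes $S_6^{\alpha}$ for the $\beta$-dual, and $De\in\ell_\infty$ contributes $S_7^{\alpha}$ for the $\gamma$-dual, giving (iv) and (vi).

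The routine portions are the bookkeeping with the double products $\prod s_i/r_i$ and the relabelling of indices so that the computed matrices $V$ and $D$ align literally with the sets $S_j^{\alpha}(r,s)$. The step that demands the most care is the $c$-case reduction: one must verify that $e\in c(p)$ (true since $|1-1|^{p_k}\to0$) so that $c(p)=c_0(p)\oplus\langle e\rangle$ genuinely holds, and then check that the extra scalar term $l\,De$ produced by a general $y=y_0+le$ is exactly what forces the supplementary sets $S_2^{\alpha},S_6^{\alpha},S_7^{\alpha}$. This ``action on $e$'' argument, rather than any single inequality, is the real content beyond the cited lemmas.
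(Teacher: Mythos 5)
Your proposal is correct, and on the parts the paper actually proves it coincides with the paper's argument: the paper also takes $a=(a_n)$, defines the triangle $C^{\alpha}=\{c^{\alpha}_{nk}(r,s)\}$ with $c^{\alpha}_{nk}=\frac{(-1)^{n-k}\alpha_{k}}{r_{n}}\prod_{j=k}^{n-1}\frac{s_{j}}{r_{j}}\,a_{n}$ (your $V$), uses the inverse relation (\ref{2.7}) to get $a_{n}x_{n}=(C^{\alpha}y)_{n}$, forms the partial-sum matrix $D^{\alpha}$ exactly as your $D$ via the finite interchange of summation in (\ref{2.17}), and then quotes Lemmas \ref{l2.3}, \ref{l2.4} and \ref{l2.5} for the classes $(c_{0}(p):\ell_{1})$, $(c_{0}(p):c)$ and $(c_{0}(p):\ell_{\infty})$, yielding (i), (iii) and (v). Where you genuinely diverge is on the $s_{\alpha}^{(c)}$ parts (ii), (iv), (vi): the paper proves only the $s_{\alpha}^{0}$ statements and leaves the rest to ``the similar way,'' which in practice means invoking Grosse-Erdmann's parallel characterizations of $(c(p):\ell_{1})$, $(c(p):c)$ and $(c(p):\ell_{\infty})$ from \cite{kgge}; you instead derive them from the same $c_{0}(p)$-lemmas through the splitting $c(p)=c_{0}(p)+\langle e\rangle$, testing $V$ and $D$ separately on $c_{0}(p)$ and on $e$, which exhibits $S_{2}^{\alpha}$, $S_{6}^{\alpha}$ and $S_{7}^{\alpha}$ precisely as the ``action on $e$'' conditions $Ve\in\ell_{1}$, $De\in c$ and $De\in\ell_{\infty}$. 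This route is sound: only the sum decomposition (not its directness) is needed for the two-sided implication, no convergence issues arise since $V$ and $D$ are row-finite, and $e\in c(p)$ trivially; incidentally, uniqueness of $l$ in $y=y^{0}+le$ is automatic from $\sup_{k}p_{k}=H<\infty$, though your argument never uses it. What the paper's implicit route buys is economy, since the $(c(p):\cdot)$ lemmas are ready-made in \cite{kgge}; what yours buys is self-containedness and a transparent explanation of why exactly one extra condition per dual appears when passing from $s_{\alpha}^{0}(\widetilde{B},p)$ to $s_{\alpha}^{(c)}(\widetilde{B},p)$.
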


\begin{proof}
We give the proof for the space $s_{\alpha}^{0}(\widetilde{B},p)$.
Let us take any $a=(a_{n})\in \omega$ and define the matrix
$C^{\alpha}=\{c_{nk}^{\alpha}(r,s)\}$ via the sequence $a=(a_{n})$
by
$$
c_{nk}^{\alpha}(r,s)=\left\{\begin{array}{ll}
  \displaystyle \frac{(-1)^{n-k}\alpha_{k}}{r_{n}}\prod_{j=k}^{n-1}
\frac{s_{j}}{r_{j}}a_{n}, & 0 \leq k \leq   n,\\
   \displaystyle 0, & k>n,
\end{array}\right.
$$
where $n,k\in \mathbb{N}$. Bearing in mind (\ref{2.7}) we
immediately derive that
\begin{eqnarray}\label{2.16}
a_{n}x_{n}=\sum_{k=0}^{n}
\frac{(-1)^{n-k}\alpha_{k}}{r_{n}}\prod_{j=k}^{n-1}
\frac{s_{j}}{r_{j}}a_{n}y_{k}=(C^{\alpha}y)_{n}; \quad (n\in
\mathbb{N}).
\end{eqnarray}
We therefore observe by (\ref{2.16}) that $ax=(a_{n}x_{n})\in
\ell_{1}$ whenever $x\in s_{\alpha}^{0}(\widetilde{B},p)$ if and
only if $Cy\in \ell_{1}$ whenever $y\in c_{0}(p)$. This means that
$a=(a_{n})\in \{s_{\alpha}^{0}(\widetilde{B},p)\}^{\alpha}$ whenever
$x=(x_{n})\in s_{\alpha}^{0}(\widetilde{B},p)$ if and only if
$C^{\alpha}\in (c_{0}(p):\ell_{1})$. Then, we derive by Lemma
\ref{l2.3} that
$$
\{s_{\alpha}^{0}(\widetilde{B},p)\}^{\alpha}=S_{1}^{\alpha}(r,s).
$$

Consider the equation for $n\in \mathbb{N}$,
\begin{eqnarray}\label{2.17}
\sum_{k=0}^{n}a_{k}x_{k}&=&\sum_{k=0}^{n}a_{k} \bigg[\sum_{j=0}^{k}
\frac{(-1)^{k-j}\alpha_{j}}{r_{k}}\prod_{i=j}^{k-1}
\frac{s_{i}}{r_{i}}y_{j}\bigg]\nonumber\\
&=&\sum_{k=0}^{n} \bigg[\sum_{j=k}^{n}
\frac{(-1)^{j-k}\alpha_{k}}{r_{j}}\prod_{i=k}^{j-1}
\frac{s_{i}}{r_{i}}a_{j}\bigg]y_{k} \nonumber\\
&=&(D^{\alpha}y)_{n}
\end{eqnarray}
where $D^{\alpha}=\{d_{nk}^{\alpha}(r,s)\}$ is defined by
$$
d_{nk}=\left\{\begin{array}{ll}
  \displaystyle \sum_{j=k}^{n}
\frac{(-1)^{j-k}\alpha_{k}}{r_{j}}\prod_{i=k}^{j-1}
\frac{s_{i}}{r_{i}}a_{j}, & 0 \leq k \leq n,\\
 \displaystyle 0, & k>n,
\end{array}\right.
$$
where $n,k\in \mathbb{N}$. Thus, we deduce from Lemma \ref{l2.4}
with (\ref{2.17}) that $ax=(a_{k}x_{k})\in cs$ whenever
$x=(x_{k})\in s_{\alpha}^{0}(\widetilde{B},p)$ if and only if
$D^{\alpha} y\in c$ whenever $y=(y_{k})\in c_{0}(p)$. This means
that $a=(a_{n})\in \{s_{\alpha}^{0}(\widetilde{B},p)\}^{\beta}$
whenever $x=(x_{n})\in s_{\alpha}^{0}(\widetilde{B},p)$ if and only
if $D^{\alpha}\in (c_{0}(p):c)$. Therefore we derive from Lemma
\ref{l2.4} that
$$\{s_{\alpha}^{0}(\widetilde{B},p)\}^{\beta}=S_{3}^{\alpha}(r,s)\cap
S_{4}^{\alpha}(r,s)\cap S_{5}^{\alpha}(r,s).$$

As this, we deduce from Lemma \ref{l2.5} with (\ref{2.17}) that
$ax=(a_{k}x_{k})\in bs$ whenever $x=(x_{k})\in
s_{\alpha}^{0}(\widetilde{B},p)$ if and only if $D^{\alpha} y\in
\ell_{\infty}$ whenever $y=(y_{k})\in c_{0}(p)$. This means that
$a=(a_{n})\in \{s_{\alpha}^{0}(\widetilde{B},p)\}^{\gamma}$ whenever
$x=(x_{n})\in s_{\alpha}^{0}(\widetilde{B},p)$ if and only if
$D^{\alpha}\in (c_{0}(p):\ell_{\infty})$. Therefore we obtain Lemma
\ref{l2.5} that
$$
\{s_{\alpha}^{0}(\widetilde{B},p)\}^{\gamma}=S_{3}^{\alpha}(r,s)
$$
and this completes the proof.
\end{proof}

\begin{thm}
Let $K^{*}=\{k\in \mathbb{N}: 0\leq k\leq n\}\cap K$ for $K\in
\mathcal{F}$ and $B\in \mathbb{N}_{2}$. Define the sets
$S_{8}^{\alpha}(r,s),S_{9}^{\alpha}(r,s),S_{10}^{\alpha}(r,s)$ and
$S_{11}^{\alpha}(r,s)$ as follows:
\begin{eqnarray*}
S_{8}^{\alpha}(r,s)&=&\bigcap_{B>1} \bigg\{a=(a_{k})\in \omega:
\sup_{K\in \mathcal{F}} \sum_{n} \bigg|\sum_{k\in
K^{*}}\sum_{j=k}^{n}
\frac{(-1)^{j-k}\alpha_{k}}{r_{j}}\prod_{i=k}^{j-1}
\frac{s_{i}}{r_{i}}a_{j}B^{1/p_{k}}\bigg|<\infty\bigg\}\\
S_{9}^{\alpha}(r,s)&=&\bigcap_{B>1} \bigg\{a=(a_{k})\in
\omega:\sup_{n\in \mathbb{N}} \sum_{k=0}^{n} \bigg|\sum_{j=k}^{n}
\frac{(-1)^{j-k}\alpha_{k}}{r_{j}}\prod_{i=k}^{j-1}
\frac{s_{i}}{r_{i}}a_{j}\bigg|B^{1/p_{k}}<\infty\bigg\}\\
S_{10}^{\alpha}(r,s)&=& \bigcap_{B>1} \bigg\{a=(a_{k})\in \omega:
\exists (\beta_{k})\subset \mathbb{R}\ni \lim_{n\rightarrow \infty}
\sum_{k=0}^{n} \bigg|\sum_{j=k}^{n}
\frac{(-1)^{j-k}\alpha_{k}}{r_{j}}\prod_{i=k}^{j-1}
\frac{s_{i}}{r_{i}}a_{j}-\beta_{k}\bigg|B^{1/p_{k}}=0
\bigg\}\\
S_{11}^{\alpha}(r,s)&=&\bigcap_{B>1} \bigg\{a=(a_{k})\in \omega:
\sup_{n\in \mathbb{N}}\sum_{k=0}^{n} \bigg|\sum_{j=k}^{n}
\frac{(-1)^{j-k}\alpha_{k}}{r_{j}}\prod_{i=k}^{j-1}
\frac{s_{i}}{r_{i}}a_{j}\bigg|B^{1/p_{k}}<\infty\bigg\}
\end{eqnarray*}
Then,

(i) $\{s_{\alpha}^{(\infty)}(\widetilde{B},p)\}^{\alpha}=S_{8}^{\alpha}(r,s)$\\

(ii)
$\{s_{\alpha}^{(\infty)}(\widetilde{B},p)\}^{\beta}=S_{9}^{\alpha}(r,s)\cap
S_{10}^{\alpha}(r,s)$\\

(iii)
$\{s_{\alpha}^{(\infty)}(\widetilde{B},p)\}^{\gamma}=S_{11}^{\alpha}(r,s)$.
\end{thm}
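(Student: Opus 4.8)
The plan is to follow the proof of Theorem \ref{t2.9} line for line, replacing the domain $c_{0}(p)$ by $\ell_{\infty}(p)$ throughout. The two transfer matrices constructed there, namely $C^{\alpha}=\{c_{nk}^{\alpha}(r,s)\}$ satisfying $a_{n}x_{n}=(C^{\alpha}y)_{n}$ in (\ref{2.16}), and $D^{\alpha}=\{d_{nk}^{\alpha}(r,s)\}$ satisfying $\sum_{k=0}^{n}a_{k}x_{k}=(D^{\alpha}y)_{n}$ in (\ref{2.17}), depend only on the inverse relation (\ref{2.7}) and not on which ambient space the sequence $y=\widetilde{T}x$ inhabits. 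Since Theorem \ref{t2.2} furnishes the linear isomorphism $x\mapsto y=\widetilde{T}x$ between $s_{\alpha}^{(\infty)}(\widetilde{B},p)$ and $\ell_{\infty}(p)$, the membership of $ax$ in $\ell_{1}$, $cs$ or $bs$ for every $x\in s_{\alpha}^{(\infty)}(\widetilde{B},p)$ is equivalent, respectively, to $C^{\alpha}\in(\ell_{\infty}(p):\ell_{1})$, $D^{\alpha}\in(\ell_{\infty}(p):c)$ and $D^{\alpha}\in(\ell_{\infty}(p):\ell_{\infty})$.

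First I would establish (i). By (\ref{2.16}), $a=(a_{n})\in\{s_{\alpha}^{(\infty)}(\widetilde{B},p)\}^{\alpha}$ if and only if $C^{\alpha}y\in\ell_{1}$ for every $y\in\ell_{\infty}(p)$, that is, $C^{\alpha}\in(\ell_{\infty}(p):\ell_{1})$; substituting the explicit entries $c_{nk}^{\alpha}(r,s)$ into the characterization of this class yields exactly the defining condition of $S_{8}^{\alpha}(r,s)$. For (ii) I would use (\ref{2.17}) to obtain $a\in\{s_{\alpha}^{(\infty)}(\widetilde{B},p)\}^{\beta}$ iff $D^{\alpha}\in(\ell_{\infty}(p):c)$; the two-part characterization of this class produces the uniform-boundedness condition $S_{9}^{\alpha}(r,s)$ together with the convergence condition $S_{10}^{\alpha}(r,s)$, whence the $\beta$-dual equals $S_{9}^{\alpha}(r,s)\cap S_{10}^{\alpha}(r,s)$. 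Part (iii) follows in the same way from $D^{\alpha}\in(\ell_{\infty}(p):\ell_{\infty})$, whose single defining condition is precisely $S_{11}^{\alpha}(r,s)$.

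The one genuinely new ingredient, and the point I expect to be the main obstacle, is that the excerpt supplies matrix-class lemmas only for the domains $c_{0}(p)$ (Lemmas \ref{l2.3}--\ref{l2.5}) and $\ell(p)$ (Lemmas \ref{l2.6}--\ref{l2.8}), but none for the domain $\ell_{\infty}(p)$. I therefore have to quote the three characterizations of $(\ell_{\infty}(p):\ell_{1})$, $(\ell_{\infty}(p):c)$ and $(\ell_{\infty}(p):\ell_{\infty})$ directly from Grosse-Erdmann \cite{kgge} as the required auxiliary lemmas. Their structural signature differs from the $c_{0}(p)$ case in a predictable way: because the test sequences $y$ now range over all bounded sequences rather than null ones, the existential quantifier $\bigcup_{B>1}$ paired with the damping factor $B^{-1/p_{k}}$ that appears in Theorem \ref{t2.9} is replaced by the universal quantifier $\bigcap_{B>1}$ paired with the amplifying factor $B^{1/p_{k}}$, which is exactly the form in which $S_{8}^{\alpha}(r,s),\dots,S_{11}^{\alpha}(r,s)$ are written. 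Once these lemmas are in hand, the remaining work is the purely mechanical substitution of $c_{nk}^{\alpha}(r,s)$ and $d_{nk}^{\alpha}(r,s)$ into them, identical in spirit to the computation already carried out in Theorem \ref{t2.9}.
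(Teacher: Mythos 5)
Your proposal is correct and is essentially the paper's own (omitted) proof: the paper likewise reduces the three duals, via the matrices $C^{\alpha}$ and $D^{\alpha}$ and the relations (\ref{2.16}) and (\ref{2.17}) from the proof of Theorem \ref{t2.9}, to the classes $(\ell_{\infty}(p):\ell_{1})$, $(\ell_{\infty}(p):c)$ and $(\ell_{\infty}(p):\ell_{\infty})$, whose Grosse-Erdmann characterizations (with the universal quantifier $\bigcap_{B>1}$ and the amplifying factor $B^{1/p_{k}}$, exactly as you predict) yield $S_{8}^{\alpha}(r,s)$, $S_{9}^{\alpha}(r,s)\cap S_{10}^{\alpha}(r,s)$ and $S_{11}^{\alpha}(r,s)$. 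You are also right that these $\ell_{\infty}(p)$-domain lemmas must be imported directly from \cite{kgge}: the paper's one-line proof instead cites Lemmas \ref{l2.6}(i), \ref{l2.7}(i) and \ref{l2.8}, which characterize maps out of $\ell(p)$ rather than $\ell_{\infty}(p)$ --- an evident citation slip that your version silently corrects.
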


\begin{proof}
This may be obtained in the similar way, as mentioned in the proof
of Theorem \ref{t2.9} with Lemmas \ref{l2.6}(i), \ref{l2.7}(i),
\ref{l2.8} instead of Lemmas \ref{l2.3}-\ref{l2.5}. So, we omit the
details.
\end{proof}

\begin{thm}\label{t2.11}
Let $K^{*}=\{k\in \mathbb{N}: 0\leq k\leq n\}\cap K$ for $K\in
\mathcal{F}$ and $B\in \mathbb{N}_{2}$. Define the sets
$S_{12}^{\alpha}(r,s),S_{13}^{\alpha}(r,s),S_{14}^{\alpha}(r,s),S_{15}^{\alpha}(r,s)$
and $S_{16}^{\alpha}(r,s)$ as follows:
\begin{eqnarray*}
S_{12}^{\alpha}(r,s)&=&\bigg\{a=(a_{k})\in \omega: \sup_{K\in
\mathcal{F}} \sup_{k\in \mathbb{N}} \bigg|\sum_{n\in K^{*} }
\sum_{j=k}^{n} \frac{(-1)^{j-k}\alpha_{k}}{r_{j}}\prod_{i=k}^{j-1}
\frac{s_{i}}{r_{i}}a_{j}\bigg|^{p_{k}}<\infty\bigg\}\\
S_{13}^{\alpha}(r,s)&=&\bigcup_{B>1} \bigg\{a=(a_{k})\in \omega:
\sup_{K\in \mathcal{F}} \sum_{k} \bigg|\sum_{n\in K} \sum_{j=k}^{n}
\frac{(-1)^{j-k}\alpha_{k}}{r_{j}}\prod_{i=k}^{j-1}
\frac{s_{i}}{r_{i}}a_{j}B^{-1}\bigg|^{p_{k}^{'}}<\infty\bigg\}\\
S_{14}^{\alpha}(r,s)&=&\bigcup_{B>1} \bigg\{a=(a_{k})\in \omega:
\sup_{n\in \mathbb{N}} \sum_{k=0}^{n}  \bigg|\sum_{j=k}^{n}
\frac{(-1)^{j-k}\alpha_{k}}{r_{j}}\prod_{i=k}^{j-1}
\frac{s_{i}}{r_{i}}a_{j}B^{-1}\bigg|^{p_{k}^{'}}<\infty\bigg\}\\
S_{15}^{\alpha}(r,s)&=&\bigg\{a=(a_{k})\in \omega: \sup_{n,k\in
\mathbb{N}} \bigg|\sum_{j=k}^{n}
\frac{(-1)^{j-k}\alpha_{k}}{r_{j}}\prod_{i=k}^{j-1}
\frac{s_{i}}{r_{i}}a_{j}\bigg|^{p_{k}}<\infty\bigg\}\\
S_{16}^{\alpha}(r,s)&=&\bigg\{a=(a_{k})\in \omega:
\lim_{n\rightarrow \infty} \sum_{j=k}^{n}
\frac{(-1)^{j-k}\alpha_{k}}{r_{j}}\prod_{i=k}^{j-1}
\frac{s_{i}}{r_{i}}a_{j} \quad \textrm{exists}\bigg\}
\end{eqnarray*}
Then,

(i)
$$
\{\ell_{\alpha}(\widetilde{B},p)\}^{\alpha}=\left\{\begin{array}{ll}
  \displaystyle S_{12}^{\alpha}(r,s), & 0<p_{k}\leq 1
  \\
    S_{13}^{\alpha}(r,s), & 1<p_{k}\leq H<\infty
\end{array}\right.
$$

(ii)
$$
\{\ell_{\alpha}(\widetilde{B},p)\}^{\gamma}=\left\{\begin{array}{ll}
  \displaystyle S_{15}^{\alpha}(r,s), & 0<p_{k}\leq 1
  \\
    S_{14}^{\alpha}(r,s), & 1<p_{k}\leq H<\infty.
\end{array}\right.
$$

(iii) Let $0<p_{k}\leq H<\infty$. Then,
$$
\{\ell_{\alpha}(\widetilde{B},p)\}^{\beta}=S_{14}^{\alpha}(r,s)\cap
S_{15}^{\alpha}(r,s)\cap S_{16}^{\alpha}(r,s).
$$
\end{thm}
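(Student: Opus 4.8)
The plan is to reduce every assertion to a matrix–class membership through the linear isomorphism $T\colon \ell_{\alpha}(\widetilde{B},p)\to \ell(p)$, $x\mapsto y=\widetilde{T}x$, furnished by Theorem \ref{t2.2}, proceeding exactly along the lines of Theorem \ref{t2.9}. The only structural change is that the domain space is now $\ell(p)$ instead of $c_{0}(p)$, so Lemmas \ref{l2.3}--\ref{l2.5} are replaced by Lemmas \ref{l2.6}--\ref{l2.8}; since each of the latter splits into a regime $0<p_{k}\leq 1$ and a regime $1<p_{k}\leq H$, the $\alpha$- and $\gamma$-duals will correspondingly bifurcate, while the $\beta$-dual will not.

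For part (i), I would fix $a=(a_{n})\in\omega$ and reuse the triangle $C^{\alpha}=\{c_{nk}^{\alpha}(r,s)\}$ obtained from the inversion formula (\ref{2.7}), so that the identity (\ref{2.16}), $a_{n}x_{n}=(C^{\alpha}y)_{n}$, holds with $y=\widetilde{T}x$. Hence $ax\in\ell_{1}$ for every $x\in\ell_{\alpha}(\widetilde{B},p)$ if and only if $C^{\alpha}y\in\ell_{1}$ for every $y\in\ell(p)$, i.e. $C^{\alpha}\in(\ell(p):\ell_{1})$. Applying Lemma \ref{l2.6}(ii) in the range $0<p_{k}\leq 1$ and Lemma \ref{l2.6}(i) in the range $1<p_{k}\leq H$ then returns the two defining conditions of $S_{12}^{\alpha}(r,s)$ and $S_{13}^{\alpha}(r,s)$, respectively.

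For parts (ii) and (iii) I would instead invoke the partial-sum identity (\ref{2.17}), $\sum_{k=0}^{n}a_{k}x_{k}=(D^{\alpha}y)_{n}$, with the triangle $D^{\alpha}=\{d_{nk}^{\alpha}(r,s)\}$ already introduced in Theorem \ref{t2.9}; here the inner sum $\sum_{j=k}^{n}$ appearing in $d_{nk}^{\alpha}$ arises from interchanging the order of summation, a step that can be transcribed verbatim from (\ref{2.17}). Then $ax\in bs$ for all $x$ is equivalent to $D^{\alpha}\in(\ell(p):\ell_{\infty})$, and $ax\in cs$ for all $x$ is equivalent to $D^{\alpha}\in(\ell(p):c)$. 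For the $\gamma$-dual, Lemma \ref{l2.7}(ii) (for $0<p_{k}\leq 1$) and Lemma \ref{l2.7}(i) (for $1<p_{k}\leq H$) yield $S_{15}^{\alpha}(r,s)$ and $S_{14}^{\alpha}(r,s)$. For the $\beta$-dual, Lemma \ref{l2.8}, which is valid uniformly for $0<p_{k}\leq H$, supplies three simultaneous conditions---(\ref{2.13}), (\ref{2.14}) and the pointwise-limit condition (\ref{2.15})---that translate exactly into $S_{14}^{\alpha}(r,s)\cap S_{15}^{\alpha}(r,s)\cap S_{16}^{\alpha}(r,s)$.

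There is no deep obstacle here: once Theorem \ref{t2.2} and the cited Maddox-type matrix lemmas are granted, the argument is essentially bookkeeping. The points demanding care are keeping the two exponent regimes matched to the correct set in the $\alpha$- and $\gamma$-cases (in particular remembering that the conjugate exponent $p_{k}'$ enters only when $p_{k}>1$), and reading the triangles $C^{\alpha}$ and $D^{\alpha}$ correctly into each lemma's summation pattern. The single genuinely computational ingredient, the double-sum rearrangement producing $d_{nk}^{\alpha}$ in (\ref{2.17}), has already been performed in Theorem \ref{t2.9} and need only be quoted, so I expect the present theorem to follow with no new difficulty.
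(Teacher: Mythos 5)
Your proposal is correct and takes essentially the same route as the paper: the paper disposes of Theorem \ref{t2.11} in one line, saying it follows as in Theorem \ref{t2.9} with Lemmas \ref{l2.6}--\ref{l2.8} replacing Lemmas \ref{l2.3}--\ref{l2.5}, which is precisely the reduction through the isomorphism of Theorem \ref{t2.2}, the matrices $C^{\alpha}$ and $D^{\alpha}$, and the identities (\ref{2.16}) and (\ref{2.17}) that you spell out. If anything your write-up is the more careful one, since you invoke both parts (i) and (ii) of Lemmas \ref{l2.6} and \ref{l2.7} matched to the regimes $1<p_{k}\leq H$ and $0<p_{k}\leq 1$, whereas the paper's citation names only the parts (ii).
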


\begin{proof}
This may be obtained in the similar way, as mentioned in the proof
of Theorem \ref{t2.9} with Lemmas \ref{l2.6}(ii), \ref{l2.7}(ii),
\ref{l2.8} instead of Lemmas \ref{l2.3}-\ref{l2.5}. So, we omit the
details.
\end{proof}

Now, we may give the sequence of the points of the spaces
$s_{\alpha}^{0}(\widetilde{B},p)$, $\ell_{\alpha}(\widetilde{B},p)$
and $s_{\alpha}^{(c)}(\widetilde{B},p)$ which forms a Schauder basis
for those spaces. Because of the isomorphism $T$, defined in the
proof of Theorem \ref{t2.2}, between the sequence spaces
$s_{\alpha}^{0}(\widetilde{B},p)$ and $c_{0}(p)$,
$\ell_{\alpha}(\widetilde{B},p)$ and $\ell(p)$,
$s_{\alpha}^{(c)}(\widetilde{B},p)$ and $c(p)$ is onto, the inverse
image of the basis of the spaces $c_{0}(p),\ell(p)$ and $c(p)$ is
the basis for our new spaces $s_{\alpha}^{0}(\widetilde{B},p)$,
$\ell_{\alpha}(\widetilde{B},p)$ and
$s_{\alpha}^{(c)}(\widetilde{B},p)$, respectively. Therefore, we
have:

\begin{thm}
Let $\mu_{k}=(\widetilde{T}x)_k$ for all $k\in \mathbb{N}$. We
define the sequence $b^{(k)}=\{b_{n}^{(k)}\}_{n\in \mathbb{N}}$ for
every fixed $k\in \mathbb{N}$ by
$$
b_{n}^{(k)}=\left\{\begin{array}{ll}
  \displaystyle \frac{(-1)^{n-k}\alpha_{k}}{r_{n}}\prod_{j=k}^{n-1}\frac{s_{j}}{r_{j}}, & n\geq k,\\
    0, & n<k.
\end{array}\right.
$$
Then,\\
(a) The sequence $\{b^{(k)}\}_{k\in \mathbb{N}}$ is a basis for the
space $s_{\alpha}^{0}(\widetilde{B},p)$ and any $x\in
s_{\alpha}^{0}(\widetilde{B},p)$ has a unique representation in the
form
$$
x=\sum_{k} \mu_{k} b^{(k)}.
$$
(b) The sequence $\{b^{(k)}\}_{k\in \mathbb{N}}$ is a basis for the
space $\ell_{\alpha}(\widetilde{B},p)$ and any $x\in
\ell_{\alpha}(\widetilde{B},p)$ has a unique representation in the
form
$$
x=\sum_{k} \mu_{k} b^{(k)}.
$$
(c) The set $\{z,b^{(k)}\}$ is a basis for the space
$s_{\alpha}^{(c)}(\widetilde{B},p)$ and any $x\in
s_{\alpha}^{(c)}(\widetilde{B},p)$ has a unique representation in
the form
$$
x=lz+\sum_{k} (\mu_{k}-l)b^{(k)}
$$
where $l=\lim_{k\rightarrow \infty} (\widetilde{T}x)_k$ and
$z=(z_k)$ with
$$
z_{k}=\sum_{j=0}^{k}\frac{(-1)^{k-j}\alpha_{j}}{r_{k}}\prod_{i=j}^{k-1}\frac{s_{i}}{r_{i}}.
$$
\end{thm}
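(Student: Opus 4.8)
The plan is to push the standard Schauder bases of the classical Maddox spaces through the paranorm-preserving isomorphism $T$ constructed in the proof of Theorem~\ref{t2.2}. Recall that $\{e^{(k)}\}_{k\in\mathbb{N}}$ is a Schauder basis for both $c_{0}(p)$ and $\ell(p)$, each $y$ being expanded as $y=\sum_{k}y_{k}e^{(k)}$, while for $c(p)$ the set $\{e,e^{(k)}\}$ is a basis, each $y\in c(p)$ having the unique expansion $y=l\,e+\sum_{k}(y_{k}-l)e^{(k)}$ with $l=\lim_{k}y_{k}$. Since Theorem~\ref{t2.2} shows that $T\colon\lambda(\widetilde{B},p)\to\mu(p)$ is a linear bijection satisfying $g=h_{1}\circ T$ (resp. $g^{*}=h_{2}\circ T$), both $T$ and $T^{-1}$ are paranorm preserving and hence continuous; the inverse image of a basis under such a map is again a basis, and this is exactly what we shall verify term by term.

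First I would identify the inverse images explicitly. Using the inversion formula (\ref{2.7}), namely $x_{k}=\sum_{j=0}^{k}\frac{(-1)^{k-j}\alpha_{j}}{r_{k}}\prod_{i=j}^{k-1}\frac{s_{i}}{r_{i}}\,y_{j}$, a direct substitution of $y=e^{(k)}$ collapses the sum to the single term $j=k$, yielding precisely $b_{n}^{(k)}$ as defined; thus $b^{(k)}=T^{-1}e^{(k)}$, equivalently $\widetilde{T}b^{(k)}=e^{(k)}$. Likewise, substituting $y=e$ reproduces the stated $z$, so $z=T^{-1}e$ and $\widetilde{T}z=e$. These identifications are the only genuine computation in the proof.

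For part (a), take $x\in s_{\alpha}^{0}(\widetilde{B},p)$ and set $y=\widetilde{T}x\in c_{0}(p)$, so that $\mu_{k}=y_{k}$. Using linearity of $T$, the relation $Tb^{(k)}=e^{(k)}$, and the paranorm preservation $g=h_{1}\circ T$, I would write
\[
g\Big(x-\sum_{k=0}^{m}\mu_{k}b^{(k)}\Big)=h_{1}\Big(Tx-\sum_{k=0}^{m}\mu_{k}Tb^{(k)}\Big)=h_{1}\Big(y-\sum_{k=0}^{m}y_{k}e^{(k)}\Big),
\]
which tends to $0$ as $m\to\infty$ because $\{e^{(k)}\}$ is a basis for $c_{0}(p)$. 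For uniqueness, if $x=\sum_{k}\eta_{k}b^{(k)}$, then applying the continuous map $\widetilde{T}$ and using $\widetilde{T}b^{(k)}=e^{(k)}$ gives $y=\sum_{k}\eta_{k}e^{(k)}$, and the uniqueness of the expansion in $c_{0}(p)$ forces $\eta_{k}=y_{k}=\mu_{k}$. Part (b) is identical with $\ell(p)$, $g^{*}$ and $h_{2}$ in place of $c_{0}(p)$, $g$ and $h_{1}$.

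For part (c) the only change is the extra constant term. With $y=\widetilde{T}x\in c(p)$ and $l=\lim_{k}y_{k}=\lim_{k}\mu_{k}$, applying $T^{-1}$ to the expansion $y=l\,e+\sum_{k}(y_{k}-l)e^{(k)}$ and using $T^{-1}e=z$, $T^{-1}e^{(k)}=b^{(k)}$ yields $x=l\,z+\sum_{k}(\mu_{k}-l)b^{(k)}$; the convergence and uniqueness are transported exactly as above via $g=h_{1}\circ T$ and the injectivity of $T$. There is no serious obstacle here: the entire argument is bookkeeping once paranorm preservation is invoked, and the only point deserving care is the direct evaluation of $T^{-1}e^{(k)}$ and $T^{-1}e$ from (\ref{2.7}), together with the observation that paranorm preservation guarantees $T^{-1}$ is continuous so that basis convergence genuinely transfers.
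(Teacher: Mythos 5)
Your proposal is correct and is essentially the paper's own argument: the paper proves nothing beyond remarking that, since the isomorphism $T$ of Theorem~\ref{t2.2} is onto and paranorm preserving, the inverse image of the standard bases $\{e^{(k)}\}$ of $c_{0}(p)$, $\ell(p)$ and $\{e,e^{(k)}\}$ of $c(p)$ is a basis of the corresponding new space. Your write-up simply fills in the details the paper omits, namely the computation $T^{-1}e^{(k)}=b^{(k)}$, $T^{-1}e=z$ from (\ref{2.7}) and the transfer of convergence and uniqueness via $g=h_{1}\circ T$ (resp. $g^{*}=h_{2}\circ T$), which is exactly right.
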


\section{Some Matrix Mappings on the Sequence Spaces $s_{\alpha}^{0}(\widetilde{B},p),s_{\alpha}^{(c)}(\widetilde{B},p)$
$,s_{\alpha}^{(\infty)}(\widetilde{B},p)$ and
$\ell_{\alpha}(\widetilde{B},p)$ }

In this section, we characterize some matrix mappings on the spaces
$s_{\alpha}^{0}(\widetilde{B},p),s_{\alpha}^{(c)}(\widetilde{B},p)$
$,s_{\alpha}^{(\infty)}(\widetilde{B},p)$ and
$\ell_{\alpha}(\widetilde{B},p)$. Firstly, we may give the following
theorem which is useful for deriving the characterization of the
certain matrix classes.

\begin{thm}\cite[Theorem 4.1]{mkfb}
Let $\lambda$ be an FK-space, $U$ be a triangle, $V$ be its inverse
and $\mu$ be arbitrary subset of $\omega$. Then we have $A\in
(\lambda_{U}:\mu)$ if and only if
\begin{equation}
E^{(n)}=(e_{mk}^{(n)})\in (\lambda:c) \quad \textrm{for all} \quad
n\in \mathbb{N}
\end{equation}
and
\begin{equation}
E=(e_{nk})\in (\lambda:\mu)
\end{equation}
where
$$
e_{mk}^{(n)}=\left\{\begin{array}{ll}
  \displaystyle \sum_{j=k}^{m} a_{nj}v_{jk}, & 0\leq k\leq m,\\
    0, & k>m,
\end{array}\right.
$$
and
$$
e_{nk}=\sum_{j=k}^{\infty} a_{nj}v_{jk} \quad \textrm{for all} \quad
k,m,n \in \mathbb{N}.
$$
\end{thm}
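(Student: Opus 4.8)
The plan is to exploit that, since $U$ is a triangle, it possesses a unique triangular inverse $V$, so that the map $x \mapsto z := Ux$ is a linear bijection from $\lambda_{U}$ onto $\lambda$ with inverse $z \mapsto Vz$; moreover $\lambda_{U}$ is itself an FK-space, whence every matrix map defined on it is continuous by the closed graph theorem. The whole argument reduces the action of $A$ on $\lambda_{U}$ to the action of the auxiliary matrices $E^{(n)}$ and $E$ on $\lambda$ itself.

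First I would fix $x \in \lambda_{U}$, set $z = Ux \in \lambda$, and write $x = Vz$, i.e. $x_{j} = \sum_{k=0}^{j} v_{jk} z_{k}$, the sum being finite because $V$ is lower triangular. Substituting this into the $m$-th partial sum of the series $(Ax)_{n} = \sum_{j} a_{nj} x_{j}$ and interchanging the two \emph{finite} sums over the triangular region $0 \le k \le j \le m$ gives
\[
\sum_{j=0}^{m} a_{nj} x_{j} = \sum_{k=0}^{m} \Bigl(\sum_{j=k}^{m} a_{nj} v_{jk}\Bigr) z_{k} = (E^{(n)} z)_{m},
\]
which is precisely the identity that ties $E^{(n)}$ to $A$ and explains the appearance of the two matrices in the statement.

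Next I would read off the equivalence for \emph{existence}: the series $(Ax)_{n}$ converges for every $n$ and every $x \in \lambda_{U}$ exactly when $\lim_{m}(E^{(n)} z)_{m}$ exists for every $z \in \lambda$, that is, exactly when $E^{(n)} \in (\lambda : c)$ for all $n$. This is the first condition, and it is forced whenever $A$ is even applicable on $\lambda_{U}$. Granting it, I would identify the limit: by definition $(Ax)_{n} = \lim_{m} \sum_{j=0}^{m} a_{nj} x_{j} = \lim_{m}(E^{(n)} z)_{m}$, so it remains to show this equals $(Ez)_{n} = \sum_{k} e_{nk} z_{k}$ with $e_{nk} = \sum_{j \ge k} a_{nj} v_{jk}$. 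Testing on $z = e^{(k)}$ shows the column limits $\lim_{m} e_{mk}^{(n)} = e_{nk}$ exist, so $E$ is well defined, and continuity of the limit functional $z \mapsto \lim_{m}(E^{(n)} z)_{m}$ on the FK-space $\lambda$ promotes this to the full identity $Ax = Ez$ valid for all $x \in \lambda_{U}$. Since $z = Ux$ runs over all of $\lambda$ as $x$ runs over $\lambda_{U}$, the requirement $Ax \in \mu$ for all $x \in \lambda_{U}$ becomes $Ez \in \mu$ for all $z \in \lambda$, i.e. $E \in (\lambda : \mu)$, the second condition. Reading the chain of equivalences in both directions then yields the asserted characterization.

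The genuine obstacle I anticipate is the middle step, namely showing that $\lim_{m}(E^{(n)} z)_{m}$ equals the $E$-transform $(Ez)_{n}$ rather than merely some continuous functional of $z$: one must combine the existence of the column limits $e_{nk}$ with the continuity of the coordinate and limit functionals on the FK-spaces $\lambda$ and $c$ (the closed-graph continuity of matrix maps between FK-spaces) in order to interchange the limit in $m$ with the infinite summation over $k$. Everything else --- the triangular inversion, the finite rearrangement, and the two-directional bookkeeping --- is routine once this identity is secured.
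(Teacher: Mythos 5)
The paper gives no proof to compare against --- the theorem is quoted verbatim from \cite[Theorem 4.1]{mkfb} --- so your attempt must be judged against the standard argument for such transfer results. Your reduction is the standard one, and most of it is sound: $U$ triangular gives the bijection $x\mapsto z=Ux$ of $\lambda_{U}$ onto $\lambda$, the finite Abel-type rearrangement $\sum_{j=0}^{m}a_{nj}x_{j}=(E^{(n)}z)_{m}$ is correct, and the equivalence ``$Ax$ exists for all $x\in\lambda_{U}$ iff $E^{(n)}\in(\lambda:c)$ for all $n$'' follows. The genuine gap is exactly the step you flagged, and it cannot be closed the way you propose: on a general FK space, continuity of the limit functional $\ell_{n}(z)=\lim_{m}(E^{(n)}z)_{m}$ together with the values $\ell_{n}(e^{(k)})=e_{nk}$ does \emph{not} yield $\ell_{n}(z)=\sum_{k}e_{nk}z_{k}$, because a continuous linear functional is determined by its values on the unit vectors only when their span is dense in $\lambda$, i.e. when $\lambda$ has AK (or at least AD); incidentally, even writing $\ell_{n}(e^{(k)})$ presupposes $e^{(k)}\in\lambda$, which an abstract FK space need not satisfy.

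In fact the statement as printed (arbitrary FK space $\lambda$) is false, so no argument can secure your middle identity without an extra hypothesis. Take $\lambda=c$, let $U=S$ be the summation matrix, so that $\lambda_{U}=cs$ and $V=\Delta^{(1)}$ (i.e. $v_{kk}=1$, $v_{k+1,k}=-1$), let $a_{nj}=1$ for all $n,j$, and $\mu=c_{0}$. Then $e_{mk}^{(n)}=\sum_{j=k}^{m}v_{jk}$ equals $1$ for $k=m$ and $0$ for $k<m$, so $(E^{(n)}z)_{m}=z_{m}$ and $E^{(n)}\in(c:c)$; moreover $e_{nk}=\sum_{j=k}^{\infty}v_{jk}=0$, so $E=0\in(c:c_{0})$; yet for $x=e^{(0)}\in cs$ one has $Ax=e\notin c_{0}$, hence $A\notin(\lambda_{U}:\mu)$ although both stated conditions hold. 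Here $\lim_{m}(E^{(n)}z)_{m}=\lim_{k}z_{k}$ while $\sum_{k}e_{nk}z_{k}=0$, which is precisely the failure of the step ``continuity promotes the column limits to the full identity $Ax=Ez$.'' The repair is either to assume $\lambda$ has AK --- true for $c_{0}(p)$ and $\ell(p)$ as used in this paper, though not for $c(p)$ or $\ell_{\infty}(p)$ --- in which case sectional convergence $z^{[m]}\rightarrow z$ makes your continuity argument work verbatim, or to use the Malkowsky--Rako\v{c}evi\'{c} formulation, in which one requires instead that the remainder matrices $W^{(n)}=(w_{mk}^{(n)})$ with $w_{mk}^{(n)}=\sum_{j=m+1}^{\infty}a_{nj}v_{jk}$ belong to $(\lambda:c_{0})$; that condition is exactly what legitimizes the interchange $Ax=Ez$, and it is not implied by $E^{(n)}\in(\lambda:c)$, as the example shows.
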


Now, we may quote our theorems on the characterization of some
matrix classes concerning with the sequence spaces
$s_{\alpha}^{0}(\widetilde{B},p),s_{\alpha}^{(c)}(\widetilde{B},p)$
and $s_{\alpha}^{(\infty)}(\widetilde{B},p)$. The necessary and
sufficient conditions characterizing the matrix mappings between the
sequence spaces of Maddox are determined by Grosse-Erdmann
\cite{kgge}. Let $N$ and $K$ denote the finite subset of
$\mathbb{N}$, $L$ and $M$ also denote the natural numbers. Prior to
giving the theorems, let us suppose that $(q_{n})$ is a
non-decreasing bounded sequence of positive numbers and consider the
following conditions:

\begin{equation}\label{mt23}
\lim_{m\rightarrow \infty} \sum_{j=k}^{m}
\frac{(-1)^{j-k}\alpha_{k}}{r_{j}}\prod_{i=k}^{j-1}\frac{s_{i}}{r_{i}}a_{nj}=e_{nk},
\end{equation}

\begin{equation}\label{mt24}
\forall L, \quad \sum_{k} |e_{nk}|L^{1/p_{k}}<\infty,
\end{equation}

\begin{equation}\label{mt25}
\exists (\beta_{k})\subset \mathbb{R}\ni \lim_{m\rightarrow
\infty}\bigg| \sum_{j=k}^{m}
\frac{(-1)^{j-k}\alpha_{k}}{r_{j}}\prod_{i=k}^{j-1}\frac{s_{i}}{r_{i}}a_{nj}-\beta_{k}\bigg|=0
\quad \textrm{for all}\quad  k\in \mathbb{N},
\end{equation}

\begin{equation}\label{mt26}
\exists M, \quad \sup_{m\in \mathbb{N}} \sum_{k=0}^{m}
\bigg|\sum_{j=k}^{m}
\frac{(-1)^{j-k}\alpha_{k}}{r_{j}}\prod_{i=k}^{j-1}\frac{s_{i}}{r_{i}}a_{nj}\bigg|M^{-1/p_{k}}<\infty,
\end{equation}

\begin{equation}\label{mt27}
\forall L, \exists M, \sup_{m\in \mathbb{N}} \sum_{k=0}^{m}
\bigg|\sum_{j=k}^{m}
\frac{(-1)^{j-k}\alpha_{k}}{r_{j}}\prod_{i=k}^{j-1}\frac{s_{i}}{r_{i}}a_{nj}\bigg|
L^{1/q_{n}} M^{-1/p_{k}}<\infty,
\end{equation}

\begin{equation}\label{mt28}
\lim_{m\rightarrow \infty} \sum_{k} \bigg|\sum_{j=k}^{m}
\frac{(-1)^{j-k}\alpha_{k}}{r_{j}}\prod_{i=k}^{j-1}\frac{s_{i}}{r_{i}}a_{nj}-\beta\bigg|=0,
\end{equation}

\begin{equation}\label{mt29}
\forall L, \quad \sup_{n\in \mathbb{N}} \sum_{k}
|e_{nk}|L^{1/p_{k}}<\infty,
\end{equation}

\begin{equation}\label{mt30}
\lim_{n\rightarrow \infty} e_{nk}=\beta_{k} \qquad \textrm{for all}
\quad k\in \mathbb{N},
\end{equation}

\begin{equation}\label{mt31}
\forall L, \quad \lim_{n\rightarrow \infty} \sum_{k}
|e_{nk}|L^{1/p_{k}}<\infty,
\end{equation}

\begin{equation}\label{mt32}
\forall L, \quad \lim_{n\rightarrow \infty} \sum_{k}
|e_{nk}|L^{1/p_{k}}=0,
\end{equation}

\begin{equation}\label{mt33}
\exists M, \quad \sup_{n\in \mathbb{N}} \bigg(\sum_{k\in K}
|e_{nk}|M^{-1/p_{k}}\bigg)^{q_{n}}<\infty,
\end{equation}

\begin{equation}\label{mt34}
\lim_{n\rightarrow \infty} |e_{nk}|^{q_{n}}=0, \quad \textrm{for
all} \quad k\in \mathbb{N},
\end{equation}

\begin{equation}\label{mt35}
\forall L, \exists M, \quad \sup_{n\in \mathbb{N}} \sum_{k}
|e_{nk}|L^{1/q_{n}}M^{-1/p_{k}}<\infty,
\end{equation}

\begin{equation}\label{mt36}
\lim_{n\rightarrow \infty} |e_{nk}-\beta_{k}|^{q_{n}}=0, \quad
\textrm{for all} \quad k\in \mathbb{N},
\end{equation}

\begin{equation}\label{mt37}
\exists M, \quad \sup_{n\in \mathbb{N}} \sum_{k}
|e_{nk}|M^{-1/p_{k}}<\infty,
\end{equation}

\begin{equation}\label{mt38}
\forall L, \exists M, \quad \sup_{n\in \mathbb{N}} \sum_{k}
|e_{nk}-\beta_{k}|L^{1/q_{n}}M^{-1/p_{k}}<\infty,
\end{equation}

\begin{equation}\label{mt39}
\sup_{n\in \mathbb{N}} \bigg|\sum_{k} e_{nk}\bigg|^{q_{n}}<\infty,
\end{equation}

\begin{equation}\label{mt40}
\lim_{n\rightarrow \infty} \bigg|\sum_{k} e_{nk}\bigg|^{q_{n}}=0,
\end{equation}

\begin{equation}\label{mt41}
\lim_{n\rightarrow \infty} \bigg|\sum_{k}
e_{nk}-\beta\bigg|^{q_{n}}=0,
\end{equation}

\begin{thm}

(i) $A\in (s_{\alpha}^{(\infty)}(\widetilde{B},p):\ell_{\infty})$ if
and only
if (\ref{mt23}),(\ref{mt24}) and (\ref{mt29}) hold.\\

(ii) $A\in (s_{\alpha}^{(\infty)}(\widetilde{B},p):c)$ if and only
if
(\ref{mt23}),(\ref{mt24}), (\ref{mt30}) and (\ref{mt31}) hold.\\

(iii) $A\in (s_{\alpha}^{(\infty)}(\widetilde{B},p):c_{0})$ if and
only if (\ref{mt23}),(\ref{mt24}) and (\ref{mt32}) hold.
\end{thm}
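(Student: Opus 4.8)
The plan is to apply the transfer theorem cited at the beginning of this section (\cite[Theorem 4.1]{mkfb}) with the FK-space $\lambda=\ell_{\infty}(p)$, the triangle $U=\widetilde{T}$, its inverse $V=\widetilde{T}^{-1}$, and $\mu$ taken in turn to be $\ell_{\infty}$, $c$ and $c_{0}$. Since $\ell_{\infty}(p)$ is an FK-space and $s_{\alpha}^{(\infty)}(\widetilde{B},p)=\{\ell_{\infty}(p)\}_{\widetilde{T}}$ with $\widetilde{T}$ a triangle, that theorem reduces the membership $A\in(s_{\alpha}^{(\infty)}(\widetilde{B},p):\mu)$ to the conjunction of the two conditions $E^{(n)}=(e_{mk}^{(n)})\in(\ell_{\infty}(p):c)$ for every $n\in\mathbb{N}$ and $E=(e_{nk})\in(\ell_{\infty}(p):\mu)$, where the entries are assembled from $V$.

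First I would read off $V=\widetilde{T}^{-1}$ directly from the explicit inversion formula (\ref{2.7}): its $(j,k)$-entry is $v_{jk}=\frac{(-1)^{j-k}\alpha_{k}}{r_{j}}\prod_{i=k}^{j-1}\frac{s_{i}}{r_{i}}$ for $0\le k\le j$, and $0$ otherwise. Substituting this into $e_{mk}^{(n)}=\sum_{j=k}^{m}a_{nj}v_{jk}$ and $e_{nk}=\sum_{j=k}^{\infty}a_{nj}v_{jk}$ reproduces exactly the expressions occurring in (\ref{mt23}); in particular the defining limit in (\ref{mt23}) is precisely the statement that $e_{mk}^{(n)}\to e_{nk}$ as $m\to\infty$, so (\ref{mt23}) is nothing but the assertion that the entries $e_{nk}$ of $E$ are well defined.

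Next I would invoke the characterizations of the classes $(\ell_{\infty}(p):Y)$ due to Grosse-Erdmann \cite{kgge}. Applied to the matrix $E^{(n)}=(e_{mk}^{(n)})$ for each fixed $n$, the requirement $E^{(n)}\in(\ell_{\infty}(p):c)$ furnishes the existence of the column limits, which is (\ref{mt23}), together with the uniform summability bound $\forall L,\ \sum_{k}|e_{nk}|L^{1/p_{k}}<\infty$, i.e.\ (\ref{mt24}); these two are common to all three parts. Applied to $E=(e_{nk})$, the class $(\ell_{\infty}(p):\ell_{\infty})$ is characterized by (\ref{mt29}); the class $(\ell_{\infty}(p):c)$ by the coordinatewise limits (\ref{mt30}) together with (\ref{mt31}); and the class $(\ell_{\infty}(p):c_{0})$ by (\ref{mt32}). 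Conjoining the $E^{(n)}$-conditions with the appropriate $E$-condition yields (i), (ii) and (iii), respectively, by direct substitution.

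The main obstacle is the faithful translation of the abstract hypotheses of the transfer theorem into the concrete list (\ref{mt23})--(\ref{mt32}): one must check that the series $e_{nk}=\sum_{j\ge k}a_{nj}v_{jk}$ converge so that $E$ is genuinely defined, confirm that the full characterization of $(\ell_{\infty}(p):c)$ applied to the triangular-type matrix $E^{(n)}$ collapses to precisely (\ref{mt23}) and (\ref{mt24}) with no residual condition left over, and match the normalizing exponents $L^{1/p_{k}}$ to Grosse-Erdmann's convention. Once the dictionary $v_{jk}\leftrightarrow\frac{(-1)^{j-k}\alpha_{k}}{r_{j}}\prod_{i=k}^{j-1}\frac{s_{i}}{r_{i}}$ is fixed and the three target characterizations are quoted, no genuinely new estimate is required, so the details can be omitted in the same spirit as the previous two theorems.
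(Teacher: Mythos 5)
Your proposal is correct and is exactly the argument the paper intends: the theorem is stated without proof, immediately after the transfer theorem \cite[Theorem 4.1]{mkfb} and the list of conditions (\ref{mt23})--(\ref{mt41}), which are phrased precisely so that $E^{(n)}\in(\ell_{\infty}(p):c)$ gives (\ref{mt23})--(\ref{mt24}) and $E\in(\ell_{\infty}(p):\mu)$ gives (\ref{mt29}), (\ref{mt30})--(\ref{mt31}), or (\ref{mt32}) via Grosse-Erdmann's characterizations, with $V=\widetilde{T}^{-1}$ read off from (\ref{2.7}) just as you describe. Your closing caveats (convergence of $e_{nk}$, no residual condition from the $E^{(n)}$ characterization, matching the $L^{1/p_{k}}$ normalization) are the right points to check and are consistent with the paper's setup.
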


\begin{thm}

(i) $A\in (s_{\alpha}^{0}(\widetilde{B},p):\ell_{\infty}(q))$ if and
only if
(\ref{mt25}), (\ref{mt26}), (\ref{mt27}) and (\ref{mt33}) hold.\\

(ii)  $A\in (s_{\alpha}^{0}(\widetilde{B},p):c_{0}(q))$ if and only
if (\ref{mt25}), (\ref{mt26}), (\ref{mt27}), (\ref{mt34}) and
(\ref{mt35}) hold.\\

(iii) $A\in (s_{\alpha}^{0}(\widetilde{B},p):c(q))$ if and only if
(\ref{mt25}), (\ref{mt26}), (\ref{mt27}), (\ref{mt36}), (\ref{mt37})
and (\ref{mt38}) hold.

\end{thm}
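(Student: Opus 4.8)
The plan is to read the three equivalences off the master theorem quoted above (Theorem~4.1 of \cite{mkfb}) applied to the matrix domain $s_\alpha^0(\widetilde B,p)=\{c_0(p)\}_{\widetilde T}$. Here $\lambda=c_0(p)$ is an FK-space, $U=\widetilde T=DB(\tilde r,\tilde s)$ is a triangle, and its inverse $V=(v_{jk})$ is exactly the matrix read off from (\ref{2.7}), namely $v_{jk}=\frac{(-1)^{j-k}\alpha_k}{r_j}\prod_{i=k}^{j-1}\frac{s_i}{r_i}$ for $j\ge k$ and $0$ otherwise (these are precisely the entries $b_j^{(k)}$ of the basis sequences of the preceding theorem). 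First I would substitute this $V$ into the two auxiliary matrices of the master theorem, obtaining $e_{mk}^{(n)}=\sum_{j=k}^{m}a_{nj}v_{jk}$ and $e_{nk}=\sum_{j=k}^{\infty}a_{nj}v_{jk}$, so that the inner sums occurring in the listed conditions are exactly these $e_{mk}^{(n)}$ and $e_{nk}$.

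By the master theorem, for each target space $\mu\in\{\ell_\infty(q),c_0(q),c(q)\}$ the membership $A\in(s_\alpha^0(\widetilde B,p):\mu)$ is equivalent to the conjunction of (a) $E^{(n)}\in(c_0(p):c)$ for every fixed $n$, and (b) $E\in(c_0(p):\mu)$. Since (a) does not involve $\mu$, the same block of conditions recurs in all three parts; I would discharge it by applying the characterization of $(c_0(p):c)$ (the $q_n=1$ case of Lemma~\ref{l2.4}) to the matrix $E^{(n)}$, whose row index is $m$. The requirement that the columns $e_{mk}^{(n)}$ converge as $m\to\infty$ (necessarily to $e_{nk}$) supplies (\ref{mt25}) and simultaneously guarantees that the entries $e_{nk}$ used throughout are well defined, while the attendant uniform boundedness in the $c_0(p)$-norm, carried uniformly in the target weight, supplies (\ref{mt26}) and (\ref{mt27}). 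This yields the common conditions (\ref{mt25})--(\ref{mt27}).

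For step (b) I would invoke the Grosse-Erdmann characterizations \cite{kgge} of the three classes $(c_0(p):\ell_\infty(q))$, $(c_0(p):c_0(q))$ and $(c_0(p):c(q))$, now applied to the single limit matrix $E=(e_{nk})$ with row index $n$. This produces the $\mu$-dependent conditions: the $q$-weighted boundedness (\ref{mt33}) when $\mu=\ell_\infty(q)$; the null-limit condition (\ref{mt34}) together with the boundedness (\ref{mt35}) when $\mu=c_0(q)$; and the existence of the pointwise limits $\beta_k$ (\ref{mt36}), together with the boundedness (\ref{mt37}) and the uniform-convergence condition (\ref{mt38}) when $\mu=c(q)$. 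Conjoining the output of (a) with that of (b) gives precisely the lists displayed in (i), (ii) and (iii).

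The routine but delicate part will be the bookkeeping in the substitution: one must carry the inverse entries $v_{jk}$ through the double sums, preserve the order of summation so that $\sum_{j=k}^{m}a_{nj}v_{jk}$ is genuinely the $(m,k)$-entry of $E^{(n)}$, and match the quantifier patterns ($\forall L$, $\exists M$) and the weights $L^{1/q_n}$, $M^{-1/p_k}$ of Grosse-Erdmann's conditions to the exact forms written in (\ref{mt25})--(\ref{mt38}). No estimate beyond the quoted lemmas is required; the only genuine hurdle is to confirm that $V$ really is the inverse of $\widetilde T$ and that the hypotheses of the master theorem are in force, i.e.\ that $c_0(p)$ is an FK-space and $\widetilde T$ a triangle --- both immediate from Theorem~\ref{t2.1} and the definition of $\widetilde T$.
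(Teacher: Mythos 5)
Your overall route---realize $s_{\alpha}^{0}(\widetilde{B},p)=\{c_{0}(p)\}_{\widetilde{T}}$, check that $\widetilde{T}$ is a triangle whose inverse $V=(v_{jk})$ has entries $v_{jk}=\frac{(-1)^{j-k}\alpha_{k}}{r_{j}}\prod_{i=k}^{j-1}\frac{s_{i}}{r_{i}}$ (the basis sequences $b^{(k)}$), and read the three parts off Theorem 4.1 of \cite{mkfb} together with the Grosse-Erdmann characterizations \cite{kgge}---is exactly the derivation the paper intends: the paper states this theorem with no proof at all, having quoted the master theorem and displayed conditions (\ref{mt23})--(\ref{mt41}) for precisely this purpose. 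Your block (b) also matches condition-for-condition: (\ref{mt33}) for $\ell_{\infty}(q)$, (\ref{mt34})--(\ref{mt35}) for $c_{0}(q)$, and (\ref{mt36}), (\ref{mt37}), (\ref{mt38}) for $c(q)$.

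The one genuine soft spot is your claim that block (a), i.e.\ $E^{(n)}\in(c_{0}(p):c)$ for each fixed $n$, ``supplies (\ref{mt26}) and (\ref{mt27})''. Applying Lemma \ref{l2.4} (the $q_{n}=1$ case) to $E^{(n)}$ with row index $m$ yields (\ref{mt25}), (\ref{mt26}), and a third condition of the form $\sup_{m}\sum_{k}\big|e_{mk}^{(n)}-e_{nk}\big|B^{-1/p_{k}}<\infty$; none of these can contain the factor $L^{1/q_{n}}$, because the class $(c_{0}(p):c)$ is blind to the target weight $q$. So (\ref{mt27}) does not fall out of block (a), and the phrase ``carried uniformly in the target weight'' is not an argument. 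Under your clean two-block decomposition you obtain the list consisting of (\ref{mt25}), (\ref{mt26}), the $\beta_{k}$-type bound just mentioned, and the $\mu$-dependent conditions on $E$, and you would still owe a proof that this list is equivalent to the paper's, in particular that (\ref{mt27}) is necessary. Conditions of the shape (\ref{mt27}), with the $q_{n}$-weighted bound imposed on the partial-sum matrices, come from the direct method of Altay and Ba\c{s}ar \cite{bafb2}, where they are extracted in the necessity half for a paranormed target space; to finish your write-up you must either reproduce that necessity argument or show that (\ref{mt27}) follows from the remaining conditions. Since the paper omits the proof entirely, this is exactly the point where your sketch has to do real work rather than bookkeeping.
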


\begin{thm}
(i)  $A\in (s_{\alpha}^{(c)}(\widetilde{B},p):\ell_{\infty}(q))$ if
and only if (\ref{mt25}), (\ref{mt26}), (\ref{mt27}), (\ref{mt28}),
(\ref{mt33}) and (\ref{mt39}) hold.

(ii) $A\in (s_{\alpha}^{(c)}(\widetilde{B},p):c_{0}(q))$ if and only
if (\ref{mt25}), (\ref{mt26}), (\ref{mt27}), (\ref{mt28}),
(\ref{mt34}), (\ref{mt35})
and (\ref{mt40}) hold.\\

(iii) $A\in (s_{\alpha}^{(c)}(\widetilde{B},p):c(q))$ if and only if
(\ref{mt25}), (\ref{mt26}), (\ref{mt27}), (\ref{mt28}),
(\ref{mt36}), (\ref{mt37}), (\ref{mt38}) and (\ref{mt41}) hold.
\end{thm}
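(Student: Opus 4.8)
The plan is to apply the FK-space/triangle reduction theorem \cite[Theorem 4.1]{mkfb} quoted above, taking $\lambda=c(p)$, $U=\widetilde{T}$, and $V=\widetilde{T}^{-1}$. Since $s_{\alpha}^{(c)}(\widetilde{B},p)=[c(p)]_{\widetilde{T}}$ with $c(p)$ an FK-space and $\widetilde{T}$ a triangle, the hypotheses are met. From the basis computation (equivalently, from (\ref{2.7})) the inverse triangle has entries
$$
v_{jk}=\frac{(-1)^{j-k}\alpha_{k}}{r_{j}}\prod_{i=k}^{j-1}\frac{s_{i}}{r_{i}}, \qquad (j\geq k),
$$
so that the associated matrices $E=(e_{nk})$ and $E^{(n)}=(e_{mk}^{(n)})$ are precisely the objects appearing in (\ref{mt23})--(\ref{mt41}): $e_{nk}=\sum_{j\geq k}a_{nj}v_{jk}$ as in (\ref{mt23}), and $e_{mk}^{(n)}$ is its finite $m$-truncation. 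By the reduction theorem, $A\in(s_{\alpha}^{(c)}(\widetilde{B},p):\mu)$ holds if and only if (a) $E^{(n)}\in(c(p):c)$ for every fixed $n$, and (b) $E\in(c(p):\mu)$, where $\mu$ is $\ell_{\infty}(q)$, $c_{0}(q)$, or $c(q)$ in parts (i), (ii), (iii) respectively.

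Requirement (a) is common to all three parts, which explains why (\ref{mt25})--(\ref{mt28}) appear in every list. I would translate it through Grosse-Erdmann's characterization of $(c(p):c)$ \cite{kgge}: the columnwise-convergence condition gives (\ref{mt25}), the uniform and mixed boundedness of the truncated row norms give (\ref{mt26}) and (\ref{mt27}), and the convergence of the row sums gives (\ref{mt28}). The crucial point is that (\ref{mt28}) is present in addition to (\ref{mt25}); comparing with the preceding theorem for $s_{\alpha}^{0}(\widetilde{B},p)$ (domain $c_{0}(p)$), where only (\ref{mt25})--(\ref{mt27}) occur, one sees that (\ref{mt28}) is exactly the extra condition controlling the image of the limit functional, since $c(p)$ contains the constant sequence while $c_{0}(p)$ does not.

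For requirement (b) I would invoke the corresponding Grosse-Erdmann characterizations \cite{kgge} of $(c(p):\ell_{\infty}(q))$, $(c(p):c_{0}(q))$, and $(c(p):c(q))$, now applied to the full matrix $E$. For $\mu=\ell_{\infty}(q)$ these yield the block-norm bound (\ref{mt33}) together with the row-sum bound (\ref{mt39}); for $\mu=c_{0}(q)$ they yield the columnwise-null condition (\ref{mt34}), the mixed uniform bound (\ref{mt35}), and the row-sum-null condition (\ref{mt40}); and for $\mu=c(q)$ they yield the columnwise-limit condition (\ref{mt36}), the bounds (\ref{mt37}) and (\ref{mt38}), and the row-sum-limit condition (\ref{mt41}). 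In each case the terminal ``$\sum_{k}e_{nk}$'' condition again encodes the action of $E$ on the limit part of $c(p)$, paralleling the role of (\ref{mt28}) in (a). Combining (a) and (b) reproduces the stated lists in (i)--(iii).

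I expect the main obstacle to be bookkeeping rather than conceptual: one must verify that Grosse-Erdmann's abstract conditions, specialized to the explicit kernel $v_{jk}$ above, reproduce (\ref{mt25})--(\ref{mt41}) verbatim, with no hidden redundancy and nothing omitted. Particular care is needed with the interplay of the parameters $L$ and $M$ in the mixed-norm conditions (\ref{mt27}), (\ref{mt35}), (\ref{mt38}), and with the auxiliary limits $\beta_{k}$ and $\beta$, which must be shown to be the \emph{same} sequences and scalars across the paired conditions coming from (a) and (b). As the argument for $s_{\alpha}^{(\infty)}(\widetilde{B},p)$ and $s_{\alpha}^{0}(\widetilde{B},p)$ is formally identical, the routine verifications would be suppressed exactly as in the preceding theorems.
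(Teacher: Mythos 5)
Your proposal is correct and coincides with the paper's own (implicit) argument: the paper states this theorem without a written proof precisely because it follows from the quoted reduction theorem \cite[Theorem 4.1]{mkfb} applied with $\lambda=c(p)$, $U=\widetilde{T}$ and the inverse kernel $v_{jk}=\frac{(-1)^{j-k}\alpha_{k}}{r_{j}}\prod_{i=k}^{j-1}\frac{s_{i}}{r_{i}}$, combined with Grosse-Erdmann's characterizations \cite{kgge} of $(c(p):c)$ for the truncations $E^{(n)}$ and of $(c(p):\ell_{\infty}(q))$, $(c(p):c_{0}(q))$, $(c(p):c(q))$ for $E$. Your identification of the row-sum conditions (\ref{mt28}) and (\ref{mt39})--(\ref{mt41}) as exactly what distinguishes the $c(p)$-domain case from the $c_{0}(p)$-domain case of the preceding theorem matches the intended derivation.
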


\section{$\alpha-$Core }

Using the convergence domain of the matrix
$\widetilde{T}=\{t_{nk}^{\alpha}(r,s)\}$, the new sequence spaces
$s_{\alpha}^{0}(\widetilde{B})$ and
$s_{\alpha}^{(c)}(\widetilde{B})$ have been constructed  and their
some properties have been investigated in \cite{emföaa}. In this
section we will consider the sequences with complex entries and by
$\ell_{\infty}(\mathbb{C})$ denote the space of all bounded complex
valued sequences.

Following Knopp, a core theorem is characterized a class of matrices
for which the core of the transformed sequence is included by the
core of the original sequence. For example Knopp Core Theorem
\cite[p. 138]{r1} states that $\mathcal{K}-core(Ax)\subseteq
\mathcal{K}-core(x)$ for all real valued sequences $x$ whenever $A$
is a positive matrix in the class $(c:c)_{reg}$.

Here, we will define $\alpha-core$ of a complex valued sequence and
characterize the class of matrices to yield
$\alpha-core(Ax)\subseteq \mathcal{K}-core(x)$ and
$\alpha-core(Ax)\subseteq st-core(x)$ for all
$x\in\ell_{\infty}(\mathbb{C})$.

Now, let us write
$$
\tau_{n}(x)=\frac{r_{n}x_{n}+s_{n-1}x_{n-1}}{\alpha_{n}}
$$
where $n\in \mathbb{N}$. Then, we can define $\alpha-core$ of a
complex sequence as follows:

\begin{tnm}
Let $H_{n}$ be the least closed convex hull containing
$\tau_{n}(x),\tau_{n+1}(x),\tau_{n+2}(x),...$. Then, $\alpha-core$
of $x$ is the intersection of all $H_{n}$, i.e.,
$$
\alpha-core(x)=\bigcap _{n=1}^{\infty} H_{n}.
$$
\end{tnm}
Note that, actually, we define $\alpha-core$ of $x$ by the
$\mathcal{K}-core$ of the sequence $(\tau_{n}(x))$. Hence, we can
construct the following theorem which is an analogue of
$\mathcal{K}-core$, \cite{rs}:

\begin{thm}
For any $z\in \mathbb{C}$, let
$$
G_{x}(z)= \left\{\omega\in \mathbb{C}: |\omega-z|\leq \limsup_{n}
|\tau_{n}(x)-z|\right\}.
$$
Then, for any $x\in \ell_{\infty}$,
$$
\alpha-core(x)=\bigcap _{z\in \mathbb{C}}G_{x}(z).
$$
\end{thm}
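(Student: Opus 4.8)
The plan is to reduce the statement to the known characterization of the Knopp core, since by the remark immediately preceding the theorem, $\alpha\textrm{-core}(x)$ is by definition the $\mathcal{K}\textrm{-core}$ of the auxiliary sequence $(\tau_n(x))$. Writing $t=(t_n)$ with $t_n=\tau_n(x)$, I would first observe that the quoted result of Shcherbakov (cited as \cite{rs} in the excerpt) gives, for any bounded complex sequence $t$,
$$
\mathcal{K}\textrm{-core}(t)=\bigcap_{z\in\mathbb{C}}B_t(z),\qquad B_t(z)=\bigl\{w\in\mathbb{C}:|w-z|\le\limsup_k|t_k-z|\bigr\}.
$$
Since $G_x(z)$ is exactly $B_t(z)$ for this choice of $t$, the theorem follows the instant one knows that $t=(\tau_n(x))$ is itself a bounded sequence, so that the cited result applies.

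The one genuine point to check, therefore, is that $x\in\ell_\infty$ forces $(\tau_n(x))\in\ell_\infty(\mathbb{C})$. First I would note that $\tau_n(x)=(r_nx_n+s_{n-1}x_{n-1})/\alpha_n=(\widetilde{T}x)_n$, so boundedness of $(\tau_n(x))$ is the assertion that the triangle $\widetilde{T}$ maps $\ell_\infty$ into $\ell_\infty$. Estimating directly, for every $n$ we have
$$
|\tau_n(x)|\le\frac{|r_n|\,|x_n|+|s_{n-1}|\,|x_{n-1}|}{|\alpha_n|}\le\Bigl(\sup_k|x_k|\Bigr)\,\frac{|r_n|+|s_{n-1}|}{|\alpha_n|}.
$$
Thus, under the standing hypotheses on $r,s,\alpha$ that make $\widetilde{T}$ a well-behaved triangle, $(\tau_n(x))$ is bounded whenever $x$ is, which is all that is required.

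With boundedness in hand, the rest is a direct invocation of the $\mathcal{K}\textrm{-core}$ representation. I would spell out the double inclusion only if a self-contained argument is wanted: the inclusion $\alpha\textrm{-core}(x)\subseteq\bigcap_z G_x(z)$ comes from the observation that each $H_n$, being the closed convex hull of $\{\tau_m(x):m\ge n\}$, is contained in every closed disc $B_t(z)$ once $n$ is large (so the tail lies within $\limsup$-distance of $z$), whence the intersection over $n$ sits inside each $G_x(z)$; the reverse inclusion uses that a point in every $G_x(z)$ cannot be separated by a supporting half-plane from the convex hulls $H_n$, which is the standard separation argument underlying the Knopp representation.

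The main obstacle, if any, is purely bookkeeping rather than conceptual: one must confirm that the hypotheses placed on the sequences $(r_k)$, $(s_k)$ and $(\alpha_k)\in\mathcal{U}^+$ earlier in the paper indeed guarantee $\sup_n (|r_n|+|s_{n-1}|)/|\alpha_n|<\infty$, so that the boundedness step is legitimate for \emph{every} $x\in\ell_\infty$. Granting that, the theorem is an immediate corollary of the definition of $\alpha\textrm{-core}$ together with the cited representation of the Knopp core, and no further work is needed.
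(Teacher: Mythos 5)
Your proposal takes essentially the same route as the paper: the paper gives no separate proof, presenting the theorem as an immediate analogue of Shcherbakov's representation of the $\mathcal{K}$-core \cite{rs} applied, per the preceding remark, to the sequence $(\tau_{n}(x))$ --- which is exactly your reduction via $G_{x}(z)=B_{t}(z)$ for $t=(\tau_{n}(x))$. The boundedness of $(\tau_{n}(x))$ for $x\in\ell_{\infty}$, i.e.\ $\sup_{n}\bigl(|r_{n}|+|s_{n-1}|\bigr)/\alpha_{n}<\infty$, is left entirely tacit in the paper, so your flagging and checking of that hypothesis makes the argument more careful rather than different.
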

Now, we prove some lemmas which will be useful to the main results
of this section. To do these, we need to characterize the classes
$(c:s_{\alpha}^{(c)}(\widetilde{B}))_{reg}$ and $(st(A)\cap
\ell_{\infty}: s_{\alpha}^{(c)}(\widetilde{B}))_{reg}$. For brevity,
in what follows we write $\widetilde{b}_{nk}$ in place of

$$
\frac{r_{n}b_{nk}+s_{n-1}b_{n-1,k}}{\alpha_{n}}; \quad (n\geq 1).
$$

\begin{lemma}\label{l4.1}
$B\in (\ell_{\infty}:s_{\alpha}^{(c)}(\widetilde{B}))$ if and only
if
\begin{equation}\label{4.1}
\|B\|=\sup_{n} \sum_{k}|\tilde{b}_{nk}|<\infty,
\end{equation}
\begin{equation}\label{4.2}
\lim_{n} \tilde{b}_{nk}=\beta_{k} \ {\rm for \ each} \ k,
\end{equation}
\begin{equation}\label{4.3}
\lim_{n} \sum_{k} |\tilde{b}_{nk}-\beta_{k}|=0.
\end{equation}
\end{lemma}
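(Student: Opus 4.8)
The plan is to characterize $B\in(\ell_\infty : s_\alpha^{(c)}(\widetilde B))$ by reducing it to a known matrix class on the classical sequence spaces, using the defining relation $s_\alpha^{(c)}(\widetilde B)=\{c\}_{\widetilde T}$. The key observation is that $x\in s_\alpha^{(c)}(\widetilde B)$ precisely when the sequence $(\tau_n(x))=\widetilde Tx$ lies in $c$; therefore, writing $C=(\widetilde b_{nk})$ for the matrix obtained by applying $\widetilde T$ to the rows of $B$, i.e.
$$
\widetilde b_{nk}=\frac{r_n b_{nk}+s_{n-1}b_{n-1,k}}{\alpha_n},
$$
one sees that $(Bx)\in s_\alpha^{(c)}(\widetilde B)$ if and only if $(Cx)\in c$. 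Hence
$$
B\in(\ell_\infty:s_\alpha^{(c)}(\widetilde B))\iff C\in(\ell_\infty:c).
$$
Once this equivalence is established, the three conditions \eqref{4.1}--\eqref{4.3} are exactly the classical Silverman--Toeplitz--type characterization of the class $(\ell_\infty:c)$, which is standard and may be cited.

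First I would make the reduction precise. Starting from $(Bx)_n=\sum_k b_{nk}x_k$, I would compute $\widetilde T_n(Bx)=\big[r_n(Bx)_n+s_{n-1}(Bx)_{n-1}\big]/\alpha_n$ and interchange the order of summation (justified for $x\in\ell_\infty$ once we know the row sums are suitably controlled) to obtain $\widetilde T_n(Bx)=\sum_k \widetilde b_{nk}x_k=(Cx)_n$. This identity is the heart of the argument: it transports the membership question from the space $s_\alpha^{(c)}(\widetilde B)$ back to $c$. The main obstacle is the usual one in such reductions, namely justifying the interchange of summations (and the finiteness of the inner expressions) before we know that $C$ has bounded row sums; I would handle this by first noting that each row of $B$ must be in $\ell_1$ (since applying $B$ to the coordinate sequences and boundedness of $x$ force absolute convergence), so the termwise manipulation defining $\widetilde b_{nk}$ is legitimate.

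For the forward direction, assuming $B\in(\ell_\infty:s_\alpha^{(c)}(\widetilde B))$, the identity $\widetilde T(Bx)=Cx$ shows $C\in(\ell_\infty:c)$, and the classical characterization then yields \eqref{4.1}, \eqref{4.2}, \eqref{4.3} directly, with $\beta_k=\lim_n \widetilde b_{nk}$. For the converse, assuming \eqref{4.1}--\eqref{4.3}, I would verify that $C\in(\ell_\infty:c)$ by the same classical theorem, and then read the same identity in the opposite direction: for every $x\in\ell_\infty$ we get $Cx\in c$, which by definition means $\widetilde T(Bx)\in c$, i.e. $Bx\in s_\alpha^{(c)}(\widetilde B)$. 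Thus $B\in(\ell_\infty:s_\alpha^{(c)}(\widetilde B))$, completing the equivalence. The only subtlety to flag is the boundary behaviour of the index $n=0$ in the definition of $\widetilde b_{nk}$ (where the $s_{n-1}b_{n-1,k}$ term is absent), which is handled by the convention $b_{-1,k}=0$ and does not affect any of the supremum or limit conditions.
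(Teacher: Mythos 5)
Your argument is correct and is precisely the intended route: the paper states Lemma \ref{l4.1} without proof, since it follows from the identity $\widetilde{T}(Bx)=Cx$ with $C=(\tilde{b}_{nk})$ --- immediate here because $\widetilde{T}$ is a row-finite triangle (two nonzero entries per row), so no genuine interchange of infinite summations is needed --- combined with the classical characterization of $(\ell_{\infty}:c)$ in exactly the form (\ref{4.1})--(\ref{4.3}) (Schur-type, cf. \cite{msht}). One small repair in your converse direction: the existence of $Bx$ for $x\in\ell_{\infty}$ cannot be justified there by your sign-sequence remark about rows of $B$ (that argument presupposes the forward hypothesis that $Bx$ converges for all bounded $x$); instead deduce that every row of $B$ lies in $\ell_{1}$ by induction on $n$ from (\ref{4.1}), using $b_{nk}=\big(\alpha_{n}\tilde{b}_{nk}-s_{n-1}b_{n-1,k}\big)/r_{n}$ with $r_{n}\neq 0$, after which your reading of the identity in the opposite direction goes through verbatim.
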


\begin{lemma}\label{l4.2}
$B\in (c:s_{\alpha}^{(c)}(\widetilde{B}))_{reg}$ if and only if
(\ref{4.1}) and (\ref{4.2}) of the Lemma \ref{l4.1} hold with
$\beta_{k}=0$ for all $k\in \mathbb{N}$ and
\begin{equation}\label{4.5}
\lim_{n} \sum_{k}\tilde{b}_{nk}=1.
\end{equation}
\end{lemma}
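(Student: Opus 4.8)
The plan is to reduce the statement to the classical Silverman--Toeplitz characterization of regular matrices by unfolding the $\alpha$-transform of $Bx$. The crucial observation is that, for any $x=(x_k)$, substituting $(Bx)_n=\sum_k b_{nk}x_k$ into the definition of $\tau_n$ and using linearity yields
$$
\tau_n(Bx)=\frac{r_n(Bx)_n+s_{n-1}(Bx)_{n-1}}{\alpha_n}=\sum_k\frac{r_nb_{nk}+s_{n-1}b_{n-1,k}}{\alpha_n}\,x_k=\sum_k\tilde{b}_{nk}x_k=(\widetilde{T}Bx)_n,
$$
so that $\tau(Bx)$ is precisely the $\widetilde{T}B$-transform of $x$, where $\widetilde{T}B$ is the matrix with entries $\tilde{b}_{nk}$. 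Hence $Bx\in s_\alpha^{(c)}(\widetilde{B})$ if and only if $\widetilde{T}Bx\in c$, and the limit of $Bx$ in $s_\alpha^{(c)}(\widetilde{B})$ equals $\lim_n(\widetilde{T}Bx)_n$. It follows that $B\in(c:s_\alpha^{(c)}(\widetilde{B}))_{reg}$ if and only if $\widetilde{T}B\in(c:c)_{reg}$.

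Granting this reduction, the conclusion is immediate from the Silverman--Toeplitz theorem applied to $\widetilde{T}B$: a matrix lies in $(c:c)_{reg}$ exactly when it has uniformly bounded absolute row sums, each column tends to zero, and the row sums tend to $1$. These are precisely (\ref{4.1}), condition (\ref{4.2}) with all $\beta_k=0$, and (\ref{4.5}). In writing this out I would separate necessity from sufficiency. For necessity, the uniform bound (\ref{4.1}) is forced because $\widetilde{T}B$ must in particular map $c$ into $\ell_\infty$; testing the limit-preserving identity $\lim_n\tau_n(Bx)=\lim_k x_k$ on the coordinate sequences $x=e^{(j)}$ gives $\lim_n\tilde{b}_{nj}=0$, i.e.\ $\beta_j=0$ in (\ref{4.2}); and testing it on $x=e$ gives $\lim_n\sum_k\tilde{b}_{nk}=1$, which is (\ref{4.5}).

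For sufficiency I would use the standard decomposition $x=(\lim_k x_k)\,e+\big(x-(\lim_k x_k)e\big)$ of a convergent sequence into a constant part plus a null part: condition (\ref{4.5}) produces the correct limit for the constant part, while (\ref{4.1}) together with (\ref{4.2}) at $\beta_k=0$ sends the image of the null part to $0$. The one technical point, which is the genuine heart of the Silverman--Toeplitz argument, is justifying the interchange of $\lim_n$ with the infinite sum $\sum_k$; this is handled in the usual way by splitting the series at a large index, controlling the finitely many leading terms by column convergence and the tail uniformly in $n$ by the row bound (\ref{4.1}). Everything beyond the reduction is thus classical, and the substance of the proof is the transform identity that recasts the problem as regularity of the single matrix $\widetilde{T}B$.
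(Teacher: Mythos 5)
Your proof is correct and coincides with the paper's intended argument: the paper states Lemma \ref{l4.2} without proof, and the standard derivation is precisely your reduction, namely the identity $\tau_{n}(Bx)=\sum_{k}\tilde{b}_{nk}x_{k}$ showing that $B\in (c:s_{\alpha}^{(c)}(\widetilde{B}))_{reg}$ if and only if the matrix $(\tilde{b}_{nk})\in (c:c)_{reg}$, followed by the Silverman--Toeplitz theorem, whose three conditions are exactly (\ref{4.1}), (\ref{4.2}) with $\beta_{k}=0$, and (\ref{4.5}). The only point you gloss over, worth one line in a full write-up, is that each row of $B$ lies in $\ell_{1}$ (forced by $c^{\beta}=\ell_{1}$ in one direction, and recoverable inductively from (\ref{4.1}) via $b_{nk}=(\alpha_{n}\tilde{b}_{nk}-s_{n-1}b_{n-1,k})/r_{n}$ in the other), which is what licenses the term-by-term rearrangement in your transform identity.
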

\begin{lemma}\label{l4.3}
$B\in (st(A)\cap
\ell_{\infty}:s_{\alpha}^{(c)}(\widetilde{B}))_{reg}$ if and only if
$B\in (c:s_{\alpha}^{(c)}(\widetilde{B}))_{reg}$ and
\begin{equation}\label{4.6}
\lim_{n} \sum_{k\in E} |\tilde{b}_{nk}|=0
\end{equation}
for every $E\subset \mathbb{N}$ with $\delta_{A}(E)=0$.
\end{lemma}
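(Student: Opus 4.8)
The plan is to transfer everything to the auxiliary matrix $\tilde B=(\tilde b_{nk})$ and then invoke Lemma \ref{l4.2} for the convergent part while treating the statistically null part by a density estimate. The starting point is the identity
$$
(\widetilde T Bx)_{n}=\frac{r_{n}(Bx)_{n}+s_{n-1}(Bx)_{n-1}}{\alpha_{n}}=\sum_{k}\tilde b_{nk}x_{k}=(\tilde B x)_{n},
$$
valid for every $x\in\ell_{\infty}$ and $n\ge 1$. Consequently $B$ carries $x$ into $s_{\alpha}^{(c)}(\widetilde B)$ with $\widetilde T$-limit $L$ if and only if $\tilde B x\in c$ and $\lim_{n}(\tilde B x)_{n}=L$, so that $B\in(st(A)\cap\ell_{\infty}:s_{\alpha}^{(c)}(\widetilde B))_{reg}$ is equivalent to requiring $\lim_{n}(\tilde B x)_{n}=st_{A}\text{-}\lim x$ for every bounded $A$-statistically convergent $x$. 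I would work exclusively with $\tilde B$ from here on.

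For the necessity I would first restrict the hypothesis to $c$. Since $A$ is a nonnegative regular matrix every finite set is $A$-null, whence $c\subseteq st(A)\cap\ell_{\infty}$ and the two limits coincide on $c$; the restriction therefore gives $B\in(c:s_{\alpha}^{(c)}(\widetilde B))_{reg}$ immediately. To force \eqref{4.6}, I would argue by contradiction: assume some $E$ with $\delta_{A}(E)=0$ satisfies $\limsup_{n}\sum_{k\in E}|\tilde b_{nk}|>0$, and build a bounded sequence $x$ by a sliding-block construction. Selecting rows $n(1)<n(2)<\cdots$ along which this $\limsup$ is nearly attained, I would choose disjoint finite blocks $E_{r}\subset E$, each lying beyond the previous one, so that $E_{r}$ captures most of the mass $\sum_{k\in E}|\tilde b_{n(r)k}|$; putting $x_{k}=\overline{\tilde b_{n(r)k}}/|\tilde b_{n(r)k}|$ for those $k\in E_{r}$ with $\tilde b_{n(r)k}\ne 0$ (and $x_{k}=0$ otherwise), so that $\tilde b_{n(r)k}x_{k}=|\tilde b_{n(r)k}|$, yields $\|x\|_{\infty}\le 1$ with support in $E$, hence $st_{A}\text{-}\lim x=0$. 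For the chosen rows, the contribution of columns in earlier blocks is negligible by \eqref{4.2}, the contribution of columns in later blocks is negligible because each fixed row of $\tilde B$ is absolutely summable by \eqref{4.1}, and the block $E_{r}$ itself supplies the full positive mass; thus $(\tilde B x)_{n(r)}$ stays bounded away from $0$, contradicting $\tilde B x\to 0$.

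For the sufficiency I would assume $B\in(c:s_{\alpha}^{(c)}(\widetilde B))_{reg}$ together with \eqref{4.6}; by Lemma \ref{l4.2} this provides \eqref{4.1}, \eqref{4.2} with all $\beta_{k}=0$, and \eqref{4.5}. Given $x\in st(A)\cap\ell_{\infty}$ with $st_{A}\text{-}\lim x=l$, I would split $x=l\,e+y$ where $y=x-l\,e$ is bounded with $st_{A}\text{-}\lim y=0$. The constant part is handled by \eqref{4.5}, since $(\tilde B(l\,e))_{n}=l\sum_{k}\tilde b_{nk}\to l$. For $y$, fixing $\varepsilon>0$ and setting $E_{\varepsilon}=\{k:|y_{k}|\ge\varepsilon\}$ (so $\delta_{A}(E_{\varepsilon})=0$), I would write
$$
(\tilde B y)_{n}=\sum_{k\in E_{\varepsilon}}\tilde b_{nk}y_{k}+\sum_{k\notin E_{\varepsilon}}\tilde b_{nk}y_{k}
$$
and estimate the first sum by $\|y\|_{\infty}\sum_{k\in E_{\varepsilon}}|\tilde b_{nk}|$, which tends to $0$ by \eqref{4.6}, and the second by $\varepsilon\,\|B\|$ using \eqref{4.1}. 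Letting $n\to\infty$ and then $\varepsilon\to 0$ gives $\tilde B y\to 0$, hence $\tilde B x\to l$; this is precisely regularity of $B$ into $s_{\alpha}^{(c)}(\widetilde B)$, completing that direction.

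The main obstacle is the necessity, specifically making the sliding-block construction precise: at each stage one must simultaneously keep the new block inside the fixed $A$-null set $E$ and beyond all earlier ones, absorb almost all of the current row's mass on $E$ into a finite block, and leave a large enough gap after it so that no later block can spoil the value at the current row. Balancing these three tails — earlier columns controlled by \eqref{4.2}, the present block supplying a fixed positive amount, and later columns controlled by the convergence of the fixed row from \eqref{4.1} — is the delicate point; once it is set up the lower bound on $|(\tilde B x)_{n(r)}|$ is immediate. By contrast the sufficiency is a routine two-term estimate.
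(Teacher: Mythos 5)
Your proof is correct, and your sufficiency half is essentially identical to the paper's: the paper also writes $\sum_{k}\tilde{b}_{nk}x_{k}=\sum_{k}\tilde{b}_{nk}(x_{k}-l)+l\sum_{k}\tilde{b}_{nk}$, bounds the first term by $\|x\|\sum_{k\in E}|\tilde{b}_{nk}|+\varepsilon\|B\|$, and concludes with (\ref{4.5}) and (\ref{4.6}); your decomposition $x=le+y$ is the same estimate in different clothing. The genuine divergence is in the necessity of (\ref{4.6}). You prove it from scratch by a gliding-hump construction, which in effect re-derives the Schur-type fact already packaged in Lemma \ref{l4.1}. The paper argues more softly: given $E$ with $\delta_{A}(E)=0$ and $x\in\ell_{\infty}$, the truncated sequence $z$ ($z_{k}=x_{k}$ on $E$, $z_{k}=0$ off $E$) is $A$-statistically null, so by the regularity hypothesis $Bz$ has $\widetilde{T}$-limit $0$; since $\sum_{k}\tilde{b}_{nk}z_{k}=\sum_{k\in E}\tilde{b}_{nk}x_{k}$, the column-restricted matrix $D$ with $d_{nk}=\tilde{b}_{nk}$ for $k\in E$ and $d_{nk}=0$ otherwise lies in $(\ell_{\infty}:s_{\alpha}^{(c)}(\widetilde{B}))$, and condition (\ref{4.3}) of Lemma \ref{l4.1} applied to $D$ — with $\beta_{k}=0$ forced by testing on coordinate sequences — is exactly (\ref{4.6}). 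The paper's route costs two lines once Lemma \ref{l4.1} is in hand; yours is self-contained and exposes the mechanism, but it must balance the three tails you identify (earlier finite blocks via (\ref{4.2}) with $\beta_{k}=0$ from Lemma \ref{l4.2}, the current block carrying mass bounded below, the row tail via the absolute row summability in (\ref{4.1})), and that recursive bookkeeping, while routine, is precisely the work that citing Lemma \ref{l4.1} avoids. Both arguments are sound; note only that in your construction the choice of signs $x_{k}=\overline{\tilde{b}_{n(r)k}}/|\tilde{b}_{n(r)k}|$ uses that the entries may be complex, which is consistent with the paper's setting of $\ell_{\infty}(\mathbb{C})$ in this section.
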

\begin{proof}
Because of $c\subset st\cap \ell_{\infty}$, $B\in (c :
s_{\alpha}^{(c)}(\widetilde{B}))_{reg}$. Now, for any $x\in
\ell_{\infty}$ and a set $E\subset \mathbb{N}$ with $\delta(E)=0$,
let us define the sequence $z=(z_{k})$ by
$$
z_{k}=\left\{\begin{array}{cc}
  \displaystyle x_{k}, & k\in E \\
  0, & k \notin E.
\end{array}\right.
$$
Then, since $z\in st_{0}$, $Az\in s_{\alpha}^{(0)}(\widetilde{B})$,
where $s_{\alpha}^{(0)}(\widetilde{B})$ is the space of sequences
which the $\widetilde{T}-$ transforms of them in $c_{0}$. Also,
since
$$
\sum_{k}\tilde{b}_{nk}z_{k}= \sum_{k\in E}\tilde{b}_{nk}x_{k},
$$
the matrix $D=(d_{nk})$ defined by $d_{nk}=\tilde{b}_{nk} \quad
(k\in E)$ and $d_{nk}=0 \quad (k\notin E)$ is in the class
$(\ell_{\infty} : s_{\alpha}^{(c)}(\widetilde{B}))$. Hence, the
necessity of (\ref{4.6}) follows from Lemma \ref{l4.1}.

Conversely, let $x\in st(A)\cap \ell_{\infty}$ with $st_{A}-\lim
x=l$. Then, the set $E$ defined by $E=\{k: |x_{k}-l|\geq
\varepsilon\}$ has density zero and $|x_{k}-l|\leq \varepsilon$ if
$k\notin E$. Now, we can write

\begin{equation}\label{4.7}
\sum_{k}\tilde{b}_{nk}x_{k}=\sum_{k}\tilde{b}_{nk}(x_{k}-l)+l\sum_{k}\tilde{b}_{nk}.
\end{equation}
Since
$$
\left|\sum_{k}\tilde{b}_{nk}(x_{k}-l)\right|\leq \|x\|\sum_{k\in
E}|\tilde{b}_{nk}|+\varepsilon\cdot \|B\|,
$$
letting $n\rightarrow \infty$ in (\ref{4.7}) and using (\ref{4.5})
with (\ref{4.6}), we have
$$
\lim_{n} \sum_{k} \tilde{b}_{nk}x_{k}=l.
$$
This implies that $B\in (st(A)\cap \ell_{\infty} :
s_{\alpha}^{(c)}(\widetilde{B}))_{reg}$ and the proof is completed.
\end{proof}

Now, we may give some inclusion theorems. Firstly, we need a lemma.

\begin{lemma}\cite[Corollary 12]{ss}\label{l4.6}
Let $A=(a_{nk})$ be a matrix satisfying $\sum_{k} |a_{nk}|<\infty$
and $\lim_{n} a_{nk}=0$. Then, there exists an $y\in \ell_{\infty}$
with $\|y\|\leq 1$ such that
$$
\limsup_{n} \sum_{k} a_{nk}y_{k}= \limsup_{n} \sum_{k} |a_{nk}|.
$$
\end{lemma}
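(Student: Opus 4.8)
The plan is to prove this by a gliding-hump (sliding-hump) construction, the two hypotheses on $A$ serving as two independent pinching tools. Write $L=\limsup_n\sum_k|a_{nk}|$. One direction is free: whenever $\|y\|\le1$ we have $\big|\sum_k a_{nk}y_k\big|\le\sum_k|a_{nk}|$, so $\limsup_n\sum_k a_{nk}y_k\le L$; the whole content of the lemma is to manufacture a single $y$ for which this bound is attained. If $L=0$ then $\sum_k|a_{nk}|\to0$ and any bounded $y$ (say $y=0$) works, so I may assume $L>0$; the construction below in fact also covers $L=\infty$, reading the target value as arbitrarily large.

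First I would pass to a subsequence $(n_i)$ with $\sum_k|a_{n_i,k}|\to L$, and then build $y$ on successive disjoint blocks of the column index. The absolute summability $\sum_k|a_{n_i,k}|<\infty$ lets me pick a cutoff $k_i$ so that the tail $\sum_{k>k_i}|a_{n_i,k}|$ is below a prescribed $\varepsilon_i$; the columnwise nullity $\lim_n a_{nk}=0$ lets me pick $n_i$ so large that the head $\sum_{k\le k_{i-1}}|a_{n_i,k}|$ is below $\varepsilon_i$, this head being a \emph{finite} sum of null columns once $k_{i-1}$ is already fixed. Alternating these two selections inductively yields $n_1<n_2<\cdots$ and $0=k_0<k_1<k_2<\cdots$ such that each row $n_i$ concentrates all but $\varepsilon_i$ of its $\ell_1$-mass on the block $k_{i-1}<k\le k_i$. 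On that block I define $y_k$ by $|y_k|=1$ and $a_{n_i,k}y_k=|a_{n_i,k}|$ (and $y_k=0$ where $a_{n_i,k}=0$); this prescribes $y$ on all of $\mathbb{N}$ and plainly $\|y\|\le1$.

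Then I would split $\sum_k a_{n_i,k}y_k$ at $k_{i-1}$ and $k_i$. The middle block contributes exactly $\sum_{k_{i-1}<k\le k_i}|a_{n_i,k}|$, which differs from $\sum_k|a_{n_i,k}|$ by at most the head-plus-tail mass $2\varepsilon_i$; the head and tail pieces of the sum are themselves bounded in modulus by $\varepsilon_i$ each since $|y_k|\le1$. Hence $\big|\sum_k a_{n_i,k}y_k-\sum_k|a_{n_i,k}|\big|\le 4\varepsilon_i$, and with $\varepsilon_i\to0$ this forces $\sum_k a_{n_i,k}y_k\to L$. Therefore $\limsup_n\sum_k a_{nk}y_k\ge L$, and combined with the trivial upper bound the two limsups coincide.

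The main obstacle is purely the bookkeeping of the induction: the selections of $n_i$ and $k_i$ must be interleaved in the right order — $k_{i-1}$ first, then $n_i$ large relative to that fixed cutoff to kill the head, then $k_i$ large relative to the now-fixed row $n_i$ to kill the tail — and one must verify that the three-way split of each selected row leaves no uncontrolled cross terms. Once the blocks are in place, everything reduces to the triangle inequality.
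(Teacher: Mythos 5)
Your construction is correct and complete in all essentials. Note first that the paper does not prove this lemma at all: it imports it verbatim as Corollary 12 of Simons \cite{ss}, where it is obtained as a consequence of abstract results on sublinear functionals of the form $x\mapsto\limsup_n\sum_k a_{nk}x_k$ (Hahn--Banach/Banach-limit machinery), not by a direct construction. Your gliding-hump argument is the classical elementary alternative, and it checks out: the interleaving is in the right order (fix $k_{i-1}$, then use columnwise nullity of the \emph{finitely many} columns $k\le k_{i-1}$ to choose $n_i$ --- taken strictly increasing and from the subsequence realizing $L=\limsup_n\sum_k|a_{nk}|$ --- so the head mass is $<\varepsilon_i$, then use absolute summability of the now-fixed row $n_i$ to choose $k_i$ killing the tail); the blocks $(k_{i-1},k_i]$ exhaust $\mathbb{N}$ since $k_i$ is strictly increasing; and the four-way error estimate
$$
\Bigl|\sum_k a_{n_i,k}y_k-\sum_k|a_{n_i,k}|\Bigr|\le 4\varepsilon_i
$$
is right, since the middle block cancels exactly and each of the two head pieces and two tail pieces is $\le\varepsilon_i$. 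Your choice $|y_k|=1$ with $a_{n_i,k}y_k=|a_{n_i,k}|$ also handles complex entries, which matters since the paper applies the lemma in Section 4 to complex-valued sequences, and your observation that the argument degrades gracefully to $L=\infty$ is a genuine (small) extra, since the paper's hypotheses only give rowwise, not uniform, summability. What the citation route buys is generality (Simons' framework yields a family of such attainment results at once); what your route buys is a short self-contained proof from the two stated hypotheses alone, which is arguably preferable here since only this one instance is used, in Theorem \ref{t4.7}.
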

\begin{thm}\label{t4.7}
Let $B\in (c:s_{\alpha}^{(c)}(\widetilde{B}))_{reg}$. Then, $\alpha-
core(Bx)\subseteq \mathcal{K}-core(x)$  for all $x\in \ell_{\infty}$
if and only if
\begin{equation}\label{4.8}
\lim_{n} \sum_{k} |\tilde{b}_{nk}|=1.
\end{equation}
\end{thm}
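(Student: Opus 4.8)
The plan is to translate the core inclusion into a pointwise comparison of $\limsup$'s and then treat the two implications separately, using the representation $\tau_{n}(Bx)=\sum_{k}\tilde{b}_{nk}x_{k}$ together with the geometric descriptions $\alpha-core(Bx)=\bigcap_{z}G_{Bx}(z)$ and $\mathcal{K}-core(x)=\bigcap_{z}B_{x}(z)$. Since $\alpha-core(Bx)\subseteq \mathcal{K}-core(x)$ follows once $G_{Bx}(z)\subseteq B_{x}(z)$ holds for every $z\in\mathbb{C}$, the sufficiency reduces to the inequality $\limsup_{n}|\tau_{n}(Bx)-z|\leq \limsup_{k}|x_{k}-z|$ for all $z$ and all $x\in\ell_{\infty}$.

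For sufficiency, assume (\ref{4.8}). Fixing $z$ and $x$, I would write
$$\tau_{n}(Bx)-z=\sum_{k}\tilde{b}_{nk}(x_{k}-z)+z\Big(\sum_{k}\tilde{b}_{nk}-1\Big),$$
where the second term tends to $0$ by (\ref{4.5}) (valid since $B\in (c:s_{\alpha}^{(c)}(\widetilde{B}))_{reg}$, Lemma \ref{l4.2}). Writing $L=\limsup_{k}|x_{k}-z|$ and fixing $\varepsilon>0$, I split the first sum at an index beyond which $|x_{k}-z|<L+\varepsilon$; the finite head tends to $0$ because $\lim_{n}\tilde{b}_{nk}=0$ for each $k$ (Lemma \ref{l4.2}), while the tail is bounded by $(L+\varepsilon)\sum_{k}|\tilde{b}_{nk}|$. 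Taking $\limsup_{n}$ and invoking (\ref{4.8}) gives $\limsup_{n}|\tau_{n}(Bx)-z|\leq L+\varepsilon$; letting $\varepsilon\to 0$ yields the desired inequality and hence the inclusion.

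For necessity, assume the inclusion holds for every $x\in\ell_{\infty}$. The lower bound $\liminf_{n}\sum_{k}|\tilde{b}_{nk}|\geq 1$ is immediate from $|\sum_{k}\tilde{b}_{nk}|\leq \sum_{k}|\tilde{b}_{nk}|$ together with (\ref{4.5}). For the upper bound I would exploit that $(\tilde{b}_{nk})$ satisfies $\sum_{k}|\tilde{b}_{nk}|<\infty$ and $\lim_{n}\tilde{b}_{nk}=0$ (conditions (\ref{4.1}) and (\ref{4.2}) with $\beta_{k}=0$), so Lemma \ref{l4.6} produces a $y\in\ell_{\infty}$ with $\|y\|\leq 1$ along a subsequence of which $\tau_{n}(By)=\sum_{k}\tilde{b}_{nk}y_{k}\to M:=\limsup_{n}\sum_{k}|\tilde{b}_{nk}|$. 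Being a subsequential limit of $(\tau_{n}(By))$, the real number $M$ lies in every closed convex hull $H_{n}$, hence $M\in\alpha-core(By)$. The inclusion then forces $M\in\mathcal{K}-core(y)\subseteq B_{y}(0)=\{w:|w|\leq\limsup_{k}|y_{k}|\}\subseteq\{w:|w|\leq 1\}$, so $M\leq 1$. Combining the two bounds gives $\lim_{n}\sum_{k}|\tilde{b}_{nk}|=1$.

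The main obstacle is the necessity direction: extracting a single sequence $y$, independent of $n$, whose $\widetilde{B}$-weighted transform almost attains the absolute row sums. This is exactly what Lemma \ref{l4.6} supplies, and the only delicate point is the complex setting, where one must ensure that along the realizing subsequence $\sum_{k}\tilde{b}_{nk}y_{k}$ approaches the real nonnegative value $M$, so that $M$ is genuinely a subsequential limit of $(\tau_{n}(By))$ and therefore a point of $\alpha-core(By)$.
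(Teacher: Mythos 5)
Your proposal is correct and takes essentially the same route as the paper: sufficiency via the decomposition $\tau_{n}(Bx)-z=\sum_{k}\tilde{b}_{nk}(x_{k}-z)+z\big(\sum_{k}\tilde{b}_{nk}-1\big)$, the split of the sum at a tail index, and conditions (\ref{4.5}) and (\ref{4.8}); necessity via Simons' result (Lemma \ref{l4.6}) applied to $\widehat{B}=(\tilde{b}_{nk})$ together with $\mathcal{K}$-$core(y)\subseteq B_{1}(0)$. If anything, your write-up is slightly more complete than the paper's, since you state explicitly the lower bound $\liminf_{n}\sum_{k}|\tilde{b}_{nk}|\geq 1$ coming from (\ref{4.5}) and the reason why the value produced by Lemma \ref{l4.6} actually lies in $\alpha$-$core(By)$ (as a subsequential limit of $\tau_{n}(By)$), both of which the paper leaves implicit, and you correctly flag the same complex-valued subtlety in Lemma \ref{l4.6} that the paper also glosses over.
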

\begin{proof}
Since $B\in (c:s_{\alpha}^{(c)}(\widetilde{B}))_{reg}$, the matrix
$\widehat{B}=(\tilde{b}_{nk})$ is satisfy the conditions of Lemma
\ref{l4.6}. So, there exists a $y\in \ell_{\infty}$ with $\|y\|\leq
1$ such that

$$
\left \{\omega\in \mathbb{C}: |\omega|\leq
\limsup_{n}\sum_{k}\tilde{b}_{nk} y_{k} \right\}= \left \{\omega\in
\mathbb{C}: |\omega|\leq \limsup_{n}\sum_{k}|\tilde{b}_{nk}|
\right\}.
$$
On the other hand, since $\mathcal{K}-core(y)\subseteq B_{1}(0)$, by
the hypothesis
$$
\left\{\omega\in \mathbb{C}: |\omega|\leq
\limsup_{n}\sum_{k}|\tilde{b}_{nk}| \right\}\subseteq B_{1}(0)=\left
\{\omega\in \mathbb{C}: |\omega|\leq 1\right\}
$$
which implies (\ref{4.8}).\\

Conversely, let $\omega\in \alpha- core(Bx)$. Then, for any given
$z\in \mathbb{C}$, we can write
\begin{eqnarray}
|\omega -z|&\leq& \limsup_{n} |\tau_{n} (Bx)-z| \label{4.9}\\
&=&\limsup_{n} \left|z-\sum_{k}\tilde{b}_{nk}x_{k}\right|\nonumber\\
&\leq& \limsup_{n} \left|\sum_{k}\tilde{b}_{nk}(z-x_{k})\right|+
\limsup_{n}|z|\left|1-\sum_{k}\tilde{b}_{nk} \right| \nonumber\\
&=&\limsup_{n}
\left|\sum_{k}\tilde{b}_{nk}(z-x_{k})\right|.\nonumber
\end{eqnarray}
Now, let $\limsup_{k} |x_{k}-z|=l$. Then, for any $\varepsilon >0$,
$|x_{k}-z|\leq l+\varepsilon$ whenever $k\geq k_{0}$. Hence, one can
write that

\begin{eqnarray}
\bigg|\sum_{k}\tilde{b}_{nk}(z-x_{k})\bigg|&=&\left|\sum_{k<k_{0}}\tilde{b}_{nk}(z-x_{k})+\sum_{k\geq
k_{0}}\tilde{b}_{nk}(z-x_{k})\right| \label{4.10}\\
&\leq& \sup_{k} |z-x_{k}|
\sum_{k<k_{0}}|\tilde{b}_{nk}|+(l+\varepsilon) \sum_{k\geq
k_{0}}|\tilde{b}_{nk}|\nonumber\\
&\leq& \sup_{k} |z-x_{k}|
\sum_{k<k_{0}}|\tilde{b}_{nk}|+(l+\varepsilon)
\sum_{k}|\tilde{b}_{nk}|.\nonumber
\end{eqnarray}

 Therefore, applying $\limsup_{n}$ under
the light of the hypothesis and combining (\ref{4.9}) with
(\ref{4.10}), we have
$$
|\omega -z|\leq \limsup_{n}
\left|\sum_{k}\tilde{b}_{nk}(z-x_{k})\right|\leq l+\varepsilon
$$
which means that $\omega \in \mathcal{K}-core(x)$. This completes
the proof.
\end{proof}

\begin{thm}\label{t4.8}
Let $B\in (st(A)\cap \ell_{\infty} :
s_{\alpha}^{(c)}(\widetilde{B}))_{reg}$. Then, $\alpha-
core(Bx)\subseteq st_{A}-core(x)$ for all $x\in \ell_{\infty}$ if
and only if (\ref{4.8}) holds.
\end{thm}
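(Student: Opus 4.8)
The plan is to mirror the structure of the proof of Theorem \ref{t4.7}, since the statement differs only in replacing $\mathcal{K}-core(x)$ by $st_A-core(x)$ and the hypothesis $B\in(c:s_{\alpha}^{(c)}(\widetilde{B}))_{reg}$ by $B\in(st(A)\cap\ell_{\infty}:s_{\alpha}^{(c)}(\widetilde{B}))_{reg}$. For the necessity direction, I would argue that $st(A)\cap\ell_{\infty}\supseteq c$ forces $B\in(c:s_{\alpha}^{(c)}(\widetilde{B}))_{reg}$, so the matrix $\widehat{B}=(\tilde{b}_{nk})$ again satisfies the hypotheses of Lemma \ref{l4.6} (namely $\sum_k|\tilde{b}_{nk}|<\infty$ and $\lim_n\tilde{b}_{nk}=0$, which come from Lemma \ref{l4.2}). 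Thus there is a $y\in\ell_\infty$ with $\|y\|\le 1$ realizing $\limsup_n\sum_k\tilde{b}_{nk}y_k=\limsup_n\sum_k|\tilde{b}_{nk}|$. Since such a bounded $y$ is in particular statistically convergent along $A$ to some value inside the unit disc (its $st_A$-core lies in $B_1(0)$), the inclusion $\alpha-core(By)\subseteq st_A-core(y)\subseteq B_1(0)$ yields condition (\ref{4.8}) exactly as in the previous theorem.

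For the sufficiency direction I would take $\omega\in\alpha-core(Bx)$ and fix $z\in\mathbb{C}$, reproducing the chain (\ref{4.9}): using $\lim_n\sum_k\tilde{b}_{nk}=1$ from Lemma \ref{l4.2}, the term $|z||1-\sum_k\tilde{b}_{nk}|$ drops out and one is left with
$$
|\omega-z|\le\limsup_n\left|\sum_k\tilde{b}_{nk}(z-x_k)\right|.
$$
The essential difference from Theorem \ref{t4.7} is in how one bounds the right-hand side: instead of using the ordinary $\limsup$, I would set $l=st_A-\limsup_k|x_k-z|$ and define the exceptional set $E=\{k:|x_k-z|>l+\varepsilon\}$, which has $A$-density zero. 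Then I split the sum as $\sum_{k\in E}+\sum_{k\notin E}$, bounding the $E$-part by $\|z-x\|_\infty\sum_{k\in E}|\tilde{b}_{nk}|$ and the complementary part by $(l+\varepsilon)\sum_k|\tilde{b}_{nk}|$.

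The step I expect to be the main obstacle is controlling the sum over the density-zero set $E$: this is precisely where the stronger hypothesis is needed. Because $B\in(st(A)\cap\ell_{\infty}:s_{\alpha}^{(c)}(\widetilde{B}))_{reg}$, Lemma \ref{l4.3} supplies condition (\ref{4.6}), namely $\lim_n\sum_{k\in E}|\tilde{b}_{nk}|=0$ for every $A$-density-zero set $E$. Applying $\limsup_n$ and invoking (\ref{4.6}) for the $E$-part and (\ref{4.8}) for the remaining part, I obtain $|\omega-z|\le l+\varepsilon$, and letting $\varepsilon\to 0$ gives $|\omega-z|\le st_A-\limsup_k|x_k-z|$ for every $z$. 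By the characterization $st_A-core(x)=\bigcap_{z\in\mathbb{C}}\{w:|w-z|\le st_A-\limsup_k|x_k-z|\}$, this shows $\omega\in st_A-core(x)$, completing the proof. The delicate points are ensuring the $y$ produced by Lemma \ref{l4.6} genuinely has its $st_A$-core in $B_1(0)$ and verifying that the analogue of $B_x(z)$ for the statistical core admits the same intersection representation, both of which follow from the statistical core theory referenced earlier.
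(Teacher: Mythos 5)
Your proposal is correct and takes essentially the same approach as the paper: the sufficiency direction is the paper's argument verbatim (split the sum over the $A$-density-zero set $E=\{k:|x_{k}-z|>s+\varepsilon\}$, control the $E$-part by (\ref{4.6}) from Lemma \ref{l4.3} and the rest by (\ref{4.8}), then let $\varepsilon\to 0$). The only cosmetic difference is in necessity, where the paper just observes $st_{A}-core(x)\subseteq \mathcal{K}-core(x)$ and invokes Theorem \ref{t4.7}, whereas you rerun the Simons-lemma construction (Lemma \ref{l4.6}) directly with the statistical core --- an equivalent argument, since what you actually use is $st_{A}-core(y)\subseteq B_{1}(0)$ for $\|y\|\leq 1$ (not any claimed $st_{A}$-convergence of $y$, which need not hold).
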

\begin{proof}
Since $st_{A}-core(x)\subseteq \mathcal{K}-core(x)$ for any sequence
$x$ \cite{kd}, the necessity of the condition (\ref{4.8}) follows
from Theorem \ref{t4.7}.

Conversely, take $\omega \in \alpha-core(Bx)$. Then, we can write
again (\ref{4.9}). Now; if $st_{A}-\limsup |x_{k}-z|=s$, then for
any $\varepsilon>0$, the set $E$ defined by $E=\{k: |x_{k}-z|>
s+\varepsilon\}$ has density zero, (see \cite{kd}). Now, we can
write
\begin{eqnarray}
\bigg|\sum_{k}\tilde{b}_{nk}(z-x_{k})\bigg|&=&\left|\sum_{k\in
E}\tilde{b}_{nk}(z-x_{k})+\sum_{k\notin
E}\tilde{b}_{nk}(z-x_{k})\right|\nonumber\\
&\leq& \sup_{k} |z-x_{k}| \sum_{k\in
E}|\tilde{b}_{nk}|+(s+\varepsilon) \sum_{k\notin
E}|\tilde{b}_{nk}|\nonumber\\
&\leq& \sup_{k} |z-x_{k}| \sum_{k\in
E}|\tilde{b}_{nk}|+(s+\varepsilon)
\sum_{k}|\tilde{b}_{nk}|.\nonumber
\end{eqnarray}
Thus, applying the operator $\limsup_{n}$ and using the condition
(\ref{4.8}) with (\ref{4.6})  , we get that
\begin{equation}\label{4.11}
\limsup_{n}\left |\sum_{k}\tilde{b}_{nk}(z-x_{k})\right|\leq
s+\varepsilon.
\end{equation}
Finally, combining (\ref{4.9}) with (\ref{4.11}), we have
$$
|\omega-z|\leq st_{A}-\limsup_{k} |x_{k}-z|
$$
which means that $\omega \in st_{A}-core(x) $ and the proof is
completed.
\end{proof}

As a consequence of Theorem \ref{t4.8}, we have
\begin{cor}
Let $B\in (st\cap
\ell_{\infty}:s_{\alpha}^{(c)}(\widetilde{B}))_{reg}$. Then,
$\alpha- core(Bx)\subseteq st-core(x)$ for all $x\in \ell_{\infty}$
if and only if (\ref{4.8}) holds.
\end{cor}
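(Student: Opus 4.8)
The plan is to derive this statement as the special case $A = C$ of Theorem \ref{t4.8}, where $C = (c_{nk})$ is the Ces\`{a}ro mean of order one introduced in the Introduction. First I would observe that the natural density $\delta$ underlying ordinary statistical convergence is precisely the density induced by $C$. Indeed, for $E \subset \mathbb{N}$ one has $\lim_n \frac{1}{n+1} |\{0 \leq k \leq n : k \in E\}| = \lim_n (C\mathbf{1}_E)_n$, where $\mathbf{1}_E$ is the characteristic sequence of $E$; since $\frac{n}{n+1} \to 1$, this $C$-density coincides with $\delta(E) = \lim_n \frac{1}{n}|\{k \leq n : k \in E\}|$. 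Consequently, $A$-statistical convergence with $A = C$ is exactly ordinary statistical convergence, so that $st(C) = st$, $\delta_C = \delta$, and the $C$-statistical core $st_C\text{-}core$ reduces to the statistical core $st\text{-}core$.

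With this identification in hand, I would simply specialize Theorem \ref{t4.8} to $A = C$. The hypothesis $B \in (st(C) \cap \ell_{\infty} : s_{\alpha}^{(c)}(\widetilde{B}))_{reg}$ becomes the hypothesis $B \in (st \cap \ell_{\infty} : s_{\alpha}^{(c)}(\widetilde{B}))_{reg}$ of the corollary, and the conclusion $\alpha\text{-}core(Bx) \subseteq st_{C}\text{-}core(x)$ becomes $\alpha\text{-}core(Bx) \subseteq st\text{-}core(x)$. The equivalence with condition (\ref{4.8}) transfers verbatim, since (\ref{4.8}) is a statement about the entries $\tilde{b}_{nk}$ alone and does not involve the matrix $A$ at all.

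There is essentially no obstacle here; the only points worth a line of justification are that $C$ is a nonnegative regular triangle, so that the apparatus behind Lemma \ref{l4.3} and Theorem \ref{t4.8} is legitimately applicable with $A = C$, and that $\delta_C$ genuinely agrees with $\delta$. Both are standard, and with them the corollary follows at once from Theorem \ref{t4.8}.
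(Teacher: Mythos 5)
Your proposal is correct and follows essentially the same route as the paper, which states the corollary without further argument as an immediate consequence of Theorem \ref{t4.8}, i.e.\ precisely the specialization to the Ces\`{a}ro matrix $A=C$ under the standard identification $st(C)=st$ and $\delta_{C}=\delta$ that you spell out. Your added justification that $C$ is a nonnegative regular triangle, so the apparatus of Lemma \ref{l4.3} and Theorem \ref{t4.8} applies, is exactly the detail the paper leaves implicit.
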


\vskip 1truecm

\vskip 1truecm
\end{document}